\documentclass[a4paper,11pt]{amsart}
\usepackage[utf8]{inputenc}
\usepackage{lmodern}
\usepackage{latexsym}
\usepackage{a4wide}
\usepackage{amscd}
\usepackage{graphics}
\usepackage{amsmath}
\usepackage{amsthm}
\usepackage{amssymb}
\usepackage{bm}
\usepackage{mathrsfs}
\usepackage{enumerate}
\usepackage{pgf,tikz}
\usepackage{dsfont}
\usepackage{soul}
\usepackage{csquotes}

\usepackage{hyperref}
\hypersetup{
	linktocpage=true,
	colorlinks=true,
	linkcolor=blue!80!black,
	bookmarks=true,
	citecolor=orange!50!red,
	urlcolor=magenta!80!black,
}

\usepackage{cleveref}

\numberwithin{equation}{section}

\newtheorem{theorem}{Theorem}[section]

\newtheorem{lemma}[theorem]{Lemma}

\theoremstyle{definition}
\newtheorem{definition}[theorem]{Definition}
\newtheorem{example}[theorem]{Example}

\newtheorem{remark}[theorem]{Remark}

\newcommand{\R}{\mathbb{R}}	
\newcommand{\N}{\mathbb{N}} 
\newcommand{\dx}{\,\mathrm{d}x}	
\newcommand{\dxdy}{\,\mathrm{d}x \mathrm{d}y}
\newcommand{\dxdt}{\,\mathrm{d}x \mathrm{d}t}
\renewcommand{\d}{\,\mathrm{d}}
\newcommand{\norm}[1]{\left\lVert #1 \right\lVert}
\newcommand{\abs}[1]{\left| #1 \right|}

\newcommand{\sub}{\subseteq}

\DeclareMathOperator{\supp}{\mathrm{supp}}

\DeclareMathOperator{\dive}{\mathrm{div}}

\newenvironment{bvp}{\left\{\begin{aligned}  }{\end{aligned}\right.}

\newcommand{\Ds}{\mathcal{D}^{s,2}(\R^N)} 
\newcommand{\Dext}{\mathcal{D}^{1,2}_t(\overline{\R^{N+1}_+})} 

\newcommand{\weak}{\rightharpoonup}
 
\renewcommand{\>}{\right\rangle}
\newcommand{\LA}{\bm{\lambda},\bm{a}}

\DeclareMathOperator{\Tr}{\mathrm{Tr}} 

\title{On fractional critical problems with multi-polar Hardy potentials}

\usepackage{amsaddr}

\author[E. Mainini, D. Mukherjee, R. Ognibene]{%
	Edoardo Mainini\textsuperscript{1},
	Debangana Mukherjee\textsuperscript{2},
	Roberto Ognibene\textsuperscript{3}%
}

\address{\textsuperscript{1}Dipartimento di Ingegneria meccanica, 
	energetica, gestionale e dei trasporti \\
	Universit\`a degli studi di Genova \\
	Via all'Opera Pia, 15, 16145 Genova, Italy}
\email{edoardo.mainini@unige.it} 

\address{\textsuperscript{2}Department of Mathematics, Krea University \\
	School of Interwoven Arts and Sciences (SIAS) \\
	 Sricity, Andhra Pradesh-517646, India}
\email{debangana.mukherjee@krea.edu.in}

\address{\textsuperscript{3}Dipartimento di Matematica e Applicazioni \\
	Universit\`a degli studi di Milano-Bicocca \\
	Via Roberto Cozzi, 55, 20126 Milano, Italy}
\email{roberto.ognibene@unimib.it}

\begin{document}
	
	\subjclass{
		35J20, 
		35J61, 
		35R11
		}

	\begin{abstract}
		We investigate the existence of positive solutions to fractional equations presenting a double criticality: a multi-polar Hardy-type potential and a Sobolev critical nonlinearity. The nonlocal nature of the operator and the absence of explicit ground states for the single-pole equation stand as major difficulties. We overcome these obstacles by passing to an extended formulation of the problem and by establishing sharp asymptotic estimates for the solutions in the case of a single pole. Then, through a concentration-compactness argument, we show that the existence of minimizers is dictated by the magnitude of the masses and the mutual distances between the corresponding poles.
	\end{abstract}
	
\maketitle

\section{Introduction}
Let $s\in(0,1)$ and $N>2s$. Let us consider $k\geq 1$ real numbers $\bm{\lambda}:=(\lambda_1,\dots,\lambda_k)\in\R^k$ such that $\lambda_1\leq \cdots\leq \lambda_k$  and $k$ points $\bm{a}:=(a_1,\dots,a_k) \in \R^{kN}$ such that $a_i\neq a_j$ for all $i\neq j$. In the present paper, we consider the following linear operator
\begin{equation}\label{eq:frac_oper}
	\mathcal{L}_{\LA}:=(-\Delta)^s-H_{\LA}\qquad\text{in }\R^N,
\end{equation}
which is a Schr\"odinger operator governed by the fractional Laplacian of order $s$
and with a multi-polar Hardy potential
\begin{equation}\label{eq:H_LA}
	H_{\LA}(x):=\sum_{i=1}^k\frac{\lambda_i}{\abs{x-a_i}^{2s}},
\end{equation}
and we are interested in finding nontrivial solutions to the following:
\begin{equation}\label{eq:problem}
		\begin{bvp}
			\mathcal{L}_{\LA}u&=u^{2^*_s-1}, &&\text{in }\R^{N}, \\
			u&\geq 0, &&\text{in }\R^N,
		\end{bvp}
\end{equation}
where $2^*_s=\frac{2N}{N-2s}$ is the fractional Sobolev critical exponent.
In order to rigorously state the object of investigation and the main results, we first need to introduce the functional setting and some notation. 
We define the space $\Ds$ as the completion of $C_c^\infty(\R^N)$ with respect to the norm
\begin{equation*}
	\norm{u}_{\Ds}:=\left(\frac{C_{N,s}}{2}\int_{\R^{2N}}\frac{\abs{u(x)-u(y)}^2}{\abs{x-y}^{N+2s}}\dxdy\right)^{1/2},
\end{equation*}
which is induced by the following scalar product
\begin{equation*}
	(u,v)_{\Ds}:=\frac{C_{N,s}}{2}\int_{\R^{2N}}\frac{(u(x)-u(y))(v(x)-v(y))}{\abs{x-y}^{N+2s}}\dxdy,
\end{equation*}
with
\[
	C_{N,s}:=\frac{2^{2s}}{\pi^{\frac{N}{2}}}\frac{\Gamma\left(\frac{N+2s}{2}\right)}{\Gamma(2-s)}s(1-s)>0.
\]
In fact, we have that the fractional Laplacian acts as follows
\begin{align*}
	&(-\Delta)^s\colon \mathcal{D}^{s,2}(\R^N)\to ((\mathcal{D}^{s,2}(\R^N))^* \\
	&_{(\mathcal{D}^{s,2}(\R^N))^*}\langle(-\Delta)^su,v\rangle_{\mathcal{D}^{s,2}(\R^N)}=(u,v)_{\mathcal{D}^{s,2}(\R^N)}\quad\text{for all }u,v\in\mathcal{D}^{s,2}(\R^N).
\end{align*}
We also denote by
\begin{align*}
	Q_{\LA}(u,v):=(u,v)_{\Ds}-\int_{\R^N}H_{\LA}uv\dx,\qquad
	Q_{\LA}(u):=Q_{\LA}(u,u),
\end{align*}
for all $u,v\in\Ds$. With this in mind, we say that $u\in L^1(\R^N;(1+|x|^{N+2s})^{-1}\dx)$ is a \emph{weak solution} of \eqref{eq:problem} if
\begin{equation}\label{eq:weak}
\begin{bvp}
		&u\in\Ds \\
		&u\geq 0\quad\text{a.e. in }\R^N \\
		&Q_{\LA}(u,v)=\int_{\R^N}u^{2^*_s-1}v\dx\quad\text{for all }v\in\Ds.
\end{bvp}
\end{equation}
In the present paper, we seek for solutions of \eqref{eq:problem} which are (multiple of) minimizers of the corresponding Rayleigh quotient, i.e. which achieve
\begin{equation}\label{eq:min_S_LA}
	S_{\LA}:=\inf_{u\in\Ds\setminus\{0\}}\frac{Q_{\LA}(u)}{\norm{u}_{L^{2^*_s}(\R^N)}^2}.
\end{equation}
Indeed, one can see that if $u\in\Ds$ is a minimum point attaining $S_{\LA}$ and satisfying $\norm{u}_{L^{2^*_s}(\R^N)}=1$, then $(S_{\LA})^{\frac{1}{2-2^*_s}}\,u$ solves the corresponding Euler-Lagrange equation \eqref{eq:problem}, in the sense of \eqref{eq:weak}.
In particular, the main goal of the present paper is to find sufficient conditions on the \emph{masses} $\{\lambda_i\}_{i=1,\dots,k}$ and on the \emph{poles} $\{a_i\}_{i=1,\dots,k}$ which guarantee  existence (or non-existence) of minimizers for \eqref{eq:min_S_LA}. Due to the presence of the critical Hardy-type potential, it turns out that a critical threshold in both cases (existence or non-existence) is played by the best constant $h_{N,s}>0$ in the fractional Hardy inequality
\begin{equation*}
	h_{N,s}\int_{\R^N}\frac{u^2}{|x|^{2s}}\dx\leq \norm{u}_{\Ds}^2\quad\text{for all }u\in \Ds,
\end{equation*}
proved in \cite{Herbst1977}. Moreover, also the following coercivity parameter
\begin{equation}\label{eq:mu}
	\mu_{\LA}:=\inf_{u\in\Ds\setminus\{0\}}\frac{Q_{\LA}(u)}{\norm{u}_{\Ds}^2}
\end{equation}
plays a crucial role, being $0$ a critical threshold in the existence or non-existence of solutions. Finally, we need the following last ingredient in order to state our first main result. Let us consider the problem with a single pole and with $\lambda\in[0,h_{N,s})$:
\begin{equation}\label{eq:one_pole_pbm_i}
	\begin{bvp}
		(-\Delta)^s u-\frac{\lambda}{|x|^{2s}}&=u^{2^*_s-1},&&\text{in }\R^N \\
		u&>0,&&\text{in }\R^N\setminus\{0\}.
	\end{bvp}
\end{equation}
By \cite[Theorem 1.1]{Fall2014} it is known that there exists $\alpha_\lambda\in\big(0,\frac{N-2s}{2}\big]$ such that  any solution $u\in \Ds$ to \eqref{eq:one_pole_pbm_i} satisfies
\begin{equation*}
	|u(x)|\approx C|x|^{\alpha_{\lambda}-\frac{N-2s}{2}}\quad\text{as }|x|\to 0,
\end{equation*}
for some $C>0$ which, in particular, implies that
\begin{equation*}
	\frac{C}{2}|x|^{\alpha_{\lambda}-\frac{N-2s}{2}}\leq u(x)\leq \frac{3}{2}C|x|^{\alpha_{\lambda}-\frac{N-2s}{2}}\quad\text{in a neighborhood of }0.
\end{equation*}
This parameter $\alpha_\lambda$ will play a crucial role in our main result, which we can now state (we refer to \Cref{sec:single} for the details, see in particular \eqref{eq:alpha} for the precise definition).
\begin{theorem}\label{thm:main}
	For $k\geq 2$, let $\bm{\lambda}=(\lambda_1,\dots,\lambda_k)\in\R^k$ and $\bm{a}=(a_1,\dots,a_k)\in\R^{kN}$ be such that $Q_{\LA}$ is positive definite, i.e.
	\begin{equation}\label{eq:positivity}
		\mu_{\LA}>0
	\end{equation}
	where $\mu_{\LA}$ is as in \eqref{eq:mu}, and such that
	\begin{align}
		&\lambda_1\leq \dots\leq \lambda_k<h_{N,s}, \label{eq:main_order}\\
		& \lambda_k>0, \label{eq:main_lambda_k}\\
		&\sum_{i<k}\lambda_i\leq 0,\label{eq:main_sum} \\
		&\left\{ \begin{aligned}
			&\sum_{i< k}\frac{\lambda_i}{|a_k-a_i|^{2\alpha_{\lambda_k}}}>0, &&\text{if }s>\alpha_{\lambda_k}, \\
			&\sum_{i< k}\frac{\lambda_i}{|a_k-a_i|^{2s}}>0, &&\text{if }s\leq \alpha_{\lambda_k},
		\end{aligned}\right.\label{eq:main_cases}
	\end{align}
	where $\alpha_{\lambda_k}$ is defined in \eqref{eq:alpha} (see also the discussion above).
	Then there exists a minimizer for \eqref{eq:min_S_LA} such that
	\begin{enumerate}
		\item $u$ and $\nabla u$ are locally H\"older continuous in $\R^N\setminus\{a_1,\dots,a_k\}$;
		\item $u>0$ in $\R^N\setminus\{a_1,\dots,a_k\}$.
	\end{enumerate}
\end{theorem}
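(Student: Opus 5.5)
The plan follows the classical concentration-compactness strategy (in the spirit of Felli--Terracini for the local case), transferred to the Caffarelli--Silvestre extension so that the nonlocal quadratic form becomes local. The first step is to show that if $S_{\LA}$ lies strictly below every level at which compactness can be lost, then every minimizing sequence is relatively compact up to dilations. For $Q_{\LA}$ positive definite, the relevant levels are:
\begin{enumerate}
\item $S$, the fractional Sobolev constant, for concentration at a point different from the poles or vanishing/escape to infinity. At infinity the potential looks like the single pole $\sum_i\lambda_i/|x|^{2s}$, with level $S_{\sum\lambda_i}\geq S$ because of \eqref{eq:main_sum} and \eqref{eq:main_order}; indeed $\sum_i\lambda_i\le\lambda_k$, and one has to check that the relevant comparison is with $\min\{S,S_{\sum\lambda_i}\}$.
\item $S_{\lambda_i}$, the single-pole constant with mass $\lambda_i$, for concentration at $a_i$.
\end{enumerate}
Since $S_\lambda$ is nonincreasing in $\lambda$ and $S_\lambda=S$ for $\lambda\le0$, the threshold is $\min\{S,S_{\lambda_k}\}=S_{\lambda_k}$ by \eqref{eq:main_lambda_k}. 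So the compactness statement to prove is: if $S_{\LA}<S_{\lambda_k}$, then $S_{\LA}$ is attained. I would prove it by applying Lions' lemma to the extended functions. A minimizing sequence $U_n$ in $\Dext$ with traces normalized in $L^{2^*_s}$ gives defect measures: Dirac masses on the boundary for the traces, and gradient measures for $t^{1-2s}|\nabla U_n|^2$. I then localize with cut-offs near each $a_i$, near other points, and near infinity. Using the fractional Hardy inequality for the pole masses and the Sobolev inequality for the others, each atom $\nu_j$ satisfies $\mu_j\ge S_{\lambda}\nu_j^{2/2^*_s}$ with the appropriate $\lambda$. Strict concavity of $t\mapsto t^{2/2^*_s}$ then forces a single piece with full mass, and the strict inequality rules out atoms. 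Hence the weak limit is a minimizer.

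The core step is the strict inequality $S_{\LA}<S_{\lambda_k}$. Take a minimizer $w$ of $S_{\lambda_k}$ (existence for a single pole with $0<\lambda_k<h_{N,s}$ is known). Test with $w_\varepsilon(x)=\varepsilon^{-(N-2s)/2}w((x-a_k)/\varepsilon)$, without any cut-off, since $w\in\Ds$ globally. Then
\begin{equation*}
Q_{\LA}(w_\varepsilon)=S_{\lambda_k}\norm{w}^2_{L^{2^*_s}}-\sum_{i<k}\lambda_i\int_{\R^N}\frac{w_\varepsilon^2}{|x-a_i|^{2s}}\dx ,
\end{equation*}
and it suffices to show that the last sum is positive for small $\varepsilon$. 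This is where the sharp asymptotics of $w$ at infinity enter; they are the results of \Cref{sec:single}. By Kelvin-type symmetry, $w(x)\sim c|x|^{-(N-2s)/2-\alpha_{\lambda_k}}$ as $|x|\to\infty$, together with matching upper and lower bounds. Changing variables,
\begin{equation*}
\int\frac{w_\varepsilon^2}{|x-a_i|^{2s}}=\varepsilon^{2s}\int\frac{w(y)^2}{|\varepsilon y+a_k-a_i|^{2s}}\,dy .
\end{equation*}
If $s<\alpha_{\lambda_k}$, the tail $w^2\sim|y|^{-N+2s-2\alpha}$ makes the contributions near $y=(a_i-a_k)/\varepsilon$ negligible. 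The leading term is then $\varepsilon^{2s}|a_k-a_i|^{-2s}\norm{w}_{L^2}^2$, and the sign is governed by $\sum_{i<k}\lambda_i|a_k-a_i|^{-2s}$. The case $s=\alpha_{\lambda_k}$ produces a logarithm with the same sign structure. If $s>\alpha_{\lambda_k}$, then $w\notin L^2$ and the dominant contribution comes from the region $|y|\sim1/\varepsilon$. Substituting $y=z/\varepsilon$ gives leading order
\begin{equation*}
\varepsilon^{2\alpha}c^2\int\frac{|z|^{-N+2s-2\alpha}}{|z+a_k-a_i|^{2s}}\,dz ,
\end{equation*}
and this integral equals $C|a_k-a_i|^{-2\alpha}$ by scaling, with a constant independent of $i$. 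Condition \eqref{eq:main_cases} therefore gives positivity in each case. Negative $\lambda_i$ contribute terms of the same order, which is why only the weighted sum matters.

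Finally, the regularity and positivity statements are handled as follows. Replacing the minimizer $u$ by $|u|$ does not increase the Rayleigh quotient, so it can be assumed nonnegative. After multiplication by a constant it solves \eqref{eq:problem}. Away from the poles the potential is bounded and smooth, so Moser iteration on the extension gives local boundedness. Then the regularity theory for degenerate elliptic equations with $A_2$ weights (Jin--Li--Xiong, or Silvestre's estimates) yields $C^{1,\beta}_{\rm loc}$ of $u$ on $\R^N\setminus\{a_i\}$. The strong maximum principle for the fractional Laplacian, or the Hopf lemma on the extension, gives $u>0$, because $(-\Delta)^su\ge -H^-u$ locally. I expect the main obstacle to be the energy estimate of the previous paragraph, in particular getting the exact first-order coefficient in the case $s>\alpha_{\lambda_k}$. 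This requires the precise decay asymptotics of $w$ at infinity, not just two-sided bounds, together with uniform control of the error terms near the other poles.
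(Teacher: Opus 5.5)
Your proposal is correct, and its core is the paper's. The strict inequality $S_{\LA}<S_{\lambda_k}$ is proved exactly as in \Cref{lemma:interaction_conditions}: you test with the rescaled single-pole solution centred at $a_k$ and use the three-regime asymptotics of \Cref{lemma:phi-est}. As you anticipated, these rely on the exact limits in \Cref{lemma:asy}, not only on two-sided bounds.

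The difference lies in the compactness step. The paper never treats a minimizing sequence directly. It uses Ekeland's principle (\Cref{lemma:ex_ps}) to get a Palais--Smale sequence at level $sS_{\LA}/N$. It then proves boundedness from $\mu_{\LA}>0$, tightness by an analysis at infinity with threshold $S_{\sigma_{\bm{\lambda}}}$ (\Cref{lemma:tight}), and compactness as long as $S_{\LA}<\min\{S,S_{\lambda_1},\dots,S_{\lambda_k},S_{\sigma_{\bm{\lambda}}}\}$ (\Cref{lemma:PS_c}). There, the \emph{upper} bounds on the defect masses, e.g. $\mu_{x_j}\le\kappa_s S_{\LA}\nu_{x_j}$, come from testing $J'(u_n)$ against $\eta_\epsilon u_n$.

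Your Lions-type argument uses only \emph{lower} bounds:
\begin{itemize}
\item $\mu_{x_j}\ge\kappa_s S\nu_{x_j}^{2/2^*_s}$;
\item $\mu_{a_i}-\kappa_s\lambda_i\gamma_{a_i}\ge\kappa_s S_{\lambda_i}\nu_{a_i}^{2/2^*_s}$, which comes from the definition of $S_{\lambda_i}$, not from Hardy's inequality alone;
\item $\mu_\infty-\kappa_s\sigma_{\bm{\lambda}}\gamma_\infty\ge\kappa_s S_{\sigma_{\bm{\lambda}}}\nu_\infty^{2/2^*_s}$.
\end{itemize}
You combine these with the splitting of $\lim_n Q_{\LA}(u_n)$ along the defect measures and the strict concavity of $t\mapsto t^{2/2^*_s}$. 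The fact $S_{\LA}\ge\mu_{\LA}S>0$ then makes the contradiction strict. This route is shorter and dispenses with Ekeland and the derivative tests. The paper's route, in exchange, yields a Palais--Smale compactness statement that can be used at non-minimal levels.

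Two small corrections.
\begin{itemize}
\item The claim $S_{\sigma_{\bm{\lambda}}}\ge S$ is false in general. The sum $\sigma_{\bm{\lambda}}$ may be positive, as in the three-pole example, and then $S_{\sigma_{\bm{\lambda}}}<S$. What holds, and all you need, is $S_{\sigma_{\bm{\lambda}}}\ge S_{\lambda_k}$, which follows from $\sigma_{\bm{\lambda}}\le\lambda_k$.
\item Drop ``up to dilations''. The fixed poles break dilation invariance, and your argument gives convergence to a minimizer directly.
\end{itemize}
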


Let us make some comments about assumption \eqref{eq:positivity} which, compared to \eqref{eq:main_order}, \eqref{eq:main_lambda_k}, \eqref{eq:main_sum}, \eqref{eq:main_cases}, is more implicit. A first observation in this direction is that a necessary condition on the masses and on the poles for the existence of solutions of \eqref{eq:problem} is the quadratic form $Q_{\LA}$ to be nonnegative. 
\begin{lemma}\label{lemma:Q_nonnegative}
	Let $\bm{\lambda}=(\lambda_1,\dots,\lambda_k)\in\R^k$ and $\bm{a}=(a_1,\dots,a_k)\in\R^{kN}$ be such that there exists a weak solution $u\in\Ds\setminus\{0\}$ of \eqref{eq:problem}. Then $Q_{\LA}(v)\geq 0$ for all $v\in\Ds$. Hence, if $\mu_{\LA}<0$ for some $\bm{\lambda}=(\lambda_1,\dots,\lambda_k)\in\R^k$ and $\bm{a}=(a_1,\dots,a_k)\in\R^{kN}$ (with $\mu_{\LA}$ as in \eqref{eq:mu}), then \eqref{eq:problem} does not admit solutions.
\end{lemma}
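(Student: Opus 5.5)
The idea is a ground-state (Picone/Allegretto–Huang type) argument: a nonnegative nontrivial weak solution $u$ acts as a positive supersolution of $\mathcal{L}_{\LA}w=0$, and this forces $Q_{\LA}\geq 0$. The plan has three steps.

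\emph{Step 1: positivity of $u$.} We first show $u>0$ a.e. in $\R^N$, or at least that $u$ is bounded below by a positive constant on compact sets away from the poles. The equation gives $(-\Delta)^s u = H_{\LA}u+u^{2^*_s-1}$. Near each pole with $\lambda_i<0$ the right-hand side may be negative, so the strong maximum principle for the fractional Laplacian does not apply verbatim. We therefore argue locally on $\R^N\setminus\{a_1,\dots,a_k\}$, where $H_{\LA}$ is bounded. There $u$ satisfies $(-\Delta)^s u + c(x)u\geq 0$ with $c=H_{\LA}^-$ locally bounded. A weak Harnack inequality for nonnegative supersolutions of the fractional Laplacian, which is nonlocal and uses that $u\geq0$ globally, then gives $u>0$ a.e. The only alternative is $u\equiv0$, which is excluded.

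\emph{Step 2: Picone-type inequality.} Take $\varphi\in C_c^\infty(\R^N\setminus\{a_1,\dots,a_k\})$ and set $u_\varepsilon:=u+\varepsilon$. We use the discrete Picone inequality
\[
(u_\varepsilon(x)-u_\varepsilon(y))\Big(\tfrac{\varphi^2(x)}{u_\varepsilon(x)}-\tfrac{\varphi^2(y)}{u_\varepsilon(y)}\Big)\leq (\varphi(x)-\varphi(y))^2 .
\]
It is elementary and follows from $(a-b)(\tfrac{p^2}{a}-\tfrac{q^2}{b})\leq (p-q)^2$ for $a,b>0$. The test function $v=\varphi^2/u_\varepsilon$ lies in $\Ds$, because $u_\varepsilon\geq\varepsilon$ and $\varphi$ is smooth with compact support. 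Plugging it into the weak formulation \eqref{eq:weak} and applying Picone yields
\[
\norm{\varphi}_{\Ds}^2\geq (u,\varphi^2/u_\varepsilon)_{\Ds}=\int_{\R^N}\big(H_{\LA}u+u^{2^*_s-1}\big)\frac{\varphi^2}{u+\varepsilon}\dx\geq \int_{\R^N}H_{\LA}\frac{u}{u+\varepsilon}\varphi^2\dx .
\]
Since $\varphi$ is supported away from the poles, $H_{\LA}$ is bounded on $\supp\varphi$. Letting $\varepsilon\to0$ by dominated convergence, and using $u>0$ a.e. from Step 1, we get $Q_{\LA}(\varphi)\geq0$.

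\emph{Step 3: density and conclusion.} We show $C_c^\infty(\R^N\setminus\{a_1,\dots,a_k\})$ is dense in $\Ds$. This holds because points have zero fractional capacity when $N>2s$, or concretely via cutoffs $1-\eta(|x-a_i|/r)$ with $\norm{\cdot}_{\Ds}$-cost $\sim r^{N-2s}\to0$. The map $v\mapsto Q_{\LA}(v)$ is continuous on $\Ds$: by the fractional Hardy inequality applied at each pole, $\int H_{\LA}v^2\leq (\sum|\lambda_i|/h_{N,s})\norm{v}^2_{\Ds}$. Hence $Q_{\LA}\geq0$ on all of $\Ds$. The second statement is the contrapositive: if $\mu_{\LA}<0$, some $v$ has $Q_{\LA}(v)<0$, so no nontrivial weak solution exists.

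The main obstacle is Step 1. Positivity for nonlocal supersolutions with only $u\geq0$ and a singular, sign-changing potential requires care. If it proves delicate, we can bypass it in Step 2: on the set $\{u=0\}$ the factor $u/(u+\varepsilon)$ vanishes, so we only obtain $\int_{\{u>0\}}H_{\LA}\varphi^2$. In that case we instead prove $|\{u=0\}|=0$ directly. If $u=0$ on a set $E$ of positive measure, testing the equation with nonnegative $v$ supported in $E$, or computing $(-\Delta)^s u$ pointwise at density points of $E$, gives $(-\Delta)^s u<0$ there. This contradicts $(-\Delta)^s u = H_{\LA}u+u^{2^*_s-1}=0$ on $E$ unless $u\equiv0$.
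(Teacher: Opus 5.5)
Your proposal is correct and is essentially the paper's argument: test \eqref{eq:weak} with $\varphi^2/(u+\varepsilon)$ for $\varphi\in C_c^\infty(\R^N\setminus\{a_1,\dots,a_k\})$, bound the Gagliardo term by $\norm{\varphi}_{\Ds}^2$ via the discrete Picone inequality, and let $\varepsilon\to0$ by dominated convergence. Your Steps 1 and 3 supply details the paper leaves tacit: $u>0$ a.e., which is what makes $u/(u+\varepsilon)\to1$, and the density of $C_c^\infty(\R^N\setminus\{a_1,\dots,a_k\})$ in $\Ds$ together with the Hardy-continuity of $Q_{\LA}$. The weak Harnack/strong maximum principle route you give for Step 1 is enough, so the fallback in your last paragraph is not needed; it is not rigorous as stated for a general measurable set $E$.
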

The result in \Cref{lemma:Q_nonnegative} is a standard phenomenon that goes under the name of \emph{Agmon-Allegretto-Piepenbrink principle} and it is a straightforward consequence of the discrete Picone inequality, see e.g. \cite{Frank2008,brasco}. For the sake of completeness we provide a proof of it at the end of \Cref{sec:conclusion}. 

Second, we point out that actually there are explicit conditions that guarantee the positivity/negativity of the quadratic form $Q_{\LA}$. In particular, we recall from \cite{Felli-Roberto} the two following results in this direction. The first one provides a necessary and sufficient condition on the masses $\{\lambda_i\}_{i=1,\dots,k}$ for the existence of a configuration of poles for which \eqref{eq:positivity} holds.
\begin{theorem}[\cite{Felli-Roberto}, Theorem 1.4]\label{thm:FMO1}
	Let $\bm{\lambda}=(\lambda_1, \dots \lambda_k) \in \R^k$. We have the following:
	\begin{itemize}
		\item if $\mu_{\LA}>0$ for some $\bm{a}=(a_1,\dots,a_k)\in\R^{kN}$, then
			\begin{equation*}
			\lambda_i<h_{N,s} \quad\text{for all }i=1,\dots, k, \quad\text{and}\quad \sum_{i=1}^{k}\lambda_i<h_{N,s}.
		\end{equation*}
		\item if there holds
		\begin{equation*}
			\lambda_i<h_{N,s} \quad\text{for all }i=1,\dots, k, \quad\text{and}\quad \sum_{i=1}^{k}\lambda_i<h_{N,s}
		\end{equation*}
		then there exists $\bm{a}=(a_1,\dots,a_k)\in\R^{kN}$ such that $\mu_{\LA}>0$.
	\end{itemize}
\end{theorem}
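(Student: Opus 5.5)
The plan is to prove the two implications separately: a test-function argument for necessity, and a separation-of-poles argument for sufficiency. Both rest only on the Hardy inequality and its sharpness, together with localization by cut-offs.

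\emph{Necessity.} Assume $\mu_{\LA}>0$ for some configuration $\bm a$. To get $\lambda_i<h_{N,s}$, concentrate test functions at $a_i$. Since $h_{N,s}$ is sharp and the problem is invariant under dilations, there are $\varphi_n\in C_c^\infty(\R^N)$ with $\norm{\varphi_n}_{\Ds}=1$ whose supports shrink to $0$ and with
\[
\int\frac{\varphi_n^2}{|x|^{2s}}\dx\to h_{N,s}^{-1}.
\]
Put $u_n=\varphi_n(\cdot-a_i)$. The other poles contribute $\int\lambda_j|x-a_j|^{-2s}u_n^2\dx\to0$: near $a_i$ the weight is bounded and $\norm{u_n}_{L^2}\to0$ by scaling. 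Hence
\[
\mu_{\LA}\le Q_{\LA}(u_n)\to 1-\lambda_i/h_{N,s}.
\]
This only gives $\lambda_i\le h_{N,s}$. For strict inequality, note that $\mu_{\LA}>0$ gives
\[
Q_{\LA}(u)\ge\mu_{\LA}\norm{u}^2_{\Ds},
\]
so $\lambda_i(1-\mu_{\LA})^{-1}$ still satisfies a localized Hardy inequality near $a_i$. Rerunning the concentration argument then yields $\lambda_i\le(1-\mu_{\LA})h_{N,s}<h_{N,s}$. The same idea with dilations toward infinity, $u_n=\varphi(\cdot/R_n)R_n^{-(N-2s)/2}$ with $R_n\to\infty$, makes all poles look like a single pole at $0$, since $|x/R_n-a_j/R_n|\to|x|$. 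This gives $\sum\lambda_i\le(1-\mu_{\LA})h_{N,s}<h_{N,s}$. The justification is dominated convergence of $\int H_{\LA}u_n^2$, using that the $\Ds$ norm is scale invariant.

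\emph{Sufficiency.} Assume $\lambda_i<h_{N,s}$ and $\sum\lambda_i<h_{N,s}$. Choose $a_i=R e_i$, with distinct points $e_i$ and $R$ large, and argue by contradiction along $R\to\infty$ using an IMS-type localization. Take a partition of unity $\sum_{j=0}^k\psi_j^2=1$, where $\psi_j$ is supported in $B(a_j,\delta R)$ for $j\ge1$ and $\psi_0$ is supported away from all the poles. For the fractional form we have
\[
\norm{u}^2_{\Ds}=\sum_j\norm{\psi_ju}^2_{\Ds}-\sum_j\int u^2 K_j\dx,
\]
where $K_j(x)=\frac{C_{N,s}}2\int\frac{(\psi_j(x)-\psi_j(y))^2}{|x-y|^{N+2s}}\d y$ and $\sum_jK_j\lesssim (\delta R)^{-2s}(1+|x|/R)^{-2s}$ in a suitable sense.

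On the ball around $a_j$, the term $\lambda_j|x-a_j|^{-2s}$ is controlled by $\lambda_j/h_{N,s}$ times $\norm{\psi_ju}^2$. The remaining potentials there are $O(R^{-2s})$. Far out, the total potential behaves like $(\sum\lambda_i)^+|x|^{-2s}$. I would handle this region by an extra cut-off at scale $MR$, beyond which Hardy applies with constant $\sum\lambda_i/h_{N,s}<1$.

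The $R^{-2s}$ errors are not small relative to $\norm{u}^2_{\Ds}$ uniformly. To absorb them, I would rescale $x\mapsto Rx$, so that the poles sit at $e_i$ and the errors become $O(1)$ bounded potentials on bounded sets. These are handled by the slack $1-\max(\lambda_i^+/h_{N,s},\,(\sum\lambda_i)^+/h_{N,s})$ applied to Hardy on the intermediate region.

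\textbf{Main obstacle.} The real obstacle is this intermediate region $\{|x|\sim R\}$ away from the poles, where the potential is of Hardy size. The cleanest fix I would try first is to choose the partition into $k+1$ pieces with overlaps of size comparable to $R$. Then each localized piece sees essentially a single effective pole: $a_j$ near $a_j$, or the origin with mass $\sum\lambda_i$ at infinity. In every case the effective mass is strictly below $h_{N,s}$, and the localization errors are a small fraction of the Hardy term by choice of overlap ratios.
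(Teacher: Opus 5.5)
Your necessity argument is correct. It is the same argument the paper runs in the proof of \Cref{thm:nonex}: dilate a near-optimal Hardy function towards $a_i$, respectively towards infinity. (The paper itself only quotes the present statement from \cite{Felli-Roberto}.) The strictness step is simpler than you make it: since $\norm{u_n}_{\Ds}=1$, the test already gives $\mu_{\LA}\le 1-\lambda_i/h_{N,s}$ and $\mu_{\LA}\le 1-\sigma_{\bm{\lambda}}/h_{N,s}$. Both are strict as soon as $\mu_{\LA}>0$. The sufficiency part, however, has a genuine gap.

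\emph{No small parameter.} We have $Q_{\bm{\lambda},R\bm{a}}\big(R^{-\frac{N-2s}{2}}u(\cdot/R)\big)=Q_{\LA}(u)$. Hence $\mu_{\bm{\lambda},R\bm{e}}=\mu_{\bm{\lambda},\bm{e}}$ for every $R>0$, and neither ``$R$ large'' nor a contradiction along $R\to\infty$ buys anything. Accordingly, after your rescaling the single-scale IMS errors are of exact Hardy size on the transition layers ($K_j\approx|x-a_j|^{-2s}$ there). They are a fixed, not small, multiple of the Hardy weight, and nothing guarantees that the slack absorbs them.

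\emph{The argument proves too much.} It never uses the choice of the $e_i$, so it would give positivity for \emph{every} configuration, which is false. If $\sum_{\lambda_i>0}\lambda_i<h_{N,s}$, positivity holds for all poles trivially by \Cref{thm:FMO2}. So the statement only has content when $\sum_{\lambda_i>0}\lambda_i>h_{N,s}$, and then \Cref{thm:FMO2} yields configurations with $\mu_{\LA}<0$. For example, put masses $0.8\,h_{N,s}$ at $0$ and at $\varepsilon\bm{e}_1$, and $-0.7\,h_{N,s}$ at $\bm{e}_1$. A Hardy optimizer at scale $\rho$ with $\varepsilon\ll\rho\ll 1$ then sees one pole of mass $1.6\,h_{N,s}$. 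This is exactly your ``intermediate region'': there a cluster of poles acts as a single pole carrying the sum of its masses. The outer piece of any $(k+1)$-piece partition at one scale therefore faces a genuinely multi-polar form, i.e.\ the original problem again, and adjusting overlaps cannot fix this.

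What is missing is a hierarchical placement of the poles together with an induction on $k$.
\begin{itemize}
\item Order the masses so that every partial sum is $<h_{N,s}$, e.g.\ nonpositive ones first, then the positive ones.
\item Suppose a cluster $A\subset B_1$ is already built, with total mass $\sigma_A<h_{N,s}$ and $\mu_A>0$. Add the next pole $b$ at distance $R\gg 1$.
\item Localize with three pieces whose cut-offs vary on a logarithmic scale: functions of $\log|x|/\log R$ and $\log|x-b|/\log R$, transitioning between $R^{1/4}$ and $R^{1/2}$.
\item Split the $y$-integral into $|y|\ll|x|$, $|y|\sim|x|$, $|y|\gg|x|$. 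This gives $K_j(x)\le\varepsilon(R)\,(|x|^{-2s}+|x-b|^{-2s})$ with $\varepsilon(R)\to0$, which Hardy absorbs.
\item The two inner pieces are controlled by $\mu_A$ and by Hardy at $b$, up to $O(R^{-s})$ Hardy-type errors.
\item The outer piece sees $\sigma_A|x|^{-2s}+\lambda_b|x-b|^{-2s}$ up to $o(|x|^{-2s})$. This is positive definite by two Hardy inequalities, because $\sigma_A^{+}+\lambda_b^{+}<h_{N,s}$.
\end{itemize}

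A minor point: the fractional IMS relation is only an inequality, $\norm{u}_{\Ds}^2\ge\sum_j\norm{\psi_ju}_{\Ds}^2-\sum_j\int K_ju^2\dx$, because the cross term involves $u(x)u(y)$. Fortunately it points in the direction you need.
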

On the other hand, we also recall from \cite{Felli-Roberto} this other result, which provides:
\begin{itemize}
	\item a sufficient condition on the masses such that \eqref{eq:positivity} holds for any configuration of poles;
	\item a sufficient condition for the negativity of $\mu_{\LA}$ which, in view of \Cref{lemma:Q_nonnegative} implies non-existence of solutions.\footnote{In \cite[Theorem 1.5]{Felli-Roberto} the authors claim that there exists a configuration poles such that $\mu_{\LA}\leq 0$. However, by scanning the proof one easily sees that is proved that $\mu_{\LA}<0$.}
\end{itemize}
  
\begin{theorem}[\cite{Felli-Roberto}, Theorem 1.5]\label{thm:FMO2}
	If $\bm{\lambda}=(\lambda_1,\dots,\lambda_k)\in\R^k$ satisfies
	\begin{equation*}
		\sum_{\substack{i=1 \\ \lambda_i>0}}^k\lambda_i<h_{N,s},
	\end{equation*}
	then the quadratic form $Q_{\LA}$ is positive definite, i.e. $\mu_{\LA}>0$, for all $(a_1,\dots,a_k)\in \R^{kN}$. On the other hand, if $\bm{\lambda}=(\lambda_1,\dots,\lambda_k)\in\R^k$ satisfies
	\[
	\sum_{\substack{i=1 \\ \lambda_i>0}}^k\lambda_i>h_{N,s}
	\]
	then there exists a configuration of poles $\bm{a}=(a_1,\dots,a_k)\in\R^{kN}$ such that $Q_{\LA}$ is not non-negative, i.e. $\mu_{\LA}<0$. In particular, \eqref{eq:problem} does not admit solutions.
\end{theorem}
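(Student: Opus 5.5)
The plan is to treat the two assertions separately. The first follows from translation invariance of the fractional Hardy inequality. The second follows from a test function concentrated near a cluster of coinciding positive poles.

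\emph{Positivity.} Since $\frac{\lambda_i}{|x-a_i|^{2s}}\leq \frac{\lambda_i^+}{|x-a_i|^{2s}}$, the negative masses can only increase $Q_{\LA}$, so it suffices to bound the positive part of the potential. The norm $\norm{\cdot}_{\Ds}$ is invariant under translations. Applying the fractional Hardy inequality to $u(\cdot+a_i)$ therefore gives
\[
h_{N,s}\int_{\R^N}\frac{u^2}{|x-a_i|^{2s}}\dx\leq \norm{u}_{\Ds}^2\quad\text{for every }i \text{ and all } u\in\Ds.
\]
Summing over the indices with $\lambda_i>0$ yields
\[
Q_{\LA}(u)\geq \Big(1-\frac{1}{h_{N,s}}\sum_{\lambda_i>0}\lambda_i\Big)\norm{u}_{\Ds}^2 .
\]
Hence $\mu_{\LA}\geq 1-h_{N,s}^{-1}\sum_{\lambda_i>0}\lambda_i>0$ for every configuration $\bm{a}$.

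\emph{Negativity.} Set $\Lambda:=\sum_{\lambda_i>0}\lambda_i>h_{N,s}$. Since $h_{N,s}$ is the optimal Hardy constant, there is $\varphi\in\Ds$ with
\[
\norm{\varphi}_{\Ds}^2-\Lambda\int_{\R^N}\frac{\varphi^2}{|x|^{2s}}\dx<0 .
\]
I would then use the density of $C_c^\infty(\R^N\setminus\{0\})$ in $\Ds$, which holds since $N>2s$, together with continuity of both terms in the $\Ds$-norm, the Hardy term being controlled by Hardy's inequality. This lets us take $\varphi\in C_c^\infty(\R^N\setminus\{0\})$ with $\supp\varphi\subset\{r\leq|x|\leq R_0\}$. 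Next fix distinct vectors $b_i$ and a unit vector $e$. Place the positive poles at $a_i=\varepsilon b_i$ and the non-positive poles at $a_i=Re+b_i$. On $\supp\varphi$, for $\varepsilon$ small, one has $|x-\varepsilon b_i|\geq r/2$. By dominated convergence, $\int\varphi^2|x-\varepsilon b_i|^{-2s}\dx\to\int\varphi^2|x|^{-2s}\dx$ as $\varepsilon\to0$. Similarly, $\int\varphi^2|x-Re-b_i|^{-2s}\dx\to0$ as $R\to\infty$, because $\varphi$ has compact support. Consequently
\[
Q_{\LA}(\varphi)\to \norm{\varphi}_{\Ds}^2-\Lambda\int_{\R^N}\frac{\varphi^2}{|x|^{2s}}\dx<0 ,
\]
so $\mu_{\LA}<0$ for $\varepsilon$ small and $R$ large. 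Nonexistence of solutions to \eqref{eq:problem} then follows from \Cref{lemma:Q_nonnegative}.

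The only step requiring some care is the approximation by a function supported away from the origin, i.e. the density of $C_c^\infty(\R^N\setminus\{0\})$ in $\Ds$. This holds because points have zero $s$-capacity when $N>2s$: one truncates with cutoffs $\eta(x/\delta)$ and notes that $\norm{\eta(\cdot/\delta)\varphi}_{\Ds}\to0$ as $\delta\to0$. Everything else is elementary.
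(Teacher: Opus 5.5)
Your proof is correct. The paper itself does not prove this statement. It imports it from \cite[Theorem 1.5]{Felli-Roberto}, adding only a footnote that the argument there actually yields the strict inequality $\mu_{\LA}<0$. The nonexistence clause is then obtained, exactly as you do, from \Cref{lemma:Q_nonnegative}.

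Your positivity part applies the translated Hardy inequality at each positive pole and drops the nonpositive masses. It gives the explicit bound $\mu_{\LA}\geq 1-h_{N,s}^{-1}\sum_{\lambda_i>0}\lambda_i$, which is the standard argument.

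For the negativity part, the closest argument in the paper is the proof of \Cref{thm:nonex}. There the poles stay fixed, a near-optimizer $u$ of the Hardy quotient is rescaled, and \cite[Lemma 8.1]{Felli-Roberto} controls the shifted Hardy integrals $\int u^2|x-b/\rho|^{-2s}\dx$ for a general $u\in\Ds$. You instead keep the test function fixed and move the poles. This is equivalent by the dilation covariance $Q_{\bm{\lambda},\bm{a}}(u_\rho)=Q_{\bm{\lambda},\bm{a}/\rho}(u)$.

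The key difference is that you first replace the near-optimizer by one in $C_c^\infty(\R^N\setminus\{0\})$. Convergence of the Hardy terms then reduces to elementary dominated convergence on a fixed annulus, and no external lemma is needed. The price is the density of $C_c^\infty(\R^N\setminus\{0\})$ in $\Ds$, which you justify correctly via $\norm{\eta(\cdot/\delta)\varphi}_{\Ds}\to 0$ for $N>2s$.

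Clustering the positive poles at scale $\varepsilon$ and pushing the nonpositive ones to distance $R$ is exactly what is needed here. Unlike in \Cref{thm:nonex}, the total sum $\sigma_{\bm{\lambda}}$ may lie below $h_{N,s}$, so a single global rescaling would not suffice. Your construction also exhibits $Q_{\LA}(\varphi)<0$ directly, so it gives the strict inequality flagged in the paper's footnote without further inspection.
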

Then, by following the arguments in \cite[Theorem 1.4]{Felli-Roberto}, one can get a sufficient condition on the masses for the non-existence of solutions for any configuration of poles (see \Cref{sec:conclusion} for the proof).
\begin{theorem}\label{thm:nonex}
	Let $k\geq 2$ and let $\bm{\lambda}=(\lambda_1,\dots,\lambda_k)\in\R^k$. If
	\begin{equation*}
		\lambda_i>h_{N,s}~\text{for some }i=1,\dots,k\quad\text{or}\quad \sum_{i=1}^k\lambda_i>h_{N,s}
	\end{equation*}
	then for any $\bm{a}=(a_1,\dots,a_k)\in\R^{kN}$ there exists $u\in\Ds$ such that $Q_{\LA}(u)<0$. In particular, $\mu_{\LA}<0$ and \eqref{eq:problem} does not admit solutions.
\end{theorem}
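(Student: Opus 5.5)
We construct explicit test functions in $\Ds$ on which $Q_{\LA}$ is negative, treating the two hypotheses separately. The tools are the optimality of $h_{N,s}$ in the fractional Hardy inequality, scaling, and translation. The ``in particular'' part then follows at once: $Q_{\LA}(u)<0$ gives $\mu_{\LA}<0$, and \Cref{lemma:Q_nonnegative} rules out weak solutions of \eqref{eq:problem}.

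\emph{Case 1: $\lambda_i>h_{N,s}$ for some $i$.} Since $h_{N,s}$ is the sharp Hardy constant, there is $\varphi\in C_c^\infty(\R^N)$ with
\[
\norm{\varphi}_{\Ds}^2<\lambda_i\int_{\R^N}\frac{\varphi^2}{|x|^{2s}}\dx .
\]
Define the concentrating family $\varphi_\varepsilon(x):=\varepsilon^{-\frac{N-2s}{2}}\varphi\big((x-a_i)/\varepsilon\big)$. Both the Gagliardo seminorm and $\int\varphi_\varepsilon^2|x-a_i|^{-2s}\dx$ are invariant under this scaling, so the leading part of $Q_{\LA}(\varphi_\varepsilon)$ is negative and independent of $\varepsilon$. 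For $j\neq i$ we need
\[
\int_{\R^N}\frac{\varphi_\varepsilon^2}{|x-a_j|^{2s}}\dx\to0 .
\]
This holds because the support of $\varphi_\varepsilon$ shrinks to $a_i$, which stays at positive distance from $a_j$, and $\int\varphi_\varepsilon^2=\varepsilon^{2s}\int\varphi^2\to0$. Hence $Q_{\LA}(\varphi_\varepsilon)<0$ for $\varepsilon$ small.

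\emph{Case 2: $\sum_i\lambda_i>h_{N,s}$.} Now spread the function out at infinity. Pick $\varphi\in C_c^\infty(\R^N)$ with
\[
\norm{\varphi}_{\Ds}^2<\Big(\sum_i\lambda_i\Big)\int_{\R^N}\frac{\varphi^2}{|x|^{2s}}\dx .
\]
We may take $\varphi$ vanishing near the origin: $C_c^\infty(\R^N\setminus\{0\})$ is dense in $\Ds$ because $N>2s$ (points have zero capacity), and both terms are continuous in the $\Ds$ norm thanks to the Hardy inequality. Set $\varphi_R(x):=R^{-\frac{N-2s}{2}}\varphi(x/R)$. Its seminorm is unchanged, and the change of variables $x=Ry$ gives
\[
\int_{\R^N}\frac{\varphi_R^2}{|x-a_j|^{2s}}\dx=\int_{\R^N}\frac{\varphi(y)^2}{|y-a_j/R|^{2s}}\,\mathrm{d}y\longrightarrow\int_{\R^N}\frac{\varphi^2}{|y|^{2s}}\,\mathrm{d}y\quad\text{as }R\to\infty .
\]
Since $\supp\varphi$ is a compact set away from $0$ and $a_j/R\to0$, the integrand converges uniformly there. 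Therefore
\[
Q_{\LA}(\varphi_R)\to\norm{\varphi}_{\Ds}^2-\Big(\sum_j\lambda_j\Big)\int_{\R^N}\frac{\varphi^2}{|y|^{2s}}\,\mathrm{d}y<0 .
\]

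\emph{Main obstacle.} The only delicate points are the approximation and localization details. These are the existence of a compactly supported function nearly attaining the Hardy quotient, and its support avoiding the origin in Case 2. Both follow from density of $C_c^\infty(\R^N\setminus\{0\})$ in $\Ds$ together with continuity of the Hardy term. Negative $\lambda_j$ cause no trouble, since every cross term is handled by a limit rather than by a sign.
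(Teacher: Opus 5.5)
Your proposal is correct and follows essentially the same route as the paper. In the first case you concentrate a Hardy-violating function at $a_i$, and in the second you dilate it to infinity, using in both the scaling invariance of the seminorm and of the Hardy term. The only difference is technical: you reduce to $C_c^\infty$ test functions (supported away from the origin in the second case), so the convergence of the cross terms becomes elementary, whereas the paper works with a general $u\in\Ds$ and invokes Lemma 8.1 of \cite{Felli-Roberto} for those limits.
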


Finally, our last main result describes conditions on the masses (which do not fit into the assumptions of \Cref{thm:nonex}) for which we can tell there are no minimizers for \eqref{eq:min_S_LA}.
\begin{theorem}\label{thm:non_achieved}
	Let $k\geq 2$ and let $\bm{\lambda}=(\lambda_1,\dots,\lambda_k)\in\R^k$ be such that
	\begin{equation}\label{eq:non_ex_th1}
		\lambda_1<0\quad\text{and}\quad\lambda_i\leq 0\quad\text{for all }i=1,\dots,k-1
		\quad\text{and}\quad 0< \lambda_k< h_{N,s}
	\end{equation}
	or
	\begin{equation}\label{eq:non_ex_th2}
		0\leq \lambda_i<h_{N,s}\quad\text{for all }i=1,\dots,k\quad\text{and}\quad \sum_{i=1}^k\lambda_i< h_{N,s}.
	\end{equation}
	Then, for any $\bm{a}=(a_1,\dots,a_k)\in\R^{kN}$ the value $S_{\LA}$ is not achieved.
\end{theorem}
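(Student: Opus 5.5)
The plan is to show that in both regimes $S_{\LA}=S_{\lambda_{\max}}$, the single-pole best constant for the largest mass ($\lambda_k$ under \eqref{eq:non_ex_th1}; $\max_i\lambda_i$ under \eqref{eq:non_ex_th2}, where $S_0=S$ is the fractional Sobolev constant). Then we argue that a minimizer would force a strict inequality, which is a contradiction. Throughout, $S_\lambda:=\inf_{u\neq 0}\big(\norm{u}_{\Ds}^2-\lambda\int|x|^{-2s}u^2\big)/\norm{u}_{L^{2^*_s}}^2$. This quantity is nonincreasing in $\lambda$, is attained for $\lambda\in[0,h_{N,s})$, and satisfies $S_\lambda<S$ for $\lambda>0$. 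We will use these standard facts on the single-pole problem from \Cref{sec:single}.

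\emph{Upper bound.} Let $w$ be a minimizer for $S_{\lambda_{j}}$, where $j$ is the index of the maximal mass. Consider the concentrating family $w_\varepsilon(x)=\varepsilon^{-\frac{N-2s}{2}}w((x-a_j)/\varepsilon)$. The quotient is invariant, and the terms $\int\lambda_i|x-a_i|^{-2s}w_\varepsilon^2$ with $i\neq j$ tend to $0$ as $\varepsilon\to0$, because the mass concentrates at $a_j\neq a_i$ and the potential is bounded near $a_j$. Hence $S_{\LA}\leq S_{\lambda_j}$. Under \eqref{eq:non_ex_th2} with all $\lambda_i=0$ the statement is the nonexistence of Sobolev minimizers for... this case is in fact excluded only if one interprets it correctly. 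Indeed, if all $\lambda_i=0$, then $S_{\LA}=S$ is attained, so we must have some $\lambda_i>0$. I expect the paper to implicitly assume $\bm\lambda\neq0$, and I will note this.

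\emph{Lower bound.} Under \eqref{eq:non_ex_th1}, $H_{\LA}\leq \lambda_k|x-a_k|^{-2s}$ pointwise, since the other terms are nonpositive. Translation invariance then gives $Q_{\LA}(u)\geq Q_{\lambda_k,a_k}(u)\geq S_{\lambda_k}\norm{u}^2_{2^*_s}$. Moreover the first inequality is strict for $u\neq0$, because $\lambda_1<0$ and $\int|x-a_1|^{-2s}u^2>0$. So if $u$ attained $S_{\LA}=S_{\lambda_k}$, we would get $S_{\LA}>S_{\lambda_k}$, a contradiction. Under \eqref{eq:non_ex_th2}, I would use a convexity/splitting argument. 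Write $\Lambda:=\sum_i\lambda_i<h_{N,s}$ and $t_i:=\lambda_i/\Lambda$ (for $\Lambda>0$), so that
\[
Q_{\LA}(u)=\sum_i t_i\Big(\norm{u}_{\Ds}^2-\Lambda\int\frac{u^2}{|x-a_i|^{2s}}\Big)\geq S_{\Lambda}\norm{u}_{2^*_s}^2 .
\]
This alone gives only $S_{\LA}\geq S_\Lambda$, which is weaker than $S_{\lambda_{\max}}$. So the genuine lower bound here must come from a concentration-compactness argument on a minimizing sequence (as announced in the abstract): mass either concentrates at a pole $a_i$ (cost $S_{\lambda_i}$), concentrates elsewhere or vanishes at infinity (cost $S$), or splits. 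Consequently, if $S_{\LA}<S_{\lambda_{\max}}$ minimizers exist. So nonexistence requires instead the direct contradiction below.

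\emph{Contradiction when a minimizer exists.} Suppose $u\geq0$ attains $S_{\LA}$ (we may take $|u|$, since $\norm{|u|}_{\Ds}\leq\norm{u}_{\Ds}$). The key step, and the main obstacle, is to show $S_{\LA}\geq S_{\lambda_{\max}}$ in case \eqref{eq:non_ex_th2}, where the potentials add rather than compete. My first attempt would be to pass to the extension of \Cref{sec:single} and use a partition-of-unity/IMS localization with cutoffs around each pole. This would bound $Q_{\LA}$ from below by $\sum_i Q_{\lambda_i,a_i}(\varphi_iu)$ plus errors; the errors are controlled because $\sum\lambda_i<h_{N,s}$ gives room via \Cref{thm:FMO2}, and each localized piece is estimated with $S_{\lambda_i}\geq S_{\lambda_{\max}}$ together with the superadditivity $\sum\norm{\varphi_iu}^2_{2^*_s}\geq\norm{u}^2_{2^*_s}$. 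This is where I expect the difficulty, since the nonlocal IMS error terms do not vanish for fixed $u$. If the localization does not close, I would fall back on a dilation argument: $u_R(x)=R^{-\frac{N-2s}{2}}u(x/R)$ sends all poles $a_i$ to $a_i/R$. Comparing with the merged configuration, the strict inequality $Q_{\LA}(u)>$ (single-pole quotient at the largest mass) should follow from $\int\lambda_i|x-a_i|^{-2s}u^2$ not being maximized for a separated configuration. Once the lower bound is in hand, strictness (every $\lambda_i>0$ term with a pole different from the concentration point contributes strictly less) yields the contradiction exactly as in case \eqref{eq:non_ex_th1}.
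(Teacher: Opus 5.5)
Your treatment of \eqref{eq:non_ex_th1} is correct and follows the paper's argument. The gap is in case \eqref{eq:non_ex_th2}. The identity $S_{\LA}=S_{\lambda_{\max}}$ you aim for is false as soon as two masses are positive. So the lower bound $S_{\LA}\geq S_{\lambda_{\max}}$ you try to get by IMS localization or by dilation cannot be proved at all; the obstruction is not the nonlocal error terms. To see this, let $w$ be a minimizer of $S_{\sigma_{\bm{\lambda}}}$ (it exists, since $0\le\sigma_{\bm{\lambda}}<h_{N,s}$) and spread it out: $w_R(x)=R^{-\frac{N-2s}{2}}w(x/R)$ with $R\to+\infty$. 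Then $\int_{\R^N}|x-a_i|^{-2s}w_R^2\dx=\int_{\R^N}|y-a_i/R|^{-2s}w^2\,\mathrm{d}y\to\int_{\R^N}|y|^{-2s}w^2\,\mathrm{d}y$ for every $i$. All poles merge, and you get $S_{\LA}\leq S_{\sigma_{\bm{\lambda}}}$. This is strictly below $S_{\lambda_{\max}}$ whenever $\sigma_{\bm{\lambda}}>\lambda_{\max}$, because $\lambda\mapsto S_\lambda$ is strictly decreasing on $[0,h_{N,s})$ (test $S_{\lambda'}$ with a minimizer of $S_\lambda$). The same oversight affects your concentration-compactness heuristic. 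Mass escaping to infinity by dilation costs $S_{\sigma_{\bm{\lambda}}}$, not $S$; this is why $S_{\sigma_{\bm{\lambda}}}$ enters \Cref{lemma:PS_c}. Your claim that $S_{\LA}<S_{\lambda_{\max}}$ yields minimizers would produce minimizers in exactly the situation the theorem rules out.

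The correct lower bound is the one you discarded: your convex-combination identity gives $S_{\LA}\geq S_{\sigma_{\bm{\lambda}}}$, hence $S_{\LA}=S_{\sigma_{\bm{\lambda}}}$. The paper gets this lower bound by Schwarz symmetrization (Hardy--Littlewood plus P\'olya--Szeg\H{o}), then uses the equality case in P\'olya--Szeg\H{o} to get radial symmetry of a minimizer. Your route is more elementary, and it also settles non-attainment directly. Suppose $u$ with $\norm{u}_{L^{2^*_s}(\R^N)}=1$ attained $S_{\LA}$. Then every bracket with $t_i>0$ equals $S_{\sigma_{\bm{\lambda}}}$, so $u(\cdot+a_i)$ minimizes $S_{\sigma_{\bm{\lambda}}}$ for each $i$ with $\lambda_i>0$. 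For two such poles $a_i\neq a_j$, subtract the two Euler--Lagrange equations (they have the same multiplier $S_{\sigma_{\bm{\lambda}}}$). This gives $\big(|x-a_i|^{-2s}-|x-a_j|^{-2s}\big)u=0$ a.e., so $u\equiv0$. This argument needs two positive masses. Besides $\bm{\lambda}=0$, which you flagged, \eqref{eq:non_ex_th2} also allows $\lambda_1=\dots=\lambda_{k-1}=0<\lambda_k$. There $S_{\LA}=S_{\lambda_k}$ is attained by a translated single-pole minimizer. The paper's final step (radial symmetry of $H_{\LA}$ about $x_0$) gives no contradiction in that case either, so the statement itself needs that extra hypothesis.
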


To conclude, we provide a simple example of a configuration of masses/poles in which, thanks to the previous results, we know that there exists a minimizer.
\begin{example}[3 poles]
	Let $k=3$. Let us take $\lambda\in(0,h_{N,s}/2)$ and let
	\begin{equation*}
		\lambda_1=-\lambda,\quad\lambda_2=\lambda_3=\lambda.
	\end{equation*}
	Moreover, let 
	\begin{equation*}
		a_1=T\bm{e}_1,\quad a_2=t\bm{e}_1,\quad a_3=0,
	\end{equation*}
	for $T>t>0$. We have that \Cref{thm:FMO2} (first part) ensures that $Q_{\LA}$ is positive definite, i.e. \eqref{eq:positivity}, while one can directly check that \eqref{eq:main_order}--\eqref{eq:main_cases} are satisfied. Therefore, there exists a minimizer for $S_{\LA}$, hence a positive solution to \eqref{eq:problem}.
\end{example}
In general, a minimizer exists every time the following happen:
\begin{itemize}
	\item the $k$-th singularity $\lambda_k$ is positive but lower than the threshold $h_{N,s}$ (so that \eqref{eq:main_lambda_k} holds);
	\item the positive singularities are located near the $k$-th (so that  \eqref{eq:positivity} holds) but they are not too strong (so that the first part of \Cref{thm:FMO2} applies);
	\item the negative singularities are strong enough (such that \eqref{eq:main_sum} holds) but they are located far away from $a_k$ (so that \eqref{eq:main_cases} holds).
\end{itemize}

\medskip

Let us make some comments concerning our main results and let us provide some context. First of all, we point out that the problem \eqref{eq:problem} we are investigating presents a doubly-critical behavior: on one hand, due to the presence of the power $2^*_s$, which is the critical threshold for the Sobolev embedding and which is invariant under rescalings of the type $u_r^{x_0}(x):=r^{-(N-2s)/2}u((x-x_0)/r)$; on the other hand, due to the presence of the multi-polar Hardy-type potential, which behaves as follows
\begin{equation*}
		\int_{\R^N}H_{\LA}(u_r^{a_j})^2\dx=\begin{bvp}
			&\lambda_j\int_{\R^N}\frac{u^2}{|x|^{2s}}\dx+o(1) &&\text{as }r\to 0, \\
			&\Bigg(\sum_{i=1}^k\lambda_i\Bigg)\int_{\R^N}\frac{u^2}{|x|^{2s}}\dx+o(1)&&\text{as }r\to +\infty.
		\end{bvp}
\end{equation*}
In particular, it is \enquote{almost} invariant with respect to such rescalings  and again there is no compactness in the embedding
\begin{equation*}
	\Ds\hookrightarrow L^2(\R^N;|x|^{-2s}\dx).
\end{equation*}
Moreover, it cannot be regarded as a lower-order perturbation with respect to the fractional Laplacian, since it scales the same with respect to dilations, i.e.
\begin{equation*}
	((-\Delta)^s -H_{\LA}(x))(u(rx))=r^{2s}((-\Delta)^su -H_{\bm{\lambda},r\bm{a}}u)(rx).
\end{equation*}
 Both these terms are hence critical and cause lack of compactness of minimizing sequences, which we must then recover by other means, when possible (indeed, we recall that there are explicit cases in which this cannot be done and minimizers do not exist, see \Cref{thm:nonex} and \Cref{thm:non_achieved}). 
 
 \medskip
 
 The operator $\mathcal{L}_{\LA}$ (actually, a translation of it) naturally emerges in relativistic quantum mechanics. Indeed, if one considers, for simplicity the case of one pole, then the Hamiltonian
 \begin{equation*}
 	(-\Delta+m^2)^s-\frac{Ze^2}{|x|^{2s}}
 \end{equation*}
 describes a spin zero relativistic particle of charge $e$ and mass $m$ in the Coulomb
 field of an infinitely heavy nucleus of charge $Z$ (in particular, the relevant case is $s=1/2$), see e.g. \cite{Herbst1977,lieb,lieb_loss}. Potentials with rate of decay $|x|^{-2s}$ are important in relativistic quantum mechanics as they serve as a bridge between regular potentials, which have stationary states, and singular potentials, which have unbounded energy and cause particles to fall to the center. 
 
\medskip

Differential operators with critical Hardy potentials have been widely investigated in the literature. In the case of a single pole, among many others, we quote \cite{Dipierro2016,fall2013,fall2020} (see also references therein) in which the authors study questions of existence or non-existence of solutions to certain PDEs driven by the operator
\begin{equation*}
	(-\Delta)^s-\frac{\lambda}{|x|^{2s}},
\end{equation*}
and \cite{CGHMV,MOV} (see also references therein) in which the authors study uniqueness or non-uniqueness of solutions. However, the case of multiple singularities is much less studied. In the local case $s=1$, the literature treating problems in the same spirit as the present paper is richer. Among many others, we mention \cite{FT2006,FT,Felli2007,FMT_anis,Felli_anis}, which investigated various aspects of equations with multipolar Hardy-type potentials and critical nonlinearities (for instance, the case of anisotropic potentials).

\medskip

We remark that, despite our result is the fractional counterpart of \cite{FT}, the methods we use in order to obtain them required several modifications. Let us here emphasize the main differences.

First, one immediately notices the non-local nature of the fractional Laplacian, in stark contrast with the pure local behavior of the standard Laplacian. In particular, this prevents us from directly using cut-off arguments and blow-up analysis, which are crucial steps in \cite{FT}. In order to overcome this difficulty, we exploit an extension procedure which was first established in \cite{molchanov_ext} and then popularized by the celebrated work by Caffarelli and Silvestre \cite{Caffarelli2007}. Roughly speaking, for any function $u\colon \R^N\to \R$ we consider an extension of it $U\colon \R^{N+1}_+\to \R$ to the upper half-space 
\begin{equation*}
	\R^{N+1}_+:=\{(x,t)\colon x\in\R^N,~t>0\}.
\end{equation*}
The extension can be chosen in such a way that
\begin{equation*}
	\begin{bvp}
		&U(x,0)=u(x),&&\text{for }x\in\R^N, \\
		&\lim_{t\to 0^+}t^{1-2s}\partial_t U(x,t)=\kappa_s (-\Delta)^su(x),&&\text{for }x\in \R^N,
	\end{bvp}
\end{equation*}
so that the fractional Laplacian can be seen as a Dirichlet-to-Neumann operator for a suitable \emph{local} operator acting on $\R^{N+1}_+$, with $\kappa_s>0$ as in \Cref{eq:kappa_s}. We refer to \Cref{sec:ext} for the details.

Another remarkable difference with \cite{FT} is that in \cite{FT} the authors strongly relied on the fact that solutions to the equation with a single pole
\begin{equation}\label{eq:one_pole_loc}
	\begin{bvp}
		-\Delta u-\frac{\lambda}{|x|^2}u&=u^{2^*-1}, &&\text{in }\R^N, \\
		u&>0, &&\text{in }\R^N\setminus\{0\}
	\end{bvp}
\end{equation}
are classified and explicitly known. More precisely, in \cite{terracini_hardy} the author proved that if $u$ solves \eqref{eq:one_pole_loc}, then it is of the form
\[u(x)=\mu^{-\frac{N-2}{2}}w^\lambda\left(\frac{x}{\mu}\right),\quad\text{for some }\mu>0,\]
where
\[w^\lambda(x)=\frac{(N(N-2)\nu_\lambda^2)^{\frac{N-2}{4}}}{(|x|^{1-\nu_\lambda}(1+|x|^{2\nu_\lambda}))^{\frac{N-2}{2}}}\]
and
\[\nu_\lambda:=\sqrt{1-\frac{4\lambda}{N-2}}.\]
On the other hand, in the fractional scenario, the situation is much more obscure. Indeed, the solutions to \eqref{eq:one_pole_pbm_i} are not classified. Furthermore, even if it is conjectured that they should be of the form
\[u(x)=\mu^{-\frac{N-2s}{2}}w^\lambda_s\left(\frac{x}{\mu}\right),\quad\text{for some }\mu>0,\]
\begin{equation}\label{eq:w_lambda}
	w^\lambda_s(x)=\frac{K_{N,s,\lambda}}{(|x|^{1-\alpha_\lambda}(1+|x|^{2\alpha_\lambda}))^{\frac{N-2s}{2}}},
\end{equation}
with $\alpha_\lambda=\alpha_{\lambda,s}>0$ as in \eqref{eq:alpha} and $K_{N,s,\lambda}>0$, we were not able to find any reference proving that $w^\lambda_s$ solves \eqref{eq:one_pole_pbm_i}. Indeed, such a proof presents highly non-trivial technical difficulties, due to the necessity of computing the fractional Laplacian (which is a non-local operator) of functions like \eqref{eq:w_lambda}. In order to overcome such obstacle, we proceed as follows:
\begin{enumerate}
	\item we notice that, in the range $0\leq \lambda<h_{N,s}$, there exists a solution to \eqref{eq:one_pole_pbm_i} (this was proved in \cite[Theorem 1.5]{Dipierro2016});
	\item exploiting the local asymptotics of solutions proved in \cite[Theorem 1.1]{Fall2014} and the upper and lower bounds proved in \cite[Thorem 1.7]{Dipierro2016}, we are able to detect the sharp asymptotic behavior of the solutions of \eqref{eq:one_pole_pbm_i} at the origin and at infinity (see \Cref{lemma:asy}).
\end{enumerate}
These two ingredients turn out to be sufficient for testing the Rayleigh quotient corresponding to \eqref{eq:one_pole_pbm_i} and obtain sharp upper bounds on the energy level (which is then a crucial ingredient in order to apply the concentration compactness argument). In particular such bounds (see \Cref{lemma:PS_c}) involve the following quantity
\begin{equation}\label{eq:S_lambda}
	S_{\lambda}:=\inf_{u\in\Ds\setminus\{0\}}\frac{\displaystyle\norm{u}_{\Ds}^2-\lambda\int_{\R^N}\frac{u^2}{\abs{x}^{2s}}\dx}{\norm{u}_{L^{2^*_s}(\R^N)}^2}
\end{equation}
which, in case $\lambda=0$ coincides with the best constant $S>0$ in the fractional Sobolev inequality, i.e.
\begin{equation}\label{eq:sobolev}
	S_0=S:=\inf_{u\in\Ds\setminus\{0\}}\frac{\displaystyle\norm{u}_{\Ds}^2}{\norm{u}_{L^{2^*_s}(\R^N)}^2},
\end{equation}
see \cite{Cotsiolis2004}. Finally, we denote
\begin{equation}\label{eq:sigma_lambda}
	\sigma_{\bm{\lambda}}:=\sum_{i=1}^k \lambda_i.
\end{equation}
 
\section{The extended formulation}\label{sec:ext}
In this section, we recall the nowadays standard extended formulation of the fractional Laplacian as a Dirichlet-to-Neumann operator. We refer to \cite{Caffarelli2007} for the original paper and to \cite{COR} for detailed proofs. We consider the space $\mathcal{D}^{1,2}(\overline{\R^{N+1}_+};t^{1-2s})$ defined as the completion of $C_c^\infty(\overline{\R^{N+1}_+})$ with respect to the norm induced by the scalar product
\begin{equation*}
	(u,v)_{\mathcal{D}^{1,2}_t(\overline{\R^{N+1}_+})}=\int_{\R^{N+1}_+}t^{1-2s}\nabla u\cdot\nabla v\dxdt.
\end{equation*}
In the following, we use the notation $\mathcal{D}^{1,2}_t(\overline{\R^{N+1}_+})$ in place of $\mathcal{D}^{1,2}(\overline{\R^{N+1}_+};t^{1-2s})$. We have that, in view of the validity of a Sobolev inequality
\begin{equation}\label{eq:sobolev_ext}
	S_{\textup{e}}\left( \int_{\R^{N+1}_+}t^{1-2s}\abs{u}^{2\gamma}\dxdt \right)^{\frac{1}{\gamma}}\leq \int_{\R^{N+1}_+}t^{1-2s}\abs{\nabla u}^2\dxdt\quad\text{for all }u\in \Dext
\end{equation}
where $S_{\textup{e}}>0$ and
\[
\gamma:=1+\frac{2}{N-2s},
\]
the space $\mathcal{D}^{1,2}_t(\overline{\R^{N+1}_+})$ can be characterized as a concrete functional space as follows
\begin{equation}\label{eq:dext_concr}
	\Dext=\left\{u\in L^{2\gamma}(\R^{N+1}_+;t^{1-2s})\colon \norm{u}_{\Dext}<+\infty \right\}.
\end{equation}
For the proof of \eqref{eq:sobolev_ext} we refer to \cite[Proposition 3.3]{Valdinoci-Dipierro} and for \eqref{eq:dext_concr} to \cite[Proposition 2.3]{COR}. Moreover, we have that there exists a linear, continuous and surjective trace operator
\begin{equation*}
	\mathrm{Tr}\colon \Dext\hookrightarrow \Ds
\end{equation*}
such that $(\mathrm{Tr}\, u)(x)=u(x,0)$ for all $u\in C_c^\infty(\overline{\R^{N+1}_+})$, see e.g. \cite[Proposition 2.5 \& 2.6]{COR}. In the following, for any $u\in\Dext$ and $\varphi\in\Ds$, we say that
\begin{equation*}
	u=\varphi\quad\text{on }\R^N
\end{equation*} 
if $\mathrm{Tr}\,u=\varphi$. Now, with these tools at hands, we can formulate the extension procedure. For any fixed $\varphi\in \Ds$, we consider the following minimization problem
\begin{equation*}
	\min\left\{\int_{\R^{N+1}_+}t^{1-2s}|\nabla u|^2\dxdt\colon u\in\Dext,~u=\varphi~\text{on }\R^N\right\}.
\end{equation*}
By classical methods in the calculus of variations, we know that this problem admits a unique solution $U_{\varphi}\in\Dext$ which weakly solves
\begin{equation*}
	\begin{bvp}
		-\dive(t^{1-2s}\nabla U_{\varphi})&=0,&&\text{in }\R^{N+1}_+, \\
		U_{\varphi}&=\varphi, &&\text{on }\R^N, \\
		-\lim_{t\to 0+}t^{1-2s}\partial_t U_{\varphi}&=\kappa_s(-\Delta)^s\varphi, &&\text{on }\R^N,
	\end{bvp}
\end{equation*}
that is
\begin{equation}\label{eq:extension_weak}
	(U_{\varphi},v)_{\Dext}=\kappa_s (\varphi,v)_{\Ds}\quad\text{for all }v\in \Dext,
\end{equation}
where
 \begin{equation}\label{eq:kappa_s}
	\kappa_s:=\frac{\Gamma(1-s)}{2^{2s-1}\Gamma(s)}.
\end{equation}
Moreover, by testing \eqref{eq:extension_weak} with $v=u$ we get that
\begin{equation*}
	\norm{U_{\varphi}}_{\Dext}^2=\kappa_s\norm{\varphi}_{\Ds}^2.
\end{equation*}
Now that the functional setting is fully established, we can provide the extended formulation of the problem we are interested in. More precisely, we have that
\begin{equation*}
	Q_{\LA}(u,v)=\kappa_s^{-1} (U_u,U_v)_{\Dext}-\int_{\R^N}H_{\LA}uv\dx
\end{equation*}
and so we can easily obtain the following characterization
\begin{equation*}
	S_{\LA}=\inf_{\substack{U\in \Dext \\ \Tr U\neq 0}}\frac{\displaystyle \int_{\R^{N+1}_+}t^{1-2s}|\nabla U|^2\dxdt-\kappa_s\sum_{i=1}^k\lambda_i\int_{\R^N}\frac{u^2}{|x-a_i|^{2s}}\dx}{\displaystyle\kappa_s \left(\int_{\R^N}|\Tr U|^{2^*_s}\dx\right)^{\frac{2}{2^*_s}}}.
\end{equation*}
On the other hand, solutions of \eqref{eq:problem} may be regarded as unconstrained critical points of the following functional
\begin{align*}
	G(u):&=\frac{C_{N,s}}{4}\int_{\R^{2N}}\frac{\abs{u(x)-u(y)}^2}{\abs{x-y}^{N+2s}}\dxdy -\frac{1}{2}\int_{\R^N}H_{\LA}u^2\dx-\frac{S_{\LA}}{2^*_s}\int_{\R^N}\abs{u}^{2^*_s}\dx, \\
	&=\frac{1}{2}\norm{u}_{\Ds}^2-\sum_{i=1}^k\lambda_i\norm{\frac{u}{\abs{x-a_i}^s}}_{L^2(\R^N)}^2-\frac{S_{\LA}}{2^*_s}\norm{u}_{L^{2^*_s}(\R^N)}^{2^*_s}.
\end{align*}
defined for $u\in\Ds$. Moreover, if we denote
\begin{equation*}\label{eq:J_ext}
	J(U):=\frac{1}{2}\int_{\R^{N+1}_+}t^{1-2s}|\nabla U|^2\dxdt-\frac{\kappa_s}{2}\int_{\R^N}H\Tr U^2\dx-\kappa_s\frac{\mathcal{S}_{\LA}}{2^*_s} \int_{\R^N}
	|\Tr U|^{2^*_s}\dx,
\end{equation*}
defined for $U\in\Dext$, in view of the extension procedure explained above, we have
\begin{equation*}
	J(U)=\kappa_s G(\Tr u)\quad\text{for all }U\in\Dext.
\end{equation*}

\section{Compactness of Palais-Smale sequences}

In view of the Ekeland Variational Principle \cite{ekeland}, we know that it is not restrictive to consider minimizing sequences of \eqref{eq:min_S_LA} which satisfy the Palais-Smale condition at level $S_{\LA}$. Hence, in the present section, we prove compactness of Palais-Smale sequences under suitable assumption on the energy level. We work with the extended formulation of the problem established in \Cref{sec:ext} and crucial tool is the concentration-compactness principle established in \cite{Valdinoci-Dipierro}.

Hereafter, we denote by $C_0(\overline{\R^{N+1}_+})$ the closure of $C_c^\infty(\overline{\R^{N+1}_+})$ in $L^\infty(\R^{N+1}_+)$. Moreover, we define the space of finite measures on $\overline{\R^{N+1}_+}$, hereafter denoted as $\mathcal{M}(\overline{\R^{N+1}_+})$, as the space of linear continuous functionals on $C_0(\overline{\R^{N+1}_+})$ and, with a little abuse of notation, for $\mu\in \mathcal{M}(\overline{\R^{N+1}_+})$ and $\phi\in C_0(\overline{\R^{N+1}_+})$ we may write
\begin{equation*}
	\int_{\overline{\R^{N+1}_+}}\phi\d \mu\qquad\text{in place of}\qquad	_{C_0(\overline{\R^{N+1}_+})}\langle \phi,\mu\rangle_{\mathcal{M}(\overline{\R^{N+1}_+})}.
\end{equation*}
We also consider the subspace of nonnegative finite measures
\begin{multline*}
	\mathcal{M}^+(\overline{\R^{N+1}_+}):=\Bigg\{\mu\in\mathcal{M}(\overline{\R^{N+1}_+})\colon \int_{\overline{\R^{N+1}_+}} \phi \,\mathrm{d}\mu\geq 0 \\
	\text{ for all }\phi\in C_0(\overline{\R^{N+1}_+})~\text{such that }\phi\geq 0\text{ in }\overline{\R^{N+1}_+}  \Bigg\}.
\end{multline*}
We now recall the notion of weak convergence.
\begin{definition}
	Let $\{\mu_n\}_n\sub\mathcal{M}(\overline{\R^{N+1}_+})$ and $\mu\in\mathcal{M}(\overline{\R^{N+1}_+})$. We say that $\mu_n$ \emph{weakly converges to} $\mu$ in $\mathcal{M}(\overline{\R^{N+1}_+})$ as $n\to \infty$, and we write
	\[
	\mu_n\rightharpoonup\mu \quad\text{weakly in }\mathcal{M}(\overline{\R^{N+1}_+}),\text{ as }n\to\infty,
	\]
	if, for all $\phi\in C_0(\overline{\R^{N+1}_+})$,
	\[
	\int_{\overline{\R^{N+1}_+}}\phi\d \mu_n\to\int_{\overline{\R^{N+1}_+}}\phi\d \mu,\quad\text{as }n\to\infty.
	\]
\end{definition}
In order to make use of the concentration-compactness principle, we need the notion of tight sequence recalled below.
\begin{definition}
	A sequence $\{u_n\}_n\sub\Dext$ is said to be \emph{tight} if for any $\epsilon>0$ there exists $R>0$ and $n_0\in\N$ such that
	\[
	\int_{\R^{N+1}_+\setminus B_R^+}t^{1-2s}\abs{\nabla u_n}^2\dxdt<\epsilon\quad\text{for all }n\geq n_0.
	\]
\end{definition}

Hereafter, for any $r>0$ we let $\eta_r\colon \overline{\R^{N+1}_+}\to \R$ be a cut-off function such that
\begin{equation}\label{eq:eta}
	\begin{gathered}
		\eta_r\in C^\infty(\overline{\R^{N+1}_+}),\qquad \eta_r=\begin{cases}
			1, &\text{in }\overline{B_r^+}, \\
			0, &\text{in }\overline{\R^{N+1}_+}\setminus B_{2r}^+, \\
		\end{cases} \\
		0\leq \eta_r\leq 1,\qquad \abs{\nabla\eta_r}\leq \frac{2}{r}.
	\end{gathered}
\end{equation}
Moreover, for $x_0\in\R^N=\partial\R^{N+1}_+$, we denote
\begin{equation}\label{eq:eta_t}
	\eta_r^{x_0}(z):=\eta_r(z-x_0).
\end{equation}

Since the proofs in the following involve testing minimization problems and the Palais-Smale condition for a Palais-Smale sequence $\{u_n\}_n\sub\Dext$ with $u_n$ suitably cut-off by multiplication with some $\eta_\epsilon^{x_0}$, we need the following preliminary lemma.

\begin{lemma}
	Let $\{u_n\}_n\sub\Dext $ be bounded and let $\eta_\epsilon^{x_0}$ be as in \eqref{eq:eta_t}. Then, for any $x_0\in\R^N$ there holds
	\begin{equation}\label{eq:claim_0_1}
		\lim_{\epsilon\to 0}\lim_{n\to\infty}\int_{\R^{N+1}_+}t^{1-2s}u_n^2|\nabla \eta_\epsilon^{x_0}|^2\dxdt=0,
	\end{equation}
	from which we then immediately deduce the following
	\begin{align}
		&\lim_{\epsilon\to 0}\lim_{n\to\infty}\int_{\R^{N+1}_+}t^{1-2s}u_n\nabla u_n\cdot\nabla \eta_\epsilon^{x_0}\dxdt=0 \label{eq:claim_0_2}\\
		&\lim_{\epsilon\to 0}\lim_{n\to\infty}\int_{\R^{N+1}_+}t^{1-2s}u_n\eta_\epsilon^{x_0}\nabla u_n\cdot\nabla \eta_\epsilon^{x_0}\dxdt=0.\label{eq:claim_0_3}
	\end{align}
\end{lemma}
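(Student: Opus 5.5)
By translation invariance in the $x$-variable we may take $x_0=0$ and write $\eta_\epsilon=\eta_\epsilon^{0}$. Since $\{u_n\}_n$ is bounded in $\Dext$, it is bounded in $L^{2\gamma}(\R^{N+1}_+;t^{1-2s})$ by \eqref{eq:sobolev_ext}. Up to a subsequence, $u_n\weak u$ weakly in $\Dext$, and the compact embedding of $H^1(B_R^+;t^{1-2s})$ into $L^2(B_R^+;t^{1-2s})$ gives $u_n\to u$ strongly in $L^2(B_{2\epsilon}^+;t^{1-2s})$ for every fixed $\epsilon$. This embedding is standard for the Muckenhoupt $A_2$ weight $t^{1-2s}$. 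Strictly speaking the inner limit $\lim_n$ should be read as a $\limsup_n$, or as taken along this subsequence; the bounds below are uniform in $n$, so this causes no trouble.

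Because $\nabla\eta_\epsilon$ is supported in $B_{2\epsilon}^+\setminus B_\epsilon^+$ and $|\nabla\eta_\epsilon|\le 2/\epsilon$, we get
\[
\lim_{n\to\infty}\int_{\R^{N+1}_+}t^{1-2s}u_n^2|\nabla\eta_\epsilon|^2\dxdt
=\int_{\R^{N+1}_+}t^{1-2s}u^2|\nabla\eta_\epsilon|^2\dxdt
\le \frac{4}{\epsilon^2}\int_{B_{2\epsilon}^+}t^{1-2s}u^2\dxdt .
\]
Hölder's inequality with exponents $\gamma$ and $\gamma/(\gamma-1)=\frac{N+2-2s}{2}$, applied to the measure $t^{1-2s}\dxdt$, then gives
\[
\int_{B_{2\epsilon}^+}t^{1-2s}u^2\dxdt\le \Big(\int_{B_{2\epsilon}^+}t^{1-2s}|u|^{2\gamma}\dxdt\Big)^{1/\gamma}\Big(\int_{B_{2\epsilon}^+}t^{1-2s}\dxdt\Big)^{(\gamma-1)/\gamma}.
\]
By scaling, $\int_{B_{2\epsilon}^+}t^{1-2s}\dxdt=c\,\epsilon^{N+2-2s}$. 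Since $(\gamma-1)/\gamma=\frac{2}{N+2-2s}$, the last factor is $c\,\epsilon^{2}$, which exactly cancels the $\epsilon^{-2}$ in front. The remaining factor $\|u\|_{L^{2\gamma}(B_{2\epsilon}^+;t^{1-2s})}^2$ tends to $0$ as $\epsilon\to0$ by absolute continuity of the integral, because $u\in L^{2\gamma}(\R^{N+1}_+;t^{1-2s})$. This proves \eqref{eq:claim_0_1}.

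For \eqref{eq:claim_0_2} and \eqref{eq:claim_0_3}, apply Cauchy--Schwarz together with $0\le\eta_\epsilon\le1$:
\[
\Big|\int_{\R^{N+1}_+}t^{1-2s}u_n\eta_\epsilon\nabla u_n\cdot\nabla\eta_\epsilon\dxdt\Big|\le \|u_n\|_{\Dext}\Big(\int_{\R^{N+1}_+}t^{1-2s}u_n^2|\nabla\eta_\epsilon|^2\dxdt\Big)^{1/2}.
\]
The same bound holds without the factor $\eta_\epsilon$. Since $\|u_n\|_{\Dext}$ is uniformly bounded, both claims follow from \eqref{eq:claim_0_1}.

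The only delicate step is the exact cancellation of the $\epsilon$ powers. It relies on the Sobolev exponent $2\gamma$ being critical for the weighted measure, whose homogeneous dimension is $N+2-2s$, and on having strong local $L^2$ convergence with the weight $t^{1-2s}$ so that the limit in $n$ can be passed inside.
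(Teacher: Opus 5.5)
Your proof is correct and follows the paper's own argument. You extract a subsequence converging strongly in $L^2_{\mathrm{loc}}(\R^{N+1}_+;t^{1-2s})$ and pass to the limit in $n$. You then apply H\"older with the critical exponent $2\gamma$, so the powers of $\epsilon$ cancel exactly; the paper applies H\"older to $u^2|\nabla\eta_\epsilon^{x_0}|^2$ before using $|\nabla\eta_\epsilon^{x_0}|\le 2/\epsilon$, which makes no difference. You conclude by absolute continuity for the fixed limit $u$, and get the other two limits by Cauchy--Schwarz.

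Your aside needs one correction. The inner limit must be read along the extracted subsequence, or for a sequence already converging weakly to some $u$, as in the later applications. It cannot be read as a $\limsup_n$ over an arbitrary bounded sequence. The smallness in $\epsilon$ comes from absolute continuity for the single limit $u$, so your claim that the bounds are uniform in $n$ is false. For a counterexample, take a bounded sequence that repeats infinitely often each rescaling $2^{m(N-2s)/2}w(2^m(\cdot-x_0))$, with $w$ fixed and bounded away from zero on $\overline{B_2^+}\setminus B_1^+$. Its $\limsup_n$ at $\epsilon=2^{-m}$ is bounded below by the same positive constant for every $m$.
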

\begin{proof}
	We observe that, up to a subsequence (still labeled as $\{u_n\}_n$), we have 
	\begin{align*}
		& u_n\weak u &&\text{weakly in }\Dext,~\text{in }L^{2\gamma}_{\textup{loc}}(\R^{N+1}_+,t^{1-2s})~\text{and in }L^{2^*_s}(\R^N) \\
		& u_n\to u &&\text{strongly in }L^p_{\textup{loc}}(\R^{N+1}_+,t^{1-2s}) ~\text{and in }L^q_{\textup{loc}}(\R^N)
	\end{align*}
	as $n\to\infty$, for all $p\in [1,2\gamma)$ and all $q\in [1,2^*_s)$, for some $u\in\Dext$.	Let us first prove \eqref{eq:claim_0_1}. First of all, we notice that, by the $L^p_{\textup{loc}}$ convergence, we can pass to the limit as $n\to\infty$, i.e.
\begin{align*}
	\lim_{n\to\infty}\int_{\R^{N+1}_+}t^{1-2s}u_n^2|\nabla \eta_\epsilon^{x_0}|^2\dxdt&=\lim_{n\to\infty}\int_{B_{2\epsilon}^+(x_0)}t^{1-2s}u_n^2|\nabla \eta_\epsilon^{x_0}|^2\dxdt \\
	&=\int_{B_{2\epsilon}^+(x_0)}t^{1-2s}u^2|\nabla \eta_\epsilon^{x_0}|^2\dxdt
\end{align*}
At this point, from H\"older inequality and the fact that $|\nabla\eta_\epsilon^{x_0}|\leq 2/\epsilon$ we derive that
\begin{align*}
	\int_{B_{2\epsilon}^+(x_0)}t^{1-2s}u^2|\nabla \eta_\epsilon^{x_0}|^2\dxdt &\leq \left(\int_{B_{2\epsilon}^+(x_0)}t^{1-2s}|u|^{2\gamma}\dxdt\right)^{\frac{1}{\gamma}}\left(\int_{B_{2\epsilon}^+(x_0)}t^{1-2s}|\nabla \eta_\epsilon^{x_0}|^{2\gamma'}\dxdt\right)^{\frac{1}{\gamma'}} \\
	&\leq \left(\int_{B_{2\epsilon}^+(x_0)}t^{1-2s}|u|^{2\gamma}\dxdt\right)^{\frac{1}{\gamma}}\left(2^{2\gamma'}\epsilon^{-2\gamma'}\int_{B_{2\epsilon}^+(x_0)}t^{1-2s}\dxdt\right)^{\frac{1}{\gamma'}} \\
	&\leq C\left(\int_{B_{2\epsilon}^+(x_0)}t^{1-2s}|u|^{2\gamma}\dxdt\right)^{\frac{1}{\gamma}}(\epsilon^{N+2-2s-2\gamma'})^{\frac{1}{\gamma'}} \\
	&\leq C\left(\int_{B_{2\epsilon}^+(x_0)}t^{1-2s}|u|^{2\gamma}\dxdt\right)^{\frac{1}{\gamma}},
\end{align*}
where $C>0$ is independent of $\epsilon$. Hence, by passing to the limit as $\epsilon\to 0$ we get \eqref{eq:claim_0_1}. In order to prove \eqref{eq:claim_0_2} and \eqref{eq:claim_0_3}, we observe that
\begin{align*}
	&\left|\int_{\R^{N+1}_+}t^{1-2s}u_n\eta_\epsilon^{x_0}\nabla u_n\cdot\nabla \eta_\epsilon^{x_0}\dxdt\right|+\left|\int_{\R^{N+1}_+}t^{1-2s}u_n\nabla u_n\cdot\nabla \eta_\epsilon^{x_0}\dxdt\right| \\
	&\leq 2\int_{B_{2\epsilon}^+(x_0)}t^{1-2s}|u_n||\nabla u_n||\nabla \eta_\epsilon^{x_0}|\dxdt \\
	&\leq 2\left(\int_{B_{2\epsilon}^+(x_0)}t^{1-2s}|\nabla u_n|^2\dxdt\right)^{\frac{1}{2}}\left(\int_{B_{2\epsilon}^+(x_0)}t^{1-2s}|u_n|^2|\nabla \eta_\epsilon^{x_0}|^2\dxdt\right)^{\frac{1}{2}}.
\end{align*}
Hence, since the first term is uniformly bounded in $n$ and $\epsilon$, by \eqref{eq:claim_0_1} we conclude the proof of both \eqref{eq:claim_0_2} and \eqref{eq:claim_0_3}.
\end{proof}

We now recall the concentration-compactness principle that we are going to use. The proof of the main part is contained in \cite{Valdinoci-Dipierro} and here we prove just some consequences which we need in the following.
\begin{theorem}\label{thm:conc_comp}
	Let $\{u_n\}_n\sub\Dext$ be a tight sequence such that
	\[
	u_n\rightharpoonup u\quad\quad\text{weakly in }\Dext~\text{as }n\to\infty,
	\]
	for some $u\in\Dext$. Let $\mu$ and $\nu$ be two positive finite Radon measures on $\overline{\R^{N+1}_+}$ and $\R^N$, respectively, and $\{\gamma_i\}_{i=1,\dots,k}$ a family of positive finite Radon measures on $\R^N$ such that
	\begin{align*}
		t^{1-2s}\abs{\nabla u_n}^2&\weak  \mu \quad\text{weakly in }\mathcal{M}(\overline{\R^{N+1}_+}), \\
		\abs{ u_n}^{2^*_s}&\weak \nu \quad\text{weakly in }\mathcal{M}(\R^N), \\
		\frac{ u_n^2}{\abs{x-a_i}^{2s}}&\weak \gamma_i\quad\text{weakly in }\mathcal{M}(\R^N),~\text{for all }i=1,\dots,k,
	\end{align*}
	as $n\to\infty$. Then there exists an at most countable set $\mathcal{J}\sub\N$, a family of points $\{x_j\}_{j\in \mathcal{J}}\sub\R^N\setminus\{a_1,\dots,a_k\}$ and families of coefficients $\{\mu_{a_i}\}_{i=1,\dots,k},~\{\mu_{x_j}\}_{j\in \mathcal{J}},~\{\nu_{a_i}\}_{i=1,\dots,k},~\{\nu_{x_j}\}_{j\in \mathcal{J}},$ $\{\gamma_{a_i}\}_{i=1,\dots,k}\sub\R$ such that $\mu_{x_j},\nu_{x_j}\geq 0$ for all $j\in \mathcal{J}$, $\mu_{a_i},\nu_{a_i},\gamma_{a_i}\geq 0$ for all $i=1,\dots,k$ and
	\begin{enumerate}
		\item[(i)] $\displaystyle \mu\geq t^{1-2s}\abs{\nabla u}^2+\sum_{i=1}^k\mu_{a_i}\delta_{a_i}+\sum_{j\in \mathcal{J}}\mu_{x_j} \delta_{x_j}$,
		\item[(ii)] $\displaystyle \nu=\abs{ u}^{2^*_s}+\sum_{i=1}^k\nu_{a_i}\delta_{a_i}+\sum_{j\in \mathcal{J}}\nu_{x_j}\delta_{x_j}$,
		\item[(iii)] $\displaystyle \gamma_i=\frac{{u}^2}{\abs{x-a_i}^{2s}}+\gamma_{a_i} \delta_{a_i}$, for all $i=1,\dots,k$.
	\end{enumerate}
	In addition, we have that
	\begin{equation}\label{eq:conc_comp_sobolev}
		\kappa_s S \nu_{a_i}^{\frac{2}{2^*_s}}\leq \mu_{a_i}\quad\text{and}\quad \kappa_s S \nu_{x_j}^{\frac{2}{2^*_s}}\leq \mu_{x_j},
	\end{equation}
	for all $i=1,\dots,k$ and all $j\in\mathcal{J}$, where $S>0$ is as in \eqref{eq:sobolev}.
\end{theorem}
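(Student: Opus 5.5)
The main part, (i)–(ii) together with \eqref{eq:conc_comp_sobolev}, I would take directly from the concentration-compactness principle of \cite{Valdinoci-Dipierro}. That result gives, for a tight sequence weakly converging in $\Dext$, an at most countable family of atoms $\{x_j\}$ such that $\nu=|u|^{2^*_s}+\sum_j\nu_{x_j}\delta_{x_j}$ and $\mu\geq t^{1-2s}|\nabla u|^2+\sum_j\mu_{x_j}\delta_{x_j}$, with $\kappa_sS\nu_{x_j}^{2/2^*_s}\leq\mu_{x_j}$. The only work here is bookkeeping. The poles $a_i$ may or may not belong to this atom set, so I would relabel: set $\nu_{a_i},\mu_{a_i}$ equal to the atom weights at $a_i$ if $a_i$ is an atom, and equal to $0$ otherwise. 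Then I remove the $a_i$ from the list $\{x_j\}$. The Sobolev-type inequality at the $a_i$ is inherited, and it holds trivially when the weights vanish. I would also recall briefly why the constant is $\kappa_sS$. Test with $\phi_\epsilon=\eta_\epsilon^{x_0}$ and use the trace inequality $\kappa_s S\|\Tr(\phi U)\|_{L^{2^*_s}}^2\leq\int t^{1-2s}|\nabla(\phi U)|^2$, which follows from \eqref{eq:sobolev} and the minimality of the extension. Apply it to $\phi u_n$, pass to the limit as $n\to\infty$ using \eqref{eq:claim_0_1}–\eqref{eq:claim_0_3} to kill the cross terms, and then let $\epsilon\to0$.

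For (iii) I would follow the standard Hardy concentration argument. Fix $i$ and set $v_n=u_n-u\weak0$ in $\Dext$. Then $\Tr v_n\to0$ in $L^2_{\mathrm{loc}}(\R^N)$, by the compact local embedding used in the preceding lemma. Let $\phi\in C_c^\infty(\R^N)$ with $\phi\geq0$ and $\supp\phi\cap\{a_i\}=\emptyset$. On $\supp\phi$ the weight $|x-a_i|^{-2s}$ is bounded, so $\int\phi\frac{u_n^2}{|x-a_i|^{2s}}\to\int\phi\frac{u^2}{|x-a_i|^{2s}}$ by strong $L^2_{\mathrm{loc}}$ convergence. Hence $\gamma_i-\frac{u^2}{|x-a_i|^{2s}}$ is supported in $\{a_i\}$. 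Next I check that this difference is a nonnegative measure. By the Brezis–Lieb-type splitting, $\frac{u_n^2}{|x-a_i|^{2s}}=\frac{v_n^2}{|x-a_i|^{2s}}+\frac{u^2}{|x-a_i|^{2s}}+2\frac{uv_n}{|x-a_i|^{2s}}$. The cross term tends to $0$ weakly in measures: $\frac{u\phi}{|x-a_i|^{2s}}$ defines a continuous functional on $\Ds$ by the fractional Hardy inequality, and $\Tr v_n\weak0$ in $\Ds$. Therefore $\gamma_i-\frac{u^2}{|x-a_i|^{2s}}$ is the weak limit of $\frac{v_n^2}{|x-a_i|^{2s}}\geq0$, which gives $\gamma_i=\frac{u^2}{|x-a_i|^{2s}}+\gamma_{a_i}\delta_{a_i}$ with $\gamma_{a_i}\geq0$.

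Finally I would confirm that $\frac{u^2}{|x-a_i|^{2s}}$ is a finite measure, which holds by the Hardy inequality since $\Tr u\in\Ds$. This makes (iii) an identity of finite measures.

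The only delicate point is the interplay between atoms of $\nu,\mu$ and the poles. It is handled by the relabeling above, so I expect no substantial obstacle. The step needing most care is justifying that the cross term vanishes in the weak-measure sense, using test functions $\phi\in C_0$ rather than compactly supported ones. I would handle this through tightness together with the Hardy bound, which gives uniform control of the tails.
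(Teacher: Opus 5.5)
Your proposal is correct and follows essentially the paper's route. (i)--(ii) and the bound at the $x_j$ come from \cite{Valdinoci-Dipierro}. The bound at $a_i$ is the same cut-off test with $\eta_\epsilon^{a_i}u_n$ and \eqref{eq:claim_0_1}--\eqref{eq:claim_0_3} that you sketch. (iii) comes from strong $L^2_{\mathrm{loc}}$ convergence of the traces away from $a_i$.

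Your splitting $u_n=u+v_n$ supplies the sign $\gamma_{a_i}\geq0$, which the paper leaves implicit. Its cross term vanishes for every bounded $\phi$ by the Hardy inequality alone, so tightness is not needed there.

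One small adjustment: take $\mu_{a_i}:=\mu(\{(a_i,0)\})$ rather than $0$ when $a_i$ is not an atom of $\nu$. This is exactly what the cut-off argument produces, and it is what is implicitly used later in \Cref{lemma:PS_c}.
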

\begin{proof}
	The proof of (i) and (ii) is given in \cite[Proposition 3.2.1]{Valdinoci-Dipierro}. In order to prove (iii), we first observe that from weak convergence in $\mathcal{D}_t^{1,2}(\overline{\R^{N+1}_+})$ we deduce strong convergence in $L^2_{\textup{loc}}(\R^N)$. Hence, if $x_0\in\R^N\setminus\{a_1,\dots,a_j\}$ and $r>0$ is such that $B_r(x_0)\sub\R^N\setminus\{a_1,\dots,a_k\}$, then 
	\begin{equation*}
		\frac{u_n^2}{|x-a_i|^{2s}}\to \frac{u^2}{|x-a_i|^{2s}}\quad\text{strongly in }L^2(B_r(x_0))~\text{as }n\to\infty,
	\end{equation*}
	for all $i=1,\dots,k$, from which we deduce that
	\begin{equation*}
		\gamma_i=\frac{u^2}{|x-a_i|^{2s}}\quad\text{in }\R^N\setminus\{a_1,\dots,a_k\},
	\end{equation*}
	thus implying that $\gamma_i$ can concentrate only at $a_i$, thus completing the proof. It remains to prove the first part of \eqref{eq:conc_comp_sobolev}.
	Let us fix $i \in \{1, \dots, k\}$. For any $\epsilon>0$, let $\eta_\epsilon^{a_i}$ be the cut-off function centered at $a_i$ as defined in \eqref{eq:eta_t}.
	By the fractional Sobolev inequality \eqref{eq:sobolev} combined with \eqref{eq:extension_weak}, we have
	\begin{equation}\label{eq:proof_conc_1}
		\kappa_s S \left( \int_{\R^N} |u_n \eta_\epsilon^{a_i}|^{2^*_s} \dx \right)^{\frac{2}{2^*_s}} \leq \int_{\R^{N+1}_+} t^{1-2s} |\nabla (u_n \eta_\epsilon^{a_i})|^2 \dxdt.
	\end{equation}
	Expanding the right-hand side, we obtain
	\begin{multline}\label{eq:proof_conc_2}
		\int_{\R^{N+1}_+} t^{1-2s} |\nabla (u_n \eta_\epsilon^{a_i})|^2 \dxdt = \int_{\R^{N+1}_+} t^{1-2s} |\nabla u_n|^2 |\eta_\epsilon^{a_i}|^2 \dxdt \\
		+ \int_{\R^{N+1}_+} t^{1-2s} u_n^2 |\nabla \eta_\epsilon^{a_i}|^2 \dxdt + 2 \int_{\R^{N+1}_+} t^{1-2s} u_n \eta_\epsilon^{a_i} \nabla u_n \cdot \nabla \eta_\epsilon^{a_i} \dxdt.
	\end{multline}
	We now pass to the limit by first taking $\limsup_{n \to \infty}$ and then $\lim_{\epsilon \to 0}$. 
	For the left-hand side of \eqref{eq:proof_conc_1}, the weak convergence of measures implies
	\begin{equation*}
		\lim_{\epsilon \to 0} \limsup_{n \to \infty} \kappa_s S \left( \int_{\R^N} |u_n \eta_\epsilon^{a_i}|^{2^*_s} \dx \right)^{\frac{2}{2^*_s}} = \kappa_s S \nu_{a_i}^{\frac{2}{2^*_s}}.
	\end{equation*}
	For the right-hand side expanded in \eqref{eq:proof_conc_2}, the second and third integrals vanish exactly in view of \eqref{eq:claim_0_1} and \eqref{eq:claim_0_3}. Consequently, the only surviving term is the first one, which gives
	\begin{equation*}
		\lim_{\epsilon \to 0} \limsup_{n \to \infty} \int_{\R^{N+1}_+} t^{1-2s} |\nabla u_n|^2 |\eta_\epsilon^{a_i}|^2 \dxdt = \lim_{\epsilon \to 0} \int_{\overline{\R^{N+1}_+}} |\eta_\epsilon^{a_i}|^2 \d\mu = \mu_{a_i}.
	\end{equation*}
	Combining these limits into \eqref{eq:proof_conc_1}, we definitively conclude that $\kappa_s S \nu_{a_i}^{\frac{2}{2^*_s}} \leq \mu_{a_i}$.
\end{proof}
We now here recall the definition of Palais-Smale sequence.
\begin{definition}
	We say that $\{u_n\}_n\sub \Dext$ is a \emph{Palais-Smale sequence for $J$ at level $c\in\R$} if
	\begin{enumerate}
		\item[1.] $J(u_n)\to c$ as $n\to\infty$,
		\item[2.] $J'(u_n)\to 0$ in $(\Dext)^*$ as $n\to\infty$.
	\end{enumerate}
\end{definition}

Before starting with the results of the present section, we state three remarks that are useful in the following.
\begin{remark}\label{rmk:tightness}
	One can easily observe that a sequence $\{u_n\}_n\sub\Dext$ is tight if and only if
	\begin{equation}\label{eq:tight_th}
		\lim_{R\to\infty}\limsup_{n \to \infty}\int_{\R^{N+1}_+\setminus B_R^+}t^{1-2s}
		\abs{\nabla u_n}^2\dxdt=0.
	\end{equation}
\end{remark}

\begin{remark}\label{rmk:cutoff}
	We recall the fact that, by means of basic computations and \eqref{eq:sobolev_ext} it follows that, if $u\in\Dext$ and $\phi\in C_c^\infty(\overline{\R^{N+1}_+})$, then $u\phi\in\Dext$ and 
\begin{multline*}
	\norm{u\phi}_{\Dext}^2\leq \bigg( \norm{\phi}_{L^\infty(\R^{N+1}_+)}^2+S_{\textup{e}}^{-1}\norm{\nabla \phi}_{L^\infty(\R^{N+1}_+)}^2 \\
	+\left(\int_{\supp \phi}t^{1-2s}\dxdt\right)^{\frac{2}{N+2-2s}} \bigg)\norm{u}_{\Dext}^2.
\end{multline*}
	In particular, if $\{u_n\}_n\sub\Dext$ is bounded, then so is $\{u_n\phi\}_n$, for any $\phi\in C_c^\infty(\overline{\R^{N+1}_+})$.
\end{remark}

\begin{remark}
	We explicit here the expression of the differential of $J$:
	\begin{equation*}
		\<J'(u),v\>=\int_{\R^{N+1}_+}t^{1-2s}\nabla u\cdot\nabla v\dxdt-\kappa_s\int_{\R^N}H u v\dx \\
		-\kappa_s S_{\LA}\int_{\R^N}\abs{ u}^{2^*_s-2} u v\dx
	\end{equation*}
	for $u,v\in\Dext$ and in particular
	\begin{equation*}\label{eq:J'u}
		\<J'(u),u\>=\int_{\R^{N+1}_+}t^{1-2s}\abs{\nabla u}^2\dxdt-\kappa_s\int_{\R^N}H u^2\dx-\kappa_s S_{\LA}\int_{\R^N}\abs{ u}^{2^*_s}\dx,
	\end{equation*}
	which implies the following relation
	\begin{equation}\label{eq:J'u_u}
		J(u)-\frac{1}{2}\<J'(u),u\>=\frac{s\kappa_s}{N}S_{\LA}\int_{\R^N}\abs{ u}^{2^*_s}.
	\end{equation}
\end{remark}

We now state the first result of this section, which establishes boundedness of Palais-Smale sequences.

\begin{lemma}\label{lemma:bounded}
	Let $\{u_n\}_n\sub\Dext$ be a Palais-Smale sequence for $J$ at level $c\in \R$.	Then $\{u_n\}_n$ is bounded in $\Dext$. In particular
	\begin{equation}\label{eq:bounded_th1}
		\<J'(u_n),u_n\>\to 0\quad\text{as }n\to\infty.
	\end{equation}
\end{lemma}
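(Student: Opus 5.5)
The plan is the usual Ambrosetti–Rabinowitz type argument, combined with the identity \eqref{eq:J'u_u} and the coercivity of the quadratic form. Throughout I assume, as in the standing setting, that $\mu_{\LA}>0$ (or at least $\mu_{\LA}\ge0$ together with $S_{\LA}>0$). First, from the Palais–Smale property I get the two facts $J(u_n)=c+o(1)$ and $|\<J'(u_n),u_n\>|\le \|J'(u_n)\|_{(\Dext)^*}\|u_n\|_{\Dext}=o(1)\|u_n\|_{\Dext}$. Inserting these into \eqref{eq:J'u_u} yields
\begin{equation*}
\frac{s\kappa_s}{N}S_{\LA}\int_{\R^N}|\Tr u_n|^{2^*_s}\dx\le c+1+o(1)\|u_n\|_{\Dext}.
\end{equation*}

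Next I bound the quadratic part from below. The extension identity gives $\kappa_s\|\Tr u_n\|^2_{\Ds}\le\|u_n\|^2_{\Dext}$, with equality for the harmonic extension. Since $\int_{\R^{N+1}_+}t^{1-2s}|\nabla u_n|^2\ge\kappa_s\|\Tr u_n\|_{\Ds}^2$, I can write
\begin{equation*}
\|u_n\|^2_{\Dext}-\kappa_s\int_{\R^N}H_{\LA}(\Tr u_n)^2\dx=\big(\|u_n\|^2_{\Dext}-\kappa_s\|\Tr u_n\|^2_{\Ds}\big)+\kappa_s Q_{\LA}(\Tr u_n).
\end{equation*}
The first bracket is nonnegative, and $\kappa_sQ_{\LA}(\Tr u_n)\ge\mu_{\LA}\kappa_s\|\Tr u_n\|^2_{\Ds}$. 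Taking a convex combination of the two sides, the left-hand side is bounded below by $\min\{1,\mu_{\LA}\}\|u_n\|^2_{\Dext}$. Concretely,
\begin{equation*}
A-B+\kappa_sQ\ge A-B+\mu B\ge \mu A, \qquad A=\|u_n\|^2_{\Dext},\ B=\kappa_s\|\Tr u_n\|^2_{\Ds},
\end{equation*}
using $\mu\le1$ and $A\ge B$. Then, writing $2J(u_n)$ as this quadratic part minus $\frac{2\kappa_s S_{\LA}}{2^*_s}\int|\Tr u_n|^{2^*_s}$, I get
\begin{equation*}
\mu_{\LA}\|u_n\|^2_{\Dext}\le 2c+o(1)+C\int_{\R^N}|\Tr u_n|^{2^*_s}\dx\le C'+o(1)\|u_n\|_{\Dext}.
\end{equation*}
A quadratic that is bounded by a linear term forces $\{u_n\}_n$ to be bounded. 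With boundedness in hand, \eqref{eq:bounded_th1} follows at once from $|\<J'(u_n),u_n\>|\le\|J'(u_n)\|\,\|u_n\|\to0$.

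The main obstacle is the coercivity step. It requires the positivity $\mu_{\LA}>0$ (hypothesis \eqref{eq:positivity}), together with the observation that an arbitrary $u_n$ is not necessarily the harmonic extension of its trace. This is handled by the decomposition above.

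If instead one only wants to assume $S_{\LA}>0$ and $\mu_{\LA}\ge0$, I would argue by contradiction. Normalise $w_n=u_n/\|u_n\|$; the two displayed relations then force $\int|\Tr w_n|^{2^*_s}\to0$, which makes the quadratic part of $w_n$ tend to zero. This does not by itself contradict $\|w_n\|=1$ unless $\mu_{\LA}>0$, so I will state the lemma under \eqref{eq:positivity}.
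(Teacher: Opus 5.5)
Your proof is correct and follows the same route as the paper. Both combine $J(u_n)$ and $\langle J'(u_n),u_n\rangle$ through \eqref{eq:J'u_u}, equivalently $\|u_n\|^2_{\Dext}-\kappa_s\int_{\R^N}H_{\LA}(\Tr u_n)^2\dx=\frac{N}{s}J(u_n)-\frac{N-2s}{2s}\langle J'(u_n),u_n\rangle$, then bound the left side below using \eqref{eq:positivity} and conclude from $|\langle J'(u_n),u_n\rangle|\le o(1)\|u_n\|_{\Dext}$. Your splitting $A-B+\kappa_s Q_{\LA}(\Tr u_n)\ge\min\{1,\mu_{\LA}\}A$ spells out the step the paper leaves to ``simple computations'': $u_n$ need not equal the extension $U_{\Tr u_n}$ of its trace, so $\mu_{\LA}$, which is defined on $\Ds$, cannot be applied to $\|u_n\|^2_{\Dext}$ without that decomposition.
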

\begin{proof}
	By definition of $\mu_{\LA}$ (see \eqref{eq:mu}) and simple computations one can see that
\begin{equation}\label{eq:bounded_1}
		\begin{aligned}
		\mu_{\LA}\norm{u_n}_{\Dext}^2 &\leq \norm{u_n}_{\Dext}^2-\kappa_s\int_{\R^N} Hu_n^2\dx \\
		&=\<J'(u_n),u_n\>+\kappa_s S_{\LA}\int_{\R^N}\abs{u_n}^{2^*_s}\dx \\
		&=\frac{N}{s}J(u_n)-\frac{N-2s}{2s}\<J'(u_n),u_n\>.
	\end{aligned}
\end{equation}
	On the other hand, by Cauchy-Schwarz's inequality we obtain that
	\[
		\abs{\<J'(u_n),u_n\>}\leq \norm{J'(u_n)}_{(\Dext)^*}\norm{u_n}_{\Dext}=\norm{u_n}_{\Dext}o(1),\quad\text{as }n\to\infty.
	\]
	This fact, in combination with \eqref{eq:bounded_1}, yields
	\[
		(\mu_{\LA}+o(1))\norm{u_n}_{\Dext}\leq \frac{Nc}{s}+o(1),\quad\text{as }n\to\infty,
	\]
	which, in view of \eqref{eq:positivity}, concludes the proof.
\end{proof}

Second, we provide sufficient conditions ensuring the tightness of Palais-Smale sequences.

\begin{lemma}\label{lemma:tight}
	Let $S_{\LA}$ be as in \eqref{eq:min_S_LA}, $\sigma_{\bm{\lambda}}$ as in \eqref{eq:sigma_lambda} and $S_{\sigma_{\bm{\lambda}}}$ as in \eqref{eq:S_lambda}. Let $\{u_n\}_n\sub\Dext$ be a Palais-Smale sequence for $J$ at level $c\in \R$.	We have that, if
	\begin{equation}\label{eq:tight_hp1}
		c< \frac{s\kappa_s}{N}S_{\LA}^{1-\frac{N}{2s}} \left(S_{\sigma_{\bm{\lambda}}}\right)^{\frac{N}{2s}},
	\end{equation}
	then $\{u_n\}_n$ is tight.
\end{lemma}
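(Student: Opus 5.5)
We argue by contradiction, following the classical scheme of Lions' concentration at infinity adapted to the extended setting. By \Cref{lemma:bounded}, $\{u_n\}_n$ is bounded in $\Dext$. Suppose it is not tight. Then, by \Cref{rmk:tightness}, there exist $\epsilon_0>0$ and a subsequence such that
\[
\int_{\R^{N+1}_+\setminus B_R^+}t^{1-2s}|\nabla u_n|^2\dxdt\geq\epsilon_0
\]
for all $R$ large and $n$ large. The plan is to show that this mass escaping to infinity carries at least the energy $\frac{s\kappa_s}{N}S_{\LA}^{1-\frac{N}{2s}}S_{\sigma_{\bm{\lambda}}}^{\frac{N}{2s}}$, which contradicts \eqref{eq:tight_hp1}.

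\emph{Step 1: localize at infinity.} Let $\psi_R:=1-\eta_R$, with $\eta_R$ as in \eqref{eq:eta}. Introduce the quantities at infinity
\begin{align*}
\mu_\infty&:=\lim_{R\to\infty}\limsup_{n\to\infty}\int t^{1-2s}|\nabla u_n|^2\psi_R^2,\\
\nu_\infty&:=\lim_{R\to\infty}\limsup_{n\to\infty}\int_{\R^N}|u_n|^{2^*_s}\psi_R^{2^*_s},\\
\gamma_\infty&:=\lim_{R\to\infty}\limsup_{n\to\infty}\int_{\R^N}\frac{u_n^2\psi_R^2}{|x|^{2s}}.
\end{align*}
All limits are taken along a common subsequence. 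For $|x|\geq R$ large, $|x-a_i|^{-2s}=|x|^{-2s}(1+o_R(1))$ uniformly, so
\[
\kappa_s\int H\,u_n^2\psi_R^2=\kappa_s\sigma_{\bm{\lambda}}\int\frac{u_n^2\psi_R^2}{|x|^{2s}}+o_R(1),
\]
uniformly in $n$ by boundedness and the Hardy inequality. The cross terms $\int t^{1-2s}u_n^2|\nabla\psi_R|^2$ and $\int t^{1-2s}u_n\psi_R\nabla u_n\cdot\nabla\psi_R$ vanish in the double limit. This is the analogue of \eqref{eq:claim_0_1}--\eqref{eq:claim_0_3} on the annulus $B_{2R}^+\setminus B_R^+$: use Hölder with exponents $\gamma,\gamma'$ and the fact that $\int_{B_{2R}^+\setminus B_R^+}t^{1-2s}|u|^{2\gamma}\to0$ for the weak limit $u$. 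This step requires a slightly more careful argument than in the local lemma, since $L^p_{\mathrm{loc}}$ convergence on the annulus holds for each fixed $R$ and the limit is then taken in $R$.

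\emph{Step 2: two inequalities.} Apply the definition \eqref{eq:S_lambda} of $S_{\sigma_{\bm{\lambda}}}$ to $\Tr(u_n\psi_R)$, together with $\|U_\varphi\|^2\le\|u_n\psi_R\|^2$ (extension minimality):
\[
\kappa_s S_{\sigma_{\bm{\lambda}}}\nu_\infty^{2/2^*_s}\leq\mu_\infty-\kappa_s\sigma_{\bm{\lambda}}\gamma_\infty.
\]
Next test $J'(u_n)$ with $u_n\psi_R^2$, which is admissible and bounded by \Cref{rmk:cutoff}. Since $J'(u_n)\to0$ and the cutoff errors vanish,
\[
\mu_\infty-\kappa_s\sigma_{\bm{\lambda}}\gamma_\infty=\kappa_s S_{\LA}\nu_\infty.
\]
Combining the two,
\[
S_{\sigma_{\bm{\lambda}}}\nu_\infty^{2/2^*_s}\leq S_{\LA}\nu_\infty.
\]
Hence either $\nu_\infty=0$ or $\nu_\infty\geq (S_{\sigma_{\bm{\lambda}}}/S_{\LA})^{N/2s}$.

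\emph{Step 3: conclude.} If $\nu_\infty=0$, the identity in Step 2 gives $\mu_\infty=\kappa_s\sigma_{\bm{\lambda}}\gamma_\infty$. Using $\mu_{\LA}>0$ in its localized form at infinity, i.e. $\mu_\infty-\kappa_s\sigma_{\bm{\lambda}}\gamma_\infty\geq c\,\mu_\infty$ for some $c>0$, we get $\mu_\infty=0$. Here $c>0$ follows because $\sigma_{\bm{\lambda}}<h_{N,s}$ by \Cref{thm:FMO1}, so Hardy's inequality gives $\kappa_s\sigma_{\bm{\lambda}}\gamma_\infty\le(\sigma_{\bm{\lambda}}^+/h_{N,s})\mu_\infty$. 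Then $\mu_\infty=0$ contradicts non-tightness, since $\mu_\infty\geq\epsilon_0$. Otherwise, by \eqref{eq:J'u_u},
\[
c=\lim_n\Bigl(J(u_n)-\tfrac12\langle J'(u_n),u_n\rangle\Bigr)=\frac{s\kappa_s}{N}S_{\LA}\lim_n\int|u_n|^{2^*_s}\geq\frac{s\kappa_s}{N}S_{\LA}\nu_\infty,
\]
and this is at least $\frac{s\kappa_s}{N}S_{\LA}^{1-\frac N{2s}}S_{\sigma_{\bm{\lambda}}}^{\frac N{2s}}$, contradicting \eqref{eq:tight_hp1}. Note that $S_{\LA}>0$ by $\mu_{\LA}>0$ and Sobolev. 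The main obstacle is the uniform control of the cutoff error terms at infinity in Step 1 (including the passage from $\|u_n\psi_R\|_{\Dext}$ to the trace norm) together with the replacement of $H_{\LA}$ by $\sigma_{\bm{\lambda}}|x|^{-2s}$.
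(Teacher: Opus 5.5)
Your proposal is correct and follows essentially the same route as the paper: the same quantities $\mu_\infty,\nu_\infty,\gamma_\infty$, the same two relations obtained by testing $S_{\sigma_{\bm{\lambda}}}$ and $J'(u_n)$ against the cut-off of $u_n$ at infinity, the same dichotomy for $\nu_\infty$, and then Hardy's inequality together with $\sigma_{\bm{\lambda}}<h_{N,s}$ (from \Cref{thm:FMO1}) to force $\mu_\infty=0$. The differences are cosmetic: you argue by contradiction along a common subsequence, which incidentally makes combining the $\limsup$'s fully rigorous, and you use Hardy's inequality rather than the $|x|^{-2s-1}$ H\"older estimate to replace $H_{\LA}$ by $\sigma_{\bm{\lambda}}|x|^{-2s}$.
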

\begin{proof}
	In view of \Cref{rmk:tightness}, we prove the tightness of $\{u_n\}_n$ by proving that $\mu_\infty=0$, where
	\[
		\mu_\infty:=\lim_{R\to\infty}\limsup_{n \to \infty}\int_{\R^{N+1}_+\setminus B_R^+}t^{1-2s}\abs{\nabla u_n}^2\dxdt=0.
	\] 
	We also denote
	\begin{equation*}
		\nu_\infty:=\lim_{R\to\infty}\limsup_{n \to \infty}\int_{\R^N\setminus B_R'}\abs{u_n}^{2^*_s}\dx, \qquad 
		\gamma_\infty:=\lim_{R\to\infty}\limsup_{n \to \infty} \int_{\R^N\setminus B_R'}\frac{u_n^2}{\abs{x}^{2s}}\dx.
	\end{equation*}
	First of all, from \eqref{eq:bounded_th1} and \eqref{eq:J'u_u} we know that, as $n\to +\infty$
	\begin{align*}
		c&=J(u_n)+o(1)=J(u_n)-\frac{1}{2}\<J'(u_n),u_n\>+o(1)  \\
		&=\frac{s\kappa_s}{N}S_{\LA}\int_{\R^N}\abs{u_n}^{2^*_s}\dx+o(1) \geq \frac{s\kappa_s}{N}S_{\LA}\int_{\R^N\setminus B_R'}\abs{u_n}^{2^*_s}\dx+o(1),
	\end{align*}
	for all $R>0$. By taking the limit superior as $n\to \infty$ and the limit as $R\to \infty$, we obtain that
	\begin{equation*}
		c\geq \frac{s\kappa_s}{N}S_{\LA}\nu_\infty
	\end{equation*}
	which, combined with \eqref{eq:tight_hp1}, yields
	\begin{equation}\label{eq:tight_1}
		\nu_\infty<\left(\frac{S_{\sigma_{\bm{\lambda}}}}{S_{\LA}}\right)^{\frac{N}{2s}}.
	\end{equation}
	We now claim that
	\begin{equation}\label{eq:tight_2}
		S_{\LA}\nu_\infty\geq S_{\sigma_{\bm{\lambda}}}\nu_\infty^{2/2^*_s}.
	\end{equation}
	 In order to obtain this, let us consider, for any $R>0$, the cut-off function $\xi_R=1-\eta_R$, where $\eta_R$ is as in \eqref{eq:eta}. As a consequence of trivial arguments, one can see that
	\begin{equation}\label{eq:tight_7}
		\begin{gathered}
			\lim_{R\to\infty}\limsup_{n \to \infty} \int_{\R^{N+1}_+}t^{1-2s}\xi_R^\alpha\abs{\nabla u_n}^2\dxdt=\mu_\infty, \\
					\nu_\infty=\lim_{R\to\infty}\limsup_{n \to \infty} \int_{\R^N}\xi_R^\alpha\abs{ u_n}^{2^*_s}\dx\quad\text{and}\quad \gamma_\infty=\lim_{R\to\infty}\limsup_{n \to \infty}\int_{\R^N}\frac{\xi_R^\alpha u_n^2}{\abs{x}^{2s}}\dx,
		\end{gathered}
	\end{equation}
	for all $\alpha\geq 1$. We now prove that
	\begin{equation}\label{eq:tight_3}
		\mu_\infty=\lim_{R\to\infty}\limsup_{n \to \infty}\int_{\R^{N+1}_+}t^{1-2s}\abs{\nabla (\xi_R u_n)}^2\dxdt.
	\end{equation}
	Indeed, we have that
	\begin{multline}\label{eq:tight_6}
		\int_{\R^{N+1}_+}t^{1-2s}\abs{\nabla (\xi_R u_n)}^2\dxdt= \int_{\R^{N+1}_+}t^{1-2s}\xi_R^2\abs{\nabla u_n}^2\dxdt \\+2\int_{\R^{N+1}_+}t^{1-2s}\xi_R u_n \nabla \xi_R\cdot \nabla u_n\dxdt+\int_{\R^{N+1}_+}t^{1-2s}u_n^2\abs{\nabla \xi_R}^2\dxdt,
	\end{multline}
	Since $\{u_n\}_n$ is bounded in $\Dext$ (see Lemma \ref{lemma:bounded}), there exists $u\in\Dext$ such that $u_n\to u$ in $L^p_{\textup{loc}}(\R^{N+1}_+)$ as $n\to\infty$, for any $p\in[1,2^*_s)$. Therefore
	\begin{equation}\label{eq:tight_4}
			\begin{aligned}
			\limsup_{n \to \infty} \int_{\R^{N+1}_+}t^{1-2s}u_n^2\abs{\nabla \xi_R}^2\dxdt&=\limsup_{n \to \infty}\int_{B_{2R}^+\setminus B_R^+}t^{1-2s}u_n^2\abs{\nabla \xi_R}^2\dxdt \\
			&=\int_{B_{2R}^+\setminus B_R^+}t^{1-2s}u^2\abs{\nabla \xi_R}^2\dxdt,
		\end{aligned}
	\end{equation}
	where we also used the fact that $\nabla \xi_R\in L^\infty(\R^{N+1}_+)$. Now, by H\"older's inequality and \eqref{eq:eta} we have that
	\begin{align*}
		&\int_{B_{2R}^+\setminus B_R^+}t^{1-2s} u_n^2\abs{\nabla \xi_R}^2\dxdt \\
		& \leq \left(\int_{B_{2R}^+\setminus B_R^+}t^{1-2s}\abs{u_n}^{2\gamma} \dxdt \right)^{\frac{1}{\gamma}}\left(\int_{B_{2R}^+\setminus B_R^+} t^{1-2s}\abs{\nabla \xi_R}^{N+2-2s}\dxdt \right)^{\frac{2}{N+2-2s}} \\
		&\leq  \left(\int_{B_{2R}^+\setminus B_R^+}t^{1-2s}\abs{u_n}^{2\gamma} \dxdt \right)^{\frac{1}{\gamma}}\left(\int_{B_2^+}t^{1-2s}\dxdt\right)^{\frac{2}{N+2-2s}}.
	\end{align*}
	This, together with \eqref{eq:tight_4} and Sobolev's inequality \eqref{eq:sobolev_ext}, implies that
	\begin{equation}\label{eq:tight_5}
		\lim_{R\to\infty}\limsup_{n \to \infty}\int_{\R^{N+1}_+}t^{1-2s}u_n^2\abs{\nabla \xi_R}^2\dxdt=0.
	\end{equation}
	Now, by Cauchy-Schwarz's inequality, one can observe that
	\begin{multline*}
		\abs{\int_{\R^{N+1}_+}t^{1-2s}\xi_R u_n\nabla \xi_R\cdot\nabla u_n\dxdt} \\
		\leq \left( \int_{\R^{N+1}_+}t^{1-2s}\xi_R^2\abs{\nabla u_n}^2\dxdt \right)^{\frac{1}{2}}\left(\int_{\R^{N+1}_+} t^{1-2s}u_n^2\abs{\nabla \xi_R}^2\dxdt \right)^{\frac{1}{2}}.
	\end{multline*}
	Hence, thanks to boundedness of $\{u_n\}_n$ and \eqref{eq:tight_5} there holds
	\begin{equation}\label{eq:tight_9}
		\lim_{R\to\infty}\limsup_{n \to \infty}\int_{\R^{N+1}_+}t^{1-2s}\xi_R u_n\nabla \xi_R\cdot\nabla u_n\dxdt=0.
	\end{equation}
	Consequently, by \eqref{eq:tight_6} we have that
	\begin{equation*}
		\lim_{R\to\infty}\limsup_{n \to \infty}\int_{\R^{N+1}_+}t^{1-2s}\abs{\nabla (\xi_R u_n)}^2\dxdt=\lim_{R\to\infty}\limsup_{n \to \infty} \int_{\R^{N+1}_+}t^{1-2s}\xi_R^2\abs{\nabla u_n}^2\dxdt.
	\end{equation*}
	Then, by taking into account \eqref{eq:tight_7}, \eqref{eq:tight_3} is proved. Now, since $\xi_R u_n\in\Dext$ for any $n\in\N$ and $R>0$, by definition of $S_{\sigma_{\bm{\lambda}}}$ we have
	\begin{equation*}
		\kappa_s S_{\sigma_{\bm{\lambda}}}\left(\int_{\R^N}\abs{\xi_R u_n}^{2^*_s}\dx \right)^{\frac{2}{2^*}}\leq \int_{\R^{N+1}_+}t^{1-2s}\abs{\nabla (\xi_R u_n)}^2\dxdt-\kappa_s\left(\sum_{i=1}^k\lambda_i\right)\int_{\R^N}\frac{(\xi_R u_n)^2}{\abs{x}^{2s}}\dx.
	\end{equation*}
	From this, \eqref{eq:tight_7} and \eqref{eq:tight_3} one can easily see that
	\begin{equation}\label{eq:tight_10}
		\kappa_s S_{\sigma_{\bm{\lambda}}}\nu_\infty^{\frac{2}{2^*_s}}\leq \mu_\infty-\kappa_s\left(\sum_{i=1}^k\lambda_i\right)\gamma_\infty.
	\end{equation}
	We now prove that
	\begin{multline}\label{eq:tight_8}
		\lim_{R\to\infty}\limsup_{n \to \infty}\sum_{i=1}^k\lambda_i\int_{\R^N}\frac{\xi_R u_n^2}{\abs{x-a_i}^{2s}}\dx\\
		=\lim_{R\to\infty}\limsup_{n \to \infty}\left(\sum_{i=1}^k \lambda_i\right)\int_{\R^N}\frac{\xi_R u_n^2}{\abs{x}^{2s}}\dx=\left(\sum_{i=1}^k \lambda_i\right)\gamma_\infty.
	\end{multline}
	Basically, this is a consequence of the fact that
	\[
		\frac{1}{\abs{x-a_i}^{2s}}\sim \frac{1}{\abs{x}^{2s}}\quad\text{as }\abs{x}\to \infty.
	\]
	More precisely, by explicit computations we derive that there exists a constant $C>0$ such that
	\[
		\abs{\frac{1}{\abs{x-a_i}^{2s}} -\frac{1}{\abs{x}^{2s}} }\leq \frac{C}{\abs{x}^{2s+1}}\quad\text{for all }\abs{x}\geq 1~\text{and all }i=1,\dots,k.
	\]
	Furthermore, by H\"older's inequality and boundedness of $\{u_n\}_n$ in $L^{2^*_s}(\R^N)$
	\[
		\int_{\R^N}\frac{\xi_R u_n^2}{\abs{x}^{2s+1}}\dx\leq \left(\int_{\R^N\setminus B_{2R}'}\abs{u_n}^{2^*_s}\dx\right)^{\frac{2}{2^*_s}}\left( \int_{\R^N\setminus B_{2R}'}\abs{x}^{-\frac{(2s+1)N}{2s}}\dx \right)^{\frac{2s}{N}}\to 0
	\]
	as $R\to \infty$, uniformly in $n$ and this implies \eqref{eq:tight_8}.	On the other hand, being $\{\xi_R u_n\}_n$ bounded in $\Dext$ (this easily follows from boundedness of $\{u_n\}_n$ and basic estimates), we have that
	\begin{align*}
		0&=\lim_{R\to\infty}\limsup_{n \to \infty} \<J'(u_n),\xi_R u_n\> \\
		&=\lim_{R\to\infty}\limsup_{n \to \infty} \bigg[ \int_{\R^{N+1}_+}t^{1-2s}u_n\nabla u_n\cdot\nabla \xi_R\dxdt+\int_{\R^{N+1}_+}t^{1-2s}\xi_R\abs{\nabla u_n}^2\dxdt, \\
		&\hspace{3cm} -\kappa_s\sum_{i=1}^k\lambda_i\int_{\R^N}\frac{\xi_R u_n^2}{\abs{x-a_i}^{2s}}\dx-\kappa_s S_{\LA}\int_{\R^N}\xi_R\abs{u_n}^{2^*_s}\dx\bigg].
	\end{align*}
	Hence, in view of \eqref{eq:tight_7}, \eqref{eq:tight_8} and the fact that
	\[
		\lim_{R\to\infty}\limsup_{n \to \infty} \int_{\R^{N+1}_+}t^{1-2s}u_n\nabla u_n\cdot\nabla \xi_R\dxdt=0,
	\]
	which is a trivial consequence of \eqref{eq:tight_9}, we obtain that
	\begin{equation}\label{eq:tight_11}
		\mu_\infty-\kappa_s \left(\sum_{i=1}^k\lambda_i\right)\gamma_\infty-\kappa_s S_{\LA}\nu_\infty=0.
	\end{equation}
	Then claim \eqref{eq:tight_2} follows by the previous identity and \eqref{eq:tight_10}.
	
	Now, one can observe that \eqref{eq:tight_1} and \eqref{eq:tight_2} readily imply that $\nu_\infty=0$. Furthermore, combining Hardy's inequality 
	\[
		\kappa_s h_{N,s}\int_{\R^N}\frac{\xi_R^2 u_n^2}{\abs{x}^{2s}}\dx\leq \int_{\R^{N+1}_+}t^{1-2s}\abs{\nabla (\xi_R u_n)}^2\dxdt
	\]
	with \eqref{eq:tight_7} and \eqref{eq:tight_3}, we derive that
	\begin{equation*}
		\kappa_s h_{N,s}\gamma_\infty\leq \mu_\infty.
	\end{equation*}
	This, in combination with \eqref{eq:tight_11}, implies that
	\[
		\left(h_{N,s}-\sum_{i=1}^k\lambda_i\right)\gamma_\infty\leq 0
	\]
	which, in view of \Cref{thm:FMO1} (remember that we are assuming $\mu_{\LA}>0$) tells us that $\gamma_\infty=0$. Finally, again from \eqref{eq:tight_11} we obtain that $\mu_\infty=0$ and the proof is complete.

\end{proof}

We conclude by stating the main result of the present section, which ensure relative compactness of Palais-Smale sequences, up to assuming the energy level stays below a certain threshold.

\begin{lemma}\label{lemma:PS_c}
	Let $\{u_n\}_n\sub\Dext$ be a Palais-Smale sequence for $J$ at level $c\in \R$.	We have that, if
	\begin{equation}\label{eq:ps_hp1}
		c< \frac{s\kappa_s}{N}S_{\LA}^{1-\frac{N}{2s}}\min \left\{S,S_{\lambda_1},\dots,S_{\lambda_k},S_{\sigma_{\bm{\lambda}}}\right\}^{\frac{N}{2s}},
	\end{equation}
	then $\{u_n\}_n$ has a convergent subsequence.
\end{lemma}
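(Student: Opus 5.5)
Since the threshold in \eqref{eq:ps_hp1} is at most the one in \eqref{eq:tight_hp1} (because $\min\{\dots\}\leq S_{\sigma_{\bm{\lambda}}}$), \Cref{lemma:bounded} and \Cref{lemma:tight} show that $\{u_n\}_n$ is bounded and tight. Passing to a subsequence, we have $u_n\weak u$ weakly in $\Dext$, and the measure convergences required by \Cref{thm:conc_comp} hold, with $\mu,\nu,\gamma_i$ decomposed as in (i)--(iii). The plan is to show that all atoms $\nu_{a_i}$ and $\nu_{x_j}$ vanish. Tightness then gives $\int_{\R^N}|u_n|^{2^*_s}\dx\to\int_{\R^N}|u|^{2^*_s}\dx$, so $u_n\to u$ in $L^{2^*_s}(\R^N)$, and from this we recover strong convergence in $\Dext$.

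\textbf{Step 1: atoms at regular points $x_j\notin\{a_1,\dots,a_k\}$.} We test $J'(u_n)$ with $\eta_\epsilon^{x_j}u_n$, which is bounded in $\Dext$ by \Cref{rmk:cutoff}, and let $n\to\infty$ and then $\epsilon\to0$. The cross terms vanish by \eqref{eq:claim_0_2}. The Hardy term vanishes because (iii) gives $\gamma_i$ no atom at $x_j$. This yields $\mu_{x_j}=\kappa_sS_{\LA}\nu_{x_j}$. Combining with \eqref{eq:conc_comp_sobolev}, either $\nu_{x_j}=0$ or $\nu_{x_j}\geq (S/S_{\LA})^{N/(2s)}$.

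\textbf{Step 2: atoms at the poles $a_i$.} The same test with $\eta_\epsilon^{a_i}u_n$ gives $\mu_{a_i}-\kappa_s\lambda_i\gamma_{a_i}=\kappa_sS_{\LA}\nu_{a_i}$, because near $a_i$ the other weights $|x-a_j|^{-2s}$, $j\neq i$, are bounded and therefore produce no atom. For the lower bound we apply the definition of $S_{\lambda_i}$ to the cut-off functions $\eta_\epsilon^{a_i}u_n$, translated so that $a_i$ sits at the origin. Using \eqref{eq:claim_0_1} and \eqref{eq:claim_0_3} as in the proof of \eqref{eq:conc_comp_sobolev}, we expect
\[
\kappa_sS_{\lambda_i}\nu_{a_i}^{2/2^*_s}\leq \mu_{a_i}-\kappa_s\lambda_i\gamma_{a_i}.
\]
Here we must check that $\int (\eta_\epsilon^{a_i})^2u_n^2|x-a_i|^{-2s}\dx\to\gamma_{a_i}$ in the double limit, using that $u^2|x-a_i|^{-2s}\in L^1$. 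Consequently $\nu_{a_i}=0$ or $\nu_{a_i}\geq (S_{\lambda_i}/S_{\LA})^{N/(2s)}$. This is the main technical point: the combination $\mu_{a_i}-\kappa_s\lambda_i\gamma_{a_i}$ must pass correctly to the limit, including when $\lambda_i<0$. When $\lambda_i<0$ we have $S_{\lambda_i}\geq S$, and the inequality still holds because the Hardy term then has a favorable sign.

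\textbf{Step 3: excluding atoms.} By \eqref{eq:J'u_u} and \eqref{eq:bounded_th1},
\[
c=\lim_n \frac{s\kappa_s}{N}S_{\LA}\int_{\R^N}|u_n|^{2^*_s}\dx=\frac{s\kappa_s}{N}S_{\LA}\,\nu(\R^N),
\]
where the second equality uses tightness, which prevents loss of mass at infinity. Suppose some atom is nonzero. Since $\nu\geq$ that atom, we get $c\geq \frac{s\kappa_s}{N}S_{\LA}^{1-\frac N{2s}}\min\{S,S_{\lambda_i}\}^{N/(2s)}$, which contradicts \eqref{eq:ps_hp1}. Hence $\nu=|u|^{2^*_s}$, and $\int_{\R^N}|u_n|^{2^*_s}\dx\to\int_{\R^N}|u|^{2^*_s}\dx$ (the total mass converges by tightness). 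Together with a.e. convergence (Brezis--Lieb), this gives $u_n\to u$ in $L^{2^*_s}(\R^N)$.

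\textbf{Step 4: strong convergence in $\Dext$.} Since $J'(u)=0$ by weak continuity, we consider $\langle J'(u_n)-J'(u),u_n-u\rangle\to0$. The critical term tends to zero by the $L^{2^*_s}$ convergence. What remains is $Q$-type control:
\[
\norm{u_n-u}^2_{\Dext}-\kappa_s\int_{\R^N}H_{\LA}(u_n-u)^2\dx\to0 .
\]
By \eqref{eq:positivity} the left-hand side is at least $\mu_{\LA}\norm{u_n-u}_{\Dext}^2$, so $u_n\to u$ in $\Dext$.
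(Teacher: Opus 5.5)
Your proposal is correct. Steps 1--3 are the paper's argument: the same tests of $J'(u_n)$ against $\eta_\epsilon^{x_j}u_n$ and $\eta_\epsilon^{a_i}u_n$, the same lower bounds from $S$ and $S_{\lambda_i}$ on the cut-off sequences, and the same contradiction with \eqref{eq:ps_hp1} via $c=\frac{s\kappa_s}{N}S_{\LA}\,\nu(\R^N)$.

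You differ in the finish. After reaching $u_n\to u$ in $L^{2^*_s}(\R^N)$, the paper deduces $\mu_{x_j}=0$ and $\mu_{a_i}=\kappa_s\lambda_i\gamma_{a_i}$. It then applies a localized Hardy inequality with $\lambda_i<h_{N,s}$ to get $\gamma_{a_i}=\mu_{a_i}=0$, and declares strong convergence. Item (i) of \Cref{thm:conc_comp} is only an inequality, so vanishing atoms of $\mu$ do not by themselves give $\norm{u_n}_{\Dext}\to\norm{u}_{\Dext}$. One still has to pass to the limit in $\langle J'(u_n),u_n\rangle\to 0$, using $\gamma_{a_i}=0$, the absence of Hardy mass at infinity from \Cref{lemma:tight}, and $J'(u)=0$; the paper leaves this step implicit.

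Your argument avoids all of this. You use $\langle J'(u_n)-J'(u),u_n-u\rangle\to 0$ together with the coercivity $\norm{U}_{\Dext}^2-\kappa_s\int_{\R^N}H_{\LA}(\Tr U)^2\dx\geq\mu_{\LA}\norm{U}_{\Dext}^2$. This is the inequality the paper already uses in \Cref{lemma:bounded}. It holds on all of $\Dext$ because $\norm{U}_{\Dext}^2\geq\kappa_s\norm{\Tr U}_{\Ds}^2$ and $\mu_{\LA}\leq 1$, as you see by translating a test function to infinity. It needs neither $\gamma_{a_i}=0$ nor any control of the diffuse part of the defect measure, only the standing assumption \eqref{eq:positivity}.

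The paper's route buys extra local information: no Hardy mass concentrates at the poles. It gets this from $\lambda_i<h_{N,s}$ alone, without the global positivity of $Q_{\LA}$.
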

\begin{proof}
	First of all we observe that, in view of \Cref{lemma:bounded}, the sequence $\{u_n\}_n$ is bounded; hence, up to a subsequence (still labeled as $\{u_n\}_n$), we have that
	\begin{align*}
		& u_n\weak u &&\text{weakly in }\Dext,~\text{in }L^{2\gamma}_{\textup{loc}}(\R^{N+1}_+,t^{1-2s})~\text{and in }L^{2^*_s}(\R^N) \\
		& u_n\to u &&\text{strongly in }L^p_{\textup{loc}}(\R^{N+1}_+,t^{1-2s}) ~\text{and in }L^q_{\textup{loc}}(\R^N)
	\end{align*}
	as $n\to\infty$, for all $p\in [1,2\gamma)$ and all $q\in [1,2^*_s)$, for some $u\in\Dext$. In addition, 
	\begin{align*}
		t^{1-2s}\abs{\nabla u_n}^2&\weak  \mu \quad\text{weakly in }\mathcal{M}(\overline{\R^{N+1}_+}), \\
		\abs{ u_n}^{2^*_s}&\weak \nu \quad\text{weakly in }\mathcal{M}(\R^N), \\
		\frac{ u_n^2}{\abs{x-a_i}^{2s}}&\weak \gamma_i\quad\text{weakly in }\mathcal{M}(\R^N),~\text{for all }i=1,\dots,k,
	\end{align*}
	as $n\to\infty$, for some $\mu\in\mathcal{M}(\overline{\R^{N+1}_+})$, $\nu\in\mathcal{M}(\R^N)$,  and $\{\gamma_i\}_{i=1,\dots,k}\sub\mathcal{M}(\R^N)$. Therefore, since the assumptions of \Cref{lemma:tight} are satisfied, the sequence $\{u_n\}_n$ is tight; hence, we are in position to use the concentration compactness principle, as stated in \Cref{thm:conc_comp}, from which we deduce that
	\begin{align*}
		&\mu\geq t^{1-2s}|\nabla u|^2\dxdt+\sum_{i=1}^k \mu_{a_i}\delta_{a_i}+\sum_{j\in \mathcal{J}}\mu_{x_j}\delta_{x_j}, \\
		&\nu =|u|^{2^*_s}\dx+\sum_{i=1}^k \nu_{a_i}\delta_{a_i}+\sum_{j\in \mathcal{J}}\nu_{x_j}\delta_{x_j}, \\
		&\gamma_i=\frac{u^2}{|x-a_i|^{2s}}\dx+\gamma_{a_i}\delta_{a_i},
	\end{align*}
	for some $\mu_{a_i},\nu_{a_i},\gamma_{a_i}\in\R$. In order to conclude the proof, we may prove that $\mu_{a_i}=\nu_{a_i}=\gamma_{a_i}=0$ for all $i=1,\dots,k$ and $\mu_{x_j}=\nu_{x_j}=0$ for all $j\in\mathcal{J}$. 
	
	We divide the rest of the proof into two other steps.
		
	\noindent\textbf{Step 1. } We claim that
	\begin{equation}\label{eq:ps_claim1}
		S_{\LA} \nu_{x_j}\geq S\nu_{x_j}^{\frac{2}{2^*_s}}\quad\text{for all }j\in\mathcal{J}.
	\end{equation}
	By assumption, we have that, for any $\epsilon>0$ and any $j\in\mathcal{J}$,
	\[
		\<J'(u_n),u_n\eta_\epsilon^{x_j}\>\to 0,\quad\text{as }n\to\infty,
	\]
	being $\{u_n\eta_\epsilon^{x_j}\}_n\sub\Dext$ a bounded sequence. Hence, for any $\epsilon>0$,
	\begin{multline}\label{eq:ps1}
		\lim_{n\to\infty}\Bigg[\int_{\R^{N+1}_+}t^{1-2s}\abs{\nabla u_n}^2|\eta_\epsilon^{x_j}|^2\dxdt+\int_{\R^{N+1}_+}t^{1-2s}u_n\nabla u_n\cdot\nabla \eta_\epsilon^{x_j}\dxdt \\
		-\kappa_s\sum_{i=1}^k\lambda_i\int_{\R^N}\frac{u_n^2\eta_\epsilon^{x_j}}{\abs{x-a_i}^{2s}}\dx-\kappa_s S_{\LA}\int_{\R^N}\abs{u_n}^{2^*_s}\eta_\epsilon^{x_j}\dx\Bigg]=0.
	\end{multline}
	In view of \Cref{thm:conc_comp} and the definition of $\eta_\epsilon^{x_j}$ we have that
	\begin{gather*}
		\lim_{\epsilon\to 0}\lim_{n\to\infty}\int_{\R^{N+1}_+}t^{1-2s}\abs{\nabla u_n}^2\eta_\epsilon^{x_j}\dxdt=\lim_{\epsilon\to 0}\int_{\R^{N+1}_+}\eta_\epsilon^{x_j}\d \mu\geq \mu_{x_j}, \\
		\lim_{\epsilon\to 0}\lim_{n\to\infty}\int_{\R^N}\abs{u_n}^{2^*_s}\eta_\epsilon^{x_j}\dx=\lim_{\epsilon\to 0}\int_{\R^N}\eta_\epsilon^{x_j}\d \nu=\nu_{x_j}
	\end{gather*}
	and, since $a_i\not\in \supp \eta_\epsilon^{x_j}$ for any $i=1,\dots,k$, for $\epsilon>0$ sufficiently small, also
	\[
		\lim_{\epsilon\to 0}\lim_{n\to\infty}\int_{\R^N}\frac{u_n^2\eta_\epsilon^{x_j}}{\abs{x-a_i}^{2s}}\dx=\lim_{\epsilon\to 0}\int_{\R^N}\frac{u^2 \eta_\epsilon^{x_j}}{\abs{x-a_i}^{2s}}\dx=0.
	\]
	Now, combining these facts and \eqref{eq:claim_0_2} with \eqref{eq:ps1} we obtain that
	\begin{equation}\label{eq:ps4}
		\mu_{x_j}\leq \kappa_s S_{\LA}\nu_{x_j}
	\end{equation}
	which, together with \eqref{eq:conc_comp_sobolev}, implies \eqref{eq:ps_claim1}.
	
	\noindent\textbf{Step 2.} We next claim that
	\begin{equation}\label{eq:ps_claim2}
		S_{\LA}\nu_{a_i}\geq S_{\lambda_i}\nu_{a_i}^{\frac{2}{2^*_s}}.
	\end{equation}
	The previous inequality comes as a consequence of the two following ones
	\begin{gather}
		\mu_{a_i}\geq \kappa_s(\lambda_i\gamma_{a_i}+S_{\lambda_i}\nu_{a_i}^{\frac{2}{2^*_s}}), \label{eq:ps2} \\
		\mu_{a_i} \leq \kappa_s(\lambda_i\gamma_{a_i}+S_{\LA}\nu_{a_i}), \label{eq:ps3}
	\end{gather}
	In order to prove \eqref{eq:ps2}, we first test the definition of $S_{\lambda_i,a_i}$ with $u_n\eta_\epsilon^{a_i}\in\mathcal{D}_t^{1,2}(\overline{\R^{N+1}_+})$, which gives
	\begin{multline*}
		\int_{\R^{N+1}_+}t^{1-2s}|\eta_\epsilon^{a_i}|^2|\nabla u_n|^2\dxdt+\int_{\R^{N+1}_+}t^{1-2s}|u_n|^2|\nabla \eta_\epsilon^{a_i}|^2\dxdt\\
		+2\int_{\R^{N+1}_+}t^{1-2s}\eta_\epsilon^{a_i}u_n\nabla \eta_\epsilon^{a_i}\cdot\nabla u_n\dxdt \geq \kappa_s\lambda_i\int_{\R^N}\frac{|\eta_\epsilon^{a_i}|^2|u_n|^2}{|x-a_i|^{2s}}\dx\\
		+\kappa_sS_{\lambda_i,a_i}\left(\int_{\R^N}|\eta_\epsilon^{a_i}u_n|^{2^*_s}\dx\right)^{\frac{2}{2^*_s}}.
	\end{multline*}
	Then, simply using the weak limit of $u_n$, the definition of $\eta_\epsilon^{a_i}$ and \eqref{eq:claim_0_1}, \eqref{eq:claim_0_2}, we get \eqref{eq:ps2}. Let us now prove \eqref{eq:ps3} by testing as follows
	\begin{equation*}
		\langle J'(u_n),u_n\eta_\epsilon^{a_i}\rangle\to 0\quad\text{as }n\to\infty.
	\end{equation*}
	If we expand this, we see that
	\begin{multline*}
		\lim_{n\to\infty}\int_{\R^{N+1}_+}t^{1-2s}|\nabla u_n|^2\eta_\epsilon^{a_i}\dxdt+\int_{\R^{N+1}_+}t^{1-2s}u_n\nabla u_n\cdot\nabla \eta_\epsilon^{a_i}\dxdt \\
		-\kappa_s\sum_{j=1}^k\lambda_j\int_{\R^N}\frac{u_n^2\eta_\epsilon^{a_i}}{|x-a_j|^{2s}}\dx-\kappa_s S_{\LA}\int_{\R^N}\eta_\epsilon^{a_i}|u_n|^{2^*_s}\dx=0.
	\end{multline*}
	Taking into account \eqref{eq:claim_0_2} and the fact that
	\begin{equation*}
		\lim_{\epsilon\to 0}\limsup_{n \to \infty}\int_{\R^N}\frac{u_n^2\eta_\epsilon^{a_i}}{|x-a_j|^{2s}}\dx=\begin{cases}
			0, &\text{if }i\neq j, \\
			\gamma_i,&\text{if }i=j,
		\end{cases}
	\end{equation*}
	together with the weak limit of $u_n$ in the various senses, we get \eqref{eq:ps3}. This also concludes the proof of \eqref{eq:ps_claim2}.
	
	\noindent\textbf{Conclusion.} By explicit computations (see \eqref{eq:J'u_u}), we have that
	\begin{equation*}
		c=\lim_{n\to\infty}J(u_n)=\lim_{n \to \infty}\left[J(u_n)-\frac{1}{2}\<J'(u_n),u_n\>\right]=\frac{s\kappa_s }{N}S_{\LA}\lim_{n \to \infty}\int_{\R^N}|u_n|^{2^*_s}\dx.
	\end{equation*}
	On the other hand, since $u_n\weak \nu$ as $n\to\infty$, thanks to the explicit form of $\nu$ and the tightness condition we have that
	\begin{equation}\label{eq:conc_1}
		c= \frac{s\kappa_s }{N}S_{\LA}\left(\int_{\R^N}|u|^{2^*_s}\dx+\sum_{j\in \mathcal{J}}\nu_{x_j} +\sum_{i=1}^k\nu_{a_i}\right).
	\end{equation}
	Let us now suppose by contradiction that $\nu_{x_j}>0$ for some $j\in\mathcal{J}$. By Step 1 \eqref{eq:ps_claim1}, we get that 
	\begin{equation*}
		\nu_{x_j}\geq \left(\frac{S}{S_{\LA}}\right)^{\frac{N}{2s}}
	\end{equation*}
	and then, plugging this into \eqref{eq:conc_1} and combining it with \eqref{eq:ps_hp1}, we find a contradiction. Hence, $\nu_{x_j}=0$ for all $j\in\mathcal{J}$. Reasoning analogously using Step 2 \eqref{eq:ps_claim2}, we get that $\nu_{a_i}=0$ for all $i=1,\dots,k$. In particular, we have that
	\begin{equation*}
		\int_{\R^N}|u_n|^{2^*_s}\dx\to \int_{\R^N}|u|^{2^*_s}\dx\quad\text{as }n\to\infty
	\end{equation*}
	and so $u_n\to u$ strongly in $L^{2^*_s}(\R^N)$, as $n\to\infty$. At this point, from \eqref{eq:ps4} we get that also $\mu_{x_j}=0$ for all $j\in\mathcal{J}$ while, from \eqref{eq:ps2}, \eqref{eq:ps3} we have that $\mu_{a_i}=\kappa_s\lambda_i\gamma_{a_i}$. By Hardy's inequality, we have that
	\begin{align*}
		\kappa_s h_{N,s}\int_{\R^N}\frac{|u_n|^2|\eta_\epsilon^{a_i}|^2}{|x-a_i|^{2s}}\dx &\leq \int_{\R^{N+1}_+}t^{1-2s}|\nabla (u_n\eta_\epsilon^{a_i})|^2\dxdt \\
		&=\int_{\R^{N+1}_+}t^{1-2s}|\eta_\epsilon^{a_i}|^2|\nabla u_n|^2\dxdt+\int_{\R^{N+1}_+}|u_n|^2|\nabla \eta_\epsilon^{a_i}|^2\dxdt \\
		&+2\int_{\R^{N+1}_+}t^{1-2s}u_n\eta_\epsilon^{a_i}\nabla u_n\cdot\nabla \eta_\epsilon^{a_i}\dxdt,
	\end{align*}
	from which, in view of Step 0, we derive that
	\begin{equation*}
		\kappa_s h_{N,s}\gamma_{a_i}\leq \mu_{a_i},
	\end{equation*}
	which then implies that $(h_{N,s}-\lambda_i)\gamma_{a_i}\leq 0$. On the other hand, by assumption $\lambda_i<h_{N,s}$, hence $\gamma_{a_i}=0$ and so also $\mu_{a_i}=0$. This implies that $u_n\to u$ strongly in $\mathcal{D}^{1,2}_t(\overline{\R^{N+1}_+})$, as $n\to\infty$, thus concluding the proof.
	
\end{proof}

\section{The case of a single pole}\label{sec:single}

A crucial role in our analysis is played by solutions to the following problem with a single pole (here assumed to be the origin):
\begin{equation}\label{eq:one_pole_pbm}
	\begin{bvp}
		(-\Delta)^s u-\frac{\lambda}{\abs{x}^{2s}}u&=u^{2^*_s-1}, &&\text{in }\R^N, \\
		u&> 0, && \text{in }\R^N,
	\end{bvp}
\end{equation}
to be intended in a weak sense, i.e. $u\in\Ds$, $u> 0$ a.e. and
\begin{equation*}
	\int_{\R^{N+1}_+}t^{1-2s}\nabla u\cdot\nabla\varphi\dxdt-\kappa_s\lambda\int_{\R^N}\frac{u\varphi}{|x|^{2s}}\dx=\kappa_s\int_{\R^N}|u|^{2^*_s-1}\varphi\dx\quad\text{for all }\varphi\in \mathcal{D}^{1,2}_t(\overline{\R^{N+1}_+}).
\end{equation*}
From \cite[Theorem 1.5]{Dipierro2016} we know that, for $\lambda\in [0,h_{N,s})$ this problem admits a solution, which we denote by $\phi^\lambda$. In particular, $\phi^\lambda$ is obtained as a suitable power of a minimizer of the problem
\begin{equation*}
	S_{\lambda}=\inf_{u\in\Ds\setminus\{0\}}\frac{\displaystyle\norm{u}_{\Ds}^2-\lambda\int_{\R^N}\frac{u^2}{\abs{x}^2}\dx}{\norm{u}_{L^{2^*_s}(\R^N)}^2}.
\end{equation*}
It is conjectured that the solution is unique, up to rescalings of the type
\begin{equation}\label{lambdamu}
		\phi^\lambda_\mu(x):=\mu^{-\frac{N-2s}{2}}\phi^\lambda\left(\frac{x}{\mu}\right),\qquad x\in\mathbb R^N\setminus\{0\}
\end{equation}
which are still solutions of \eqref{eq:one_pole_pbm} and minimizers of $S_\lambda$. To the best of our knowledge, this is still an open problem; however, knowing the existence of a solution  together with some decay properties (explained in the following), is sufficient to get our results. Moreover, for our subsequent needs, it is convenient to define the $L^{2^*_s}$-normalization of $\phi^\lambda_\mu$, i.e.
\begin{equation}\label{def:z_lambda}
	z^\lambda_\mu(x):=\frac{\phi^\lambda_\mu(x)}{\|\phi^\lambda_\mu\|_{L^{2^*_s}(\R^N)}}.
\end{equation}

\medskip

For any $\alpha\in\left( 0,\frac{N-2s}{2} \right)$ we define
\begin{equation}\label{eq:def_lambda_alpha}
	\Lambda(\alpha):=2^{2s}\frac{\Gamma\left(\frac{N+2s+2\alpha}{4}\right)\Gamma\left(\frac{N+2s-2\alpha}{4}\right)}{\Gamma\left(\frac{N-2s+2\alpha}{4}\right)\Gamma\left(\frac{N-2s-2\alpha}{4}\right)}.
\end{equation}
It is known (see e.g. \cite{Frank2008a} and  \cite[Proposition
2.3]{Fall2014}) that the map $\alpha\mapsto  \Lambda(\alpha)$ is
continuous and monotone decreasing. Moreover 
\begin{equation}\label{eq:8}
	\lim_{\alpha\to 0^+}\Lambda(\alpha)=h_{N,s},\qquad \lim_{\alpha\to \frac{N-2s}{2}}\Lambda(\alpha)=0.
\end{equation}
Therefore, the function
\[
\Lambda\colon \left[0,\frac{N-2s}{2}\right]\to [0,h_{N,s}]
\]
is invertible. Hereafter we define, for $\lambda\in[0,h_{N,s}]$,
\begin{equation}\label{eq:alpha}
	\alpha_\lambda:=\Lambda^{-1}(\lambda) \in\left[0,\frac{N-2s}{2}\right],
\end{equation}
which plays a crucial role in understanding the asymptotic behavior of solutions of \eqref{eq:one_pole_pbm}.

\medskip 

We start by stating the following result concerning the asymptotic behavior of solutions of \eqref{eq:one_pole_pbm} together with upper and lower bounds on the whole $\R^N$. Essentially, this comes as a consequence of \cite[Theorems 1.6 \& 1.7]{Dipierro2016} and \cite[Theorem 1.1]{Fall2014}.
\begin{lemma}\label{lemma:asy}
	Let $\phi\in\mathcal{D}^{s,2}(\R^N)$ be a weak solution of \eqref{eq:one_pole_pbm} with $\lambda>0$. Then $\phi$ is radial, radially decreasing and there exist $c_2\geq c_1>0$ such that
	\begin{equation}\label{twosided}
		\frac{c_1}{\left(\abs{x}^{1-\frac{2\alpha_\lambda}{N-2s}}(1+\abs{x}^{\frac{4\alpha_\lambda}{N-2s}})\right)^{\frac{N-2s}{2}}}\leq \phi(|x|)\leq \frac{c_2}{\left(\abs{x}^{1-\frac{2\alpha_\lambda}{N-2s}}(1+\abs{x}^{\frac{4\alpha_\lambda}{N-2s}})\right)^{\frac{N-2s}{2}}}
	\end{equation}
	for every $x\in \R^N\setminus\{0\}$. Moreover, there exists $\ell,\ell'\in[c_1,c_2]$ such that
	\begin{equation}\label{asy}
		\lim_{|x|\to+\infty}|x|^{\frac{N-2s}{2}+\alpha_\lambda}\,\phi(|x|)=\ell\qquad\text{and}\qquad\lim_{|x|\to 0}|x|^{\frac{N-2s}{2}-\alpha_\lambda}\,\phi(|x|)=\ell'	.
	\end{equation}
\end{lemma}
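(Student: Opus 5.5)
Radial symmetry, monotonicity and the two-sided bound \eqref{twosided} I would take directly from \cite[Theorems 1.6 \& 1.7]{Dipierro2016}. There it is shown that any positive weak solution of \eqref{eq:one_pole_pbm} with $0<\lambda<h_{N,s}$ is radially symmetric and decreasing about the origin. It also satisfies $\phi(x)\asymp |x|^{-(\frac{N-2s}{2}-\alpha_\lambda)}$ near $0$ and $\phi(x)\asymp|x|^{-(\frac{N-2s}{2}+\alpha_\lambda)}$ near infinity, with exactly the exponents $\frac{N-2s}{2}\mp\alpha_\lambda$ given by $\alpha_\lambda=\Lambda^{-1}(\lambda)$ as in \eqref{eq:alpha}. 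One checks by elementary algebra that the function on the left and right of \eqref{twosided} is comparable to $|x|^{\alpha_\lambda-\frac{N-2s}{2}}$ for $|x|\le1$ and to $|x|^{-\alpha_\lambda-\frac{N-2s}{2}}$ for $|x|\ge1$. Indeed,
\[
\Big(|x|^{1-\frac{2\alpha_\lambda}{N-2s}}(1+|x|^{\frac{4\alpha_\lambda}{N-2s}})\Big)^{\frac{N-2s}{2}}=|x|^{\frac{N-2s}{2}-\alpha_\lambda}\big(1+|x|^{\frac{4\alpha_\lambda}{N-2s}}\big)^{\frac{N-2s}{2}},
\]
and the last factor lies between $1$ and $2^{\frac{N-2s}{2}}$ for $|x|\le 1$ and behaves like $|x|^{2\alpha_\lambda}$ for $|x|\ge1$. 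Continuity and positivity of $\phi$ on compact annuli, which follow from regularity away from $0$, glue the near and far bounds into a global one with suitable $c_1\le c_2$.

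For the limit at the origin in \eqref{asy}, I would invoke \cite[Theorem 1.1]{Fall2014}. It states that $|x|^{\frac{N-2s}{2}-\alpha_\lambda}\phi(x)$ converges as $|x|\to0$ to a function on the sphere (a combination of spherical harmonics of lowest degree). Since $\phi$ is radial, this limit is a constant $\ell'$, and \eqref{twosided} forces $\ell'\in[c_1,c_2]$, after possibly enlarging $c_2$ and shrinking $c_1$.

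For the limit at infinity I would use the fractional Kelvin transform
\[
\tilde\phi(x):=|x|^{-(N-2s)}\phi\big(x/|x|^2\big).
\]
It is well known that $(-\Delta)^s\tilde\phi(x)=|x|^{-N-2s}\big((-\Delta)^s\phi\big)(x/|x|^2)$. The potential $\lambda|x|^{-2s}$ and the critical power $2^*_s-1$ are both invariant under this transform. Hence $\tilde\phi$ is again a positive radial weak solution of \eqref{eq:one_pole_pbm} in $\Ds$ (membership follows from the conformal invariance of the Gagliardo norm). Applying the previous step to $\tilde\phi$ gives $\lim_{|y|\to0}|y|^{\frac{N-2s}{2}-\alpha_\lambda}\tilde\phi(y)=\ell$. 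Setting $y=x/|x|^2$, this becomes $|x|^{\frac{N-2s}{2}+\alpha_\lambda}\phi(x)\to\ell$ as $|x|\to\infty$, and again $\ell\in[c_1,c_2]$ by \eqref{twosided}.

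The main obstacle is verifying that the hypotheses of \cite{Fall2014} and \cite{Dipierro2016} apply to our weak solution. Specifically, the potential $\lambda|x|^{-2s}+\phi^{2^*_s-2}$ must fit their framework near the origin, where $\phi^{2^*_s-2}$ is more singular than a bounded perturbation. My plan is to use the upper bound \eqref{twosided} first to show $\phi^{2^*_s-2}\lesssim|x|^{-2s+\delta}$ near $0$ for some $\delta>0$. This holds because $(2^*_s-2)(\frac{N-2s}{2}-\alpha_\lambda)=2s-\frac{4s\alpha_\lambda}{N-2s}<2s$. It places the nonlinear term in the class of lower-order perturbations treated in \cite[Theorem 1.1]{Fall2014}. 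The analogous check for $\tilde\phi$ is identical.
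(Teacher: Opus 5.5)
Your route is the paper's: \cite[Theorems 1.6 \& 1.7]{Dipierro2016} for symmetry, monotonicity and \eqref{twosided}, then \cite[Theorem 1.1]{Fall2014} at the origin, then the Kelvin transform (the paper cites \cite[Corollary 3.2]{FallWeth}) to pass to infinity. However, the step at the origin rests on a misreading of \cite[Theorem 1.1]{Fall2014}. That theorem does not say that $|x|^{\frac{N-2s}{2}-\alpha_\lambda}\phi(x)$ converges. It says there exist \emph{some} eigenvalue $\mu_k(\lambda)$ of the weighted eigenvalue problem on $\mathbb{S}^N_+$ (with the boundary condition involving $\kappa_s\lambda$) and an associated eigenfunction $\psi$ such that
\[
\lim_{\tau\to0}\tau^{\frac{N-2s}{2}-\gamma_k}\phi(\tau\theta')=\psi(\theta',0),\qquad \gamma_k:=\sqrt{\Big(\tfrac{N-2s}{2}\Big)^2+\mu_k(\lambda)} .
\]
Which $k$ occurs is not known a priori, and only $\gamma_1$ equals $\alpha_\lambda$. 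So you still have to show that the leading mode is the first one.

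This is exactly where the paper uses \eqref{twosided}. Radiality makes the limit a finite constant $\ell'$. If $\gamma_k>\alpha_\lambda$, the lower bound in \eqref{twosided} gives $\tau^{\frac{N-2s}{2}-\gamma_k}\phi(\tau)\geq c\,\tau^{\alpha_\lambda-\gamma_k}\to+\infty$ for some $c>0$, contradicting convergence. The case $\gamma_k<\alpha_\lambda$ is excluded because $\mu_k(\lambda)\geq\mu_1(\lambda)$. Hence $\gamma_k=\alpha_\lambda$, so $k=1$ by simplicity of $\mu_1$, and only then does $\ell'\in[c_1,c_2]$ follow. The same identification must be repeated for $\tilde\phi$ before you read off the limit at infinity, using the Kelvin-transformed two-sided bound. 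Without this step your argument assumes the exponent it needs; with it, your proof coincides with the paper's.

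Your last paragraph is not needed. \cite[Theorem 1.1]{Fall2014} already allows a nonlinear term with growth up to the critical power $2^*_s$, so $\phi^{2^*_s-1}$ fits directly. Recasting it as a linear potential $h=\phi^{2^*_s-2}$ with only the pointwise bound $|h|\lesssim|x|^{-2s+\delta}$ would probably not suffice anyway. The assumptions there on such an $h$ also constrain its derivative (a bound on $x\cdot\nabla h$), and the upper bound in \eqref{twosided} alone does not control that.
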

\begin{proof}
	The first part is precisely \cite[Theorems 1.6 \& 1.7]{Dipierro2016}, which we simply recalled. Hence, let us prove \eqref{asy}. Let us consider the eigenvalue problem
	\begin{equation*}
		\begin{bvp}
			-\mathrm{div}(\theta_{N+1}^{1-2s}\nabla_{\mathbb{S}^N}\psi)&=\mu\theta_{N+1}^{1-2s}\psi, &&\text{in }\mathbb{S}^N_+, \\
			\lim_{\theta_{N+1}\to 0^+}\theta_{N+1}^{1-2s}\nabla_{\mathbb{S}^N}\psi\cdot \bm{e}_{N+1}&=\kappa_s\lambda \psi, &&\text{on }\partial\mathbb{S}^N_+
		\end{bvp}
	\end{equation*}
	with $\mu\in\R$ being the eigenvalue and $\psi\in H^1(\mathbb{S}^N_+;\theta_{N+1}^{1-2s})\setminus\{0\}$ being a corresponding eigenfunction. By classical spectral theory (see e.g. \cite{Fall2014}), there exists a sequence of eigenvalues
	\begin{equation*}
		-\left(\frac{N-2s}{2}\right)^2<\mu_1(\lambda)<\mu_2(\lambda)\leq \cdots\leq \mu_k(\lambda)\leq \cdots\to+\infty.
	\end{equation*}
	In particular, we point out that the first eigenvalue is simple. From \cite[Theorem 1.1]{Fall2014}, we have that there exists $k\in\N$ and an eigenfunction $\psi\in H^1(\mathbb{S}^N_+;\theta_{N+1}^{1-2s})\setminus\{0\}$ corresponding to $\mu_k(\lambda)$ such that
	\begin{equation*}
		\lim_{\tau\to 0}\tau^{\frac{N-2s}{2}-\sqrt{\left(\frac{N-2s}{2}\right)^2+\mu_k(\lambda)}}\phi(\tau\theta')=\psi(\theta',0)\quad\text{for all }\theta\in\mathbb{S}^{N-1}=\partial \mathbb{S}^N_+.
	\end{equation*}
	On the other hand, since $\phi$ is radial and positive in $\R^N\setminus\{0\}$, then $\psi(\cdot,0)\equiv \ell'$, for some $\ell'>0$, i.e.
	\begin{equation}\label{eq:ff1}
		\lim_{\tau\to 0}\tau^{\frac{N-2s}{2}-\sqrt{\left(\frac{N-2s}{2}\right)^2+\mu_k(\lambda)}}\phi(\tau)=\ell'\quad\text{for all }\theta\in\mathbb{S}^{N-1}=\partial \mathbb{S}^N_+.
	\end{equation}
	We now claim that $k=1$. Indeed, by \eqref{twosided} we immediately deduce that
	\begin{equation}\label{eq:ff2}
		c_1\leq \liminf_{\tau \to 0}\tau^{\frac{N-2s}{2}-\alpha_\lambda}\phi(\tau)\leq \limsup_{\tau \to 0}\tau^{\frac{N-2s}{2}-\alpha_\lambda}\phi(\tau)\leq c_2.
	\end{equation}
	and this, in view of \eqref{eq:ff1}, forces
	\begin{equation*}
		\alpha_\lambda:=\sqrt{\left(\frac{N-2s}{2}\right)^2+\mu_1(\lambda)}=\sqrt{\left(\frac{N-2s}{2}\right)^2+\mu_k(\lambda)}
	\end{equation*}
	which holds if and only if $k=1$, by simplicity of the first eigenvalue. Hence, combining \eqref{eq:ff1} and \eqref{eq:ff2}, we obtain the second part of \eqref{asy}. In order to prove the first part, we first consider the $s$-fractional Kelvin transform
	\begin{equation*}
		K\colon \mathcal{D}^{s,2}(\R^N)\to \mathcal{D}^{s,2}(\R^N) 
	\end{equation*}
	defined, for $u\in\mathcal{D}^{s,2}(\R^N)$ as 
	\begin{equation*}
		Ku(x):=|x|^{2s-N}u\left(\frac{x}{|x|^2}\right).
	\end{equation*}
	From \cite[Corollary 3.2]{FallWeth} we deduce that $\tilde{\phi}:=K\phi$ solves \eqref{eq:one_pole_pbm}. We then reason analogously to the previous step, working with $\tilde{\phi}$ in place of $\phi$. From \cite[Theorem 1.1]{Fall2014} and the fact that $\tilde{\phi}$ is radial and positive, we get that there exists $k\in\N$  such that 
	\begin{equation}\label{eq:ff3}
		\lim_{\delta\to 0}\delta^{\frac{N-2s}{2}-\sqrt{\left(\frac{N-2s}{2}\right)^2+\mu_k(\lambda)}}\tilde{\phi}(\delta)=\ell,
	\end{equation}
	for some $\ell>0$. On the other hand, letting
	\begin{equation*}
		\phi(x)=|x|^{2s-N}\tilde{\phi}\left(\frac{x}{|x|^2}\right)
	\end{equation*}
	in \eqref{twosided}, and then $\delta=|x|^{-1}$, we deduce that
	\begin{equation*}
		\frac{c_1\delta^{2s-N}}{\left(\delta^{1-\frac{2\alpha_\lambda}{N-2s}}(1+\delta^{\frac{4\alpha_\lambda}{N-2s}})\right)^{\frac{N-2s}{2}}}\leq \tilde{\phi}(\delta)\leq \frac{c_2\delta^{2s-N}}{\left(\delta^{1-\frac{2\alpha_\lambda}{N-2s}}(1+\delta^{\frac{4\alpha_\lambda}{N-2s}})\right)^{\frac{N-2s}{2}}}.
	\end{equation*}
	Hence, we obtain that
	\begin{equation}\label{eq:ff4}
		c_1\leq \liminf_{\delta \to 0}\delta^{\frac{N-2s}{2}-\alpha_\lambda}\tilde{\phi}(\delta)\leq \limsup_{\delta \to 0}\delta^{\frac{N-2s}{2}-\alpha_\lambda}\tilde{\phi}(\delta)\leq c_2,
	\end{equation}
	which, combined with \eqref{eq:ff3}, implies that $k=1$ and that
	\begin{equation*}
		\lim_{\delta\to 0}\delta^{\frac{N-2s}{2}-\alpha_\lambda}\tilde{\phi}(\delta)=\ell\in[c_1,c_2].
	\end{equation*}
	Now, letting $\tilde{\phi}(\tau^{-1})=\tau^{N-2s}\phi(\tau)$, we prove the second part of \eqref{asy}, thus concluding the proof.
\end{proof}

Thanks to these properties, we obtain crucial asymptotic estimates for the Hardy norm of $(\phi^\lambda_\mu)^2$ as $\mu\to 0$.

\begin{lemma}\label{lemma:phi-est}
	Let $\lambda\in(0,h_{N,s})$. Let $\phi^\lambda\in\mathcal{D}^{s,2}(\R^N)$ be a weak solution of \eqref{eq:one_pole_pbm} and, for $\mu>0$, let $\phi^\lambda_\mu$ be as in \eqref{lambdamu}. Then, for any $\xi \in \R^N\setminus\{0\}$, we have that the following holds as $\mu\to 0$:
	\begin{align}
		&\int_{\mathbb R^N}\frac{\phi^\lambda_\mu(x)^2}{|x+\xi|^{2s}}\,dx=\ell^2\left(\int_{\mathbb R^N}\frac{|x|^{2s-N-2\alpha_\lambda}}{|x+\bm{e}_1|^{2s}}\,dx\right)\,\frac{\mu^{2\alpha_\lambda}}{|\xi|^{2\alpha_\lambda}}+o(\mu^{2\alpha_\lambda})&&\mbox{if $s>\alpha_\lambda$}, \label{case1} \\
		&\int_{\mathbb R^N}\frac{\phi^\lambda_\mu(x)^2}{|x+\xi|^{2s}}\,dx={\sigma_{N-1}\,\ell^2}\,\frac{\mu^{2s}|\log\mu|}{|\xi|^{2s}}+o(\mu^{2s}|\log\mu|)&&\mbox{if $s=\alpha_\lambda$},\label{case3}\\
		& \int_{\mathbb R^N}\frac{\phi^\lambda_\mu(x)^2}{|x+\xi|^{2s}}\,dx=\frac{\mu^{2s}}{|\xi|^{2s}}\int_{\mathbb R^N}\phi^\lambda(x)^2\,dx+o(\mu^{2s})&&\mbox{if $s<\alpha_\lambda$},\label{case2} 
	\end{align}
	where $\bm{e}_1:=(1,0,\dots,0)$ and
	\begin{equation*}
		\sigma_{N-1}:=\mathcal{H}^{N-1}(\partial B_1).
	\end{equation*}
\end{lemma}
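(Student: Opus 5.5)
The plan is to change variables and use the sharp asymptotics of \Cref{lemma:asy}. By \eqref{lambdamu}, substituting $x=\mu y$ gives
\[
\int_{\R^N}\frac{\phi^\lambda_\mu(x)^2}{|x+\xi|^{2s}}\dx=\mu^{2s}\int_{\R^N}\frac{\phi^\lambda(y)^2}{|\mu y+\xi|^{2s}}\,\mathrm{d}y .
\]
As $\mu\to0$ the weight $|\mu y+\xi|^{-2s}$ tends to $|\xi|^{-2s}$ pointwise. Whether this limit can be taken inside the integral depends on the integrability of $(\phi^\lambda)^2$.

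By \eqref{twosided}, $\phi^\lambda(y)^2\approx |y|^{-(N-2s)-2\alpha_\lambda}$ at infinity and $\approx|y|^{-(N-2s)+2\alpha_\lambda}$ near $0$. Hence $\phi^\lambda\in L^2(\R^N)$ exactly when $2s<2\alpha_\lambda$, i.e. $s<\alpha_\lambda$.

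\textbf{Case $s<\alpha_\lambda$.} I split the $y$-integral into three regions.
\begin{itemize}
\item On $\{|\mu y+\xi|\ge|\xi|/2\}$, the integrand is dominated by $2^{2s}|\xi|^{-2s}(\phi^\lambda)^2\in L^1$, so dominated convergence gives $|\xi|^{-2s}\int(\phi^\lambda)^2$.
\item On the complementary region, $|y|\approx|\xi|/\mu$. The decay bound gives $\phi^\lambda(y)^2\lesssim(\mu/|\xi|)^{N-2s+2\alpha_\lambda}$ there. The weight integrates to $\int_{|\mu y+\xi|<|\xi|/2}|\mu y+\xi|^{-2s}\,\mathrm{d}y=\mu^{-N}C|\xi|^{N-2s}$.
\item Multiplying, this remainder is $O(\mu^{2\alpha_\lambda-2s})\to0$.
\end{itemize}
This yields \eqref{case2}.

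\textbf{Case $s>\alpha_\lambda$.} Now the mass sits at scale $|y|\sim1/\mu$, so I instead substitute $x=|\xi|\,w$ in the original integral and rotate so that $\xi=|\xi|\bm e_1$. Using \eqref{asy}, $\phi^\lambda_\mu(x)^2=\mu^{-(N-2s)}\phi^\lambda(x/\mu)^2$, and for fixed $x\neq0$,
\[
\phi^\lambda_\mu(x)^2\,\mu^{-2\alpha_\lambda}\to \ell^2|x|^{-(N-2s)-2\alpha_\lambda}.
\]
For dominated convergence, \eqref{twosided} gives $\phi^\lambda_\mu(x)^2\mu^{-2\alpha_\lambda}\le c_2^2|x|^{-(N-2s)-2\alpha_\lambda}$ for all $x$, since the lower-order factor $(1+|x/\mu|^{\cdot})$ only helps. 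The majorant $|x|^{2s-N-2\alpha_\lambda}|x+\xi|^{-2s}$ is integrable:
\begin{itemize}
\item near $0$ because $2\alpha_\lambda<2s$;
\item near $-\xi$ because $2s<N$;
\item at infinity because the decay is $|x|^{-N-2\alpha_\lambda}$.
\end{itemize}
So the limit is $\ell^2\int|x|^{2s-N-2\alpha_\lambda}|x+\xi|^{-2s}\dx$. Scaling $x=|\xi|w$ gives $|\xi|^{-2\alpha_\lambda}\int|w|^{2s-N-2\alpha_\lambda}|w+\bm e_1|^{-2s}\,\mathrm{d}w$, which is \eqref{case1}.

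\textbf{Case $s=\alpha_\lambda$.} This borderline case is the main obstacle, because the logarithm must be extracted. In the $y$-variables the relevant integral is
\[
\mu^{2s}\int\phi^\lambda(y)^2|\mu y+\xi|^{-2s}\,\mathrm{d}y,
\qquad \phi^\lambda(y)^2\sim\ell^2|y|^{-N}\ \text{at infinity}.
\]
I split it into three regions.
\begin{itemize}
\item \textbf{$\{|y|\le R\}$ for fixed $R$:} this contributes $O(\mu^{2s})$.
\item \textbf{$\{R\le|y|\le\delta/\mu\}$:} here the weight equals $|\xi|^{-2s}(1+O(\delta))$. The integrand is $(\ell^2+o_R(1))|y|^{-N}$, giving $\sigma_{N-1}\ell^2|\xi|^{-2s}\big(|\log\mu|+O(1)\big)(1+o(1))$.
\item \textbf{$\{|y|\ge\delta/\mu\}$:} the bound $\phi^\lambda(y)^2\lesssim|y|^{-N}$ and the scaling $y=z/\mu$ give $O_\delta(1)$. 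This uses local integrability of $|z+\xi|^{-2s}$ and $\int_{|z|>\delta}|z|^{-N}|z+\xi|^{-2s}<\infty$.
\end{itemize}
Dividing by $\mu^{2s}|\log\mu|$, letting $\mu\to0$, then $R\to\infty$ and $\delta\to0$, yields \eqref{case3}. The care needed is only in ordering these limits so that the $o(1)$ errors in $\ell$ and in the weight are genuinely negligible compared to $|\log\mu|$.
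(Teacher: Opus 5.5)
Your proof is correct. For $s>\alpha_\lambda$ it coincides with the paper's argument: the pointwise limit comes from \eqref{asy}, domination by $c_2^2|x|^{2s-N-2\alpha_\lambda}|x+\xi|^{-2s}$ comes from \eqref{twosided}, and the conclusion follows from homogeneity of degree $-2\alpha_\lambda$. In the other two cases you follow the same overall strategy but with different decompositions.

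For $s<\alpha_\lambda$, the paper splits $y$-space into $B_{|\xi|/(2\mu)}$ and its exterior. It then has to control the exterior remainder $R_\xi(\mu)$ by a further splitting into the sets $F_\xi$ and $G_\xi$. You split instead according to whether $|\mu y+\xi|\ge|\xi|/2$. Your bad set is then just the ball of radius $|\xi|/(2\mu)$ centred at $-\xi/\mu$, and a single estimate gives $O(\mu^{2\alpha_\lambda-2s})$.

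For $s=\alpha_\lambda$, the paper freezes the weight at $|\xi|^{-2s}$ on all of $B_{|\xi|/(2\mu)}$. It shows via a Taylor bound that the freezing error is $O(1)$, and extracts the logarithm from $\int_{B_{|\xi|/(2\mu)}}(\phi^\lambda)^2$ by L'H\^opital's rule, which needs $\phi^\lambda\notin L^2$ from the lower bound in \eqref{twosided}. You cut instead at $|y|=\delta/\mu$, where the weight is $|\xi|^{-2s}(1+O(\delta))$. You then integrate $(\ell^2+o_R(1))|y|^{-N}$ explicitly on the annulus $R\le|y|\le\delta/\mu$ and close with iterated limits: $\mu\to0$ first, then $R\to\infty$ and $\delta\to0$.

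The paper's route gives quantitative $O(\mu^{2s})$ control of the weight-freezing error. Yours is softer, since everything is only $o(\mu^{2s}|\log\mu|)$, but it avoids both the Taylor computation and L'H\^opital. The resulting asymptotics are identical.
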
	
\begin{proof}
	First of all we observe that, thanks to \eqref{asy}, we deduce
	\begin{equation}\label{pointwise}
		\lim_{\mu\to 0}\mu^{2s-N-2\alpha_\lambda}\,\phi^\lambda\left(\frac{x}{\mu}\right)^2=\ell^2|x|^{2s-N-2\alpha_\lambda}\quad\text{for every $x\neq0$}.
	\end{equation}
	Suppose first that $s>\alpha_\lambda$. From \eqref{lambdamu} we have $\mu^{-2\alpha_\lambda}\, \phi^\lambda_\mu(x)^2=\mu^{2s-N-2\alpha_\lambda}\,\phi^\lambda(x/\mu)^2$, therefore thanks to \eqref{pointwise} we have the pointwise convergence
	\[
	\mu^{-2\alpha_\lambda}\,\phi^\lambda_\mu(x)^2\to \ell^2|x|^{2s-N-2\alpha_\lambda}
	\]
	as $\mu\to 0$, for every $x\in\mathbb R^N\setminus\{0\}$. Furthermore, \eqref{lambdamu} and \eqref{twosided} imply that for every $x\in\mathbb R^{N}\setminus\{0,-\xi\}$ and every $\mu>0$
	\[
	\frac{\mu^{-2\alpha_\lambda}\,\phi^\lambda_\mu(x)^2}{|x+\xi|^{2s}}\le \frac{c_2^2 |x|^{2s-N-2\alpha_\lambda}}{|x+\xi|^{2s}},
	\]
	where the right hand side belongs to $L^1(\mathbb R^N)$ since $N/2>s>\alpha_\lambda$. By dominated convergence we conclude that
	\[
	\mu^{-2\alpha_\lambda}\int_{\mathbb R^N}\frac{\phi^\lambda_\mu(x)^2}{|x+\xi|^{2s}}\,dx=\ell^2\int_{\mathbb R^N}\frac{|x|^{2s-N-2\alpha_\lambda}}{|x+\xi|^{2s}}\,dx+o(1)
	\]
	as $\mu\to0$.
	It is easy to check that the function $\Theta: \mathbb R^N \setminus \{0\} \to \mathbb R$ defined as
	\[
	\Theta(\xi):=\int_{\mathbb R^N}\frac{|x|^{2s-N-2\alpha_\lambda}}{|x+\xi|^{2s}}\,dx
	\]
	satisfies $\Theta(r\xi) = r^{-2\alpha_\lambda} \Theta(\xi)$ and is rotationally invariant. Therefore $\Theta(\xi)= |\xi|^{-2\alpha_\lambda} \Theta(\bm{e}_1)$ and \eqref{case1} follows.

	Suppose next that $\alpha_\lambda\ge s$. By a change of variables, we can write
	\begin{align}
		\mu^{-2s}\int_{\mathbb R^N}\frac{\phi^\lambda_\mu(x)^2}{|x+\xi|^{2s}}\,dx&=\int_{B_{\frac{|\xi|}{2\mu}}}\,\frac{\phi^\lambda(x)^2}{|\mu x+\xi|^{2s}}\,dx+\int_{\mathbb R^N\setminus B_{\frac{|\xi|}{2\mu}}}\frac{\phi^\lambda(x)^2}{|\mu x+\xi|^{2s}}\,dx \notag \\
		&=\int_{B_{\frac{|\xi|}{2\mu}}}\frac{\phi^\lambda(x)^2}{|\mu x+\xi|^{2s}}\,dx+R_\xi(\mu),\label{split}
	\end{align}
	with
	\begin{equation*}
		R_\xi(\mu):=\int_{\mathbb R^N\setminus B_{\frac{|\xi|}{2\mu}}}\frac{\phi^\lambda(x)^2}{|\mu x+\xi|^{2s}}\,dx.
	\end{equation*}
	We now want to bound the reminder term $R_\xi(\mu)$. We first let
	\begin{align*}
		&A_{\xi}:=\{x\in\mathbb R^N:|x-\xi|\ge |\xi|/2\} \\
		&F_{\xi}:=A_{\xi}\cap\{x\in\mathbb R^N: |x|<2|\xi|\} \\
		&G_{\xi}:=A_{\xi}\cap\{x\in\mathbb R^N: |x|\ge 2|\xi|\}.
	\end{align*}
	We then perform a change of variable $\mu x+\xi\mapsto x$, we use \eqref{twosided} and we notice that on $G_{\xi}$ there holds $|x-\xi|\ge|x|/2$, thus obtaining
	\begin{align}
		R_\xi(\mu)&=\mu^{-N}\int_{A_{\xi}}\frac{\phi^\lambda((x-\xi)/\mu)^2}{|x|^{2s}}\,dx \notag\\
		&\le c_2^2\,\mu^{2\alpha_\lambda-2s}\int_{A_{\xi}}\frac1{|x|^{2s}\,|x-\xi|^{N-2s+2\alpha_\lambda}}\,dx\notag\\
		&=c_2^2\,\mu^{2\alpha_\lambda-2s}\left(\int_{G_{\xi}}\frac1{|x|^{2s}\,|x-\xi|^{N-2s+2\alpha_\lambda}}\,dx+\int_{F_{\xi}}
		\frac1{|x|^{2s}\,|x-\xi|^{N-2s+2\alpha_\lambda}}\,dx\right)\notag\\
		&\le c_2^2\,\mu^{2\alpha_\lambda-2s}\left(\int_{G_{\xi}}\frac{1}{|x|^{2s}\,|x/2|^{N-2s+2\alpha_\lambda}}\,dx+\int_{F_{\xi}}
		\frac{1}{|x|^{2s}\,|\xi/2|^{N-2s+2\alpha_\lambda}}\,dx\right)\notag\\
		&\le C_{N,s,\lambda}\,\mu^{2\alpha_\lambda-2s}\left(\int_{\mathbb R^N\setminus B_{2|\xi|}}\frac{1}{|x|^{N+2\alpha_\lambda}}\,dx+\frac{1}{|\xi|^{N-2s+2\alpha_\lambda}}\int_{B_{2|\xi|}}
		\frac{1}{|x|^{2s}}\,dx\right),\label{longestimate1} 
	\end{align}
	where, in the last step we used the fact that $G_\xi\sub\R^N\setminus B_{2|\xi|}$ and that $F_\xi\sub B_{2|\xi|}$. In particular, from \eqref{longestimate1} we deduce that
	\begin{equation}\label{longestimate} 
		R_\xi(\mu)=\begin{cases}
			o(1),&\text{if }\alpha_\lambda>s, \\
			O(1),&\text{if }\alpha_\lambda=s
		\end{cases}\quad\text{as }\mu\to 0.
	\end{equation}
	
	Suppose now that $\alpha_\lambda>s$. Notice that in this case \eqref{twosided} implies $\phi^\lambda\in L^2(\mathbb R^N)$. The first integral on the right hand side of \eqref{split} converges as $\mu \to 0$, i.e.
	\begin{equation}\label{eq:lemma1}
		\int_{B_{\frac{|\xi|}{2\mu}}}\frac{\phi^\lambda(x)^2}{|\mu x+\xi|^{2s}}\,dx=	\int_{\R^N}\frac{\phi^\lambda(x)^2}{|\mu x+\xi|^{2s}}\,\chi_{B_{\frac{|\xi|}{2\mu}}}\,dx\to \frac{1}{|\xi|^{2s}}\int_{\mathbb R^N}\phi^\lambda(x)^2\,dx\quad\text{as }\mu \to 0.
	\end{equation}
	This follows by dominated convergence, since we have pointwise convergence and the domination
	\begin{equation*}
		\frac{\phi^\lambda(x)^2}{|\mu x+\xi|^{2s}}\,\chi_{B_{\frac{|\xi|}{2\mu}}}\leq 	\frac{\phi^\lambda(x)^2}{(|\xi|/2)^{2s}}\in L^1(\mathbb R^N)
	\end{equation*}
	Now, combining \eqref{split} with \eqref{eq:lemma1} and \eqref{longestimate}, we get that
	\[\mu^{-2s}\int_{\mathbb R^N}\frac{\phi^\lambda_\mu(x)^2}{|x+\xi|^{2s}}\,dx=|\xi|^{-2s}\int_{\mathbb R^N}\phi^\lambda(x)^2\,dx+o(1)\]
	as $\mu\to 0$, so that \eqref{case2} follows.
	
	Finally, assume that $\alpha_\lambda=s$, so that, by \eqref{twosided} we have $\phi^\lambda\in L^2_{\textup{loc}}(\mathbb R^N)$. In this case, we again start with \eqref{split}, writing
	\begin{multline}
		\mu^{-2s}\int_{\mathbb R^N}\frac{\phi^\lambda_\mu(x)^2}{|x+\xi|^{2s}}\,dx \\
		=\frac{1}{|\xi|^{2s}}\int_{B_{\frac{|\xi|}{2\mu}}}\phi^\lambda(x)^2\,dx+\int_{B_{\frac{|\xi|}{2\mu}}}\left(\frac1{|\mu x+\xi|^{2s}}-\frac1{|\xi|^{2s}}\right)\phi^\lambda(x)^2\,dx+R_\xi(\mu),\label{split1}
	\end{multline}
	and we analyze the asymptotic behavior of each term of the right-hand side as $\mu\to 0$. Starting with the first, by using radiality of $\phi^\lambda$ we that
	\[
	\int_{B_{\frac{|\xi|}{2\mu}}}\phi^\lambda(x)^2\,dx=\sigma_{N-1}\int_0^{\frac{|\xi|}{2\mu}}\phi^\lambda(r)^2 r^{N-1}\,dr
	\]
	and the estimate from below in \eqref{twosided} implies $\phi^\lambda\notin L^2(\mathbb R^N)$,  so that by De L'H\"opital rule and by \eqref{asy} we have
	\begin{equation}\label{limitlog}
		\lim_{\mu\to0} \frac{\int_{B_{\frac{|\xi|}{2\mu}}}\phi^\lambda(x)^2\,dx}{-\log\mu}=\sigma_{N-1}\lim_{\mu\to 0}\frac{\phi^\lambda\!\left(\tfrac{|\xi|}{2\mu}\right)^2\left(\tfrac{|\xi|}{2\mu}\right)^{N-1}\tfrac{|\xi|}{2\mu^2}}{\tfrac1\mu}=\sigma_{N-1}\,\ell^2.
	\end{equation}
	Let us now pass to the second term in the right-hand side of \eqref{split1}. By the Taylor's formula, if $y\geq 0$, then
	\begin{equation*}
		|(1+y)^s-1|\leq s\sup_{z\in(0,y)}|1+z|^{s-1}|y|\leq s|y|,
	\end{equation*}
	while, if $-1<-c\leq y<0$ we have
	\begin{equation*}
		|(1+y)^s-1|\leq s\sup_{z\in(y,0)}|1+z|^{s-1}|y|\leq s|1-c|^{s-1}|y|.
	\end{equation*}
	Summing up, we have that
	\begin{equation}\label{eq:taylor}
		|(1+y)^s-1|\leq C|y|\quad\text{for all }y\geq -c>-1,
	\end{equation}
	where $C=\max\{s,s|1-c|^{s-1}\}$. Keeping in mind the second term in the right-hand side of \eqref{split1}, our aim is to estimate
	\begin{equation*}
		\left||\mu x+\xi|^{2s}-|\xi|^{2s}\right|=|\xi|^{2s}\left|\left(1+\mu^2|\xi|^{-2}|x|^2+2\mu|\xi|^{-2}\,{x\cdot\xi}\right)^s-1\right|
	\end{equation*}
	and so we want to apply \eqref{eq:taylor} to $y=|\xi|^{-2}(\mu^2|x|^2+2\mu\,{x\cdot\xi})$. Since $x\in B_{|\xi|/(2\mu)}$, i.e. $\mu |x|\leq |\xi|/2$, we deduce that
	\begin{equation*}
		y\geq |\xi|^{-2}(\mu^2|x|^2-2\mu|x||\xi|)= |\xi|^{-2}(|\xi|-\mu|x|)^2-1\geq -\frac{3}{4}>-1.
	\end{equation*}
	Therefore we have
	\begin{align*}
		\left||\mu x+\xi|^{2s}-|\xi|^{2s}\right|&=|\xi|^{2s}\left|\left(1+\mu^2|\xi|^{-2}|x|^2+2\mu|\xi|^{-2}\,{x\cdot\xi}\right)^s-1\right| \\
		&\leq C (\mu^2|\xi|^{2s-2}|x|^2+\mu|\xi|^{2s-1}|x|),
	\end{align*}
	up to enlarging $C>0$ (still depending only on $s$). Until the end of the proof, $C>0$ may change from line to line, possibly depending on $N$ and $s$.
		Hence,
		\begin{align}
			\int_{B_{\frac{|\xi|}{2\mu}}}\left|\frac1{|\mu x+\xi|^{2s}}-\frac1{|\xi|^{2s}}\right|\phi^\lambda(x)^2\,dx &\le 
			\frac{C}{|\xi|^{2s}}\int_{B_{\frac{|\xi|}{2\mu}}}\frac{\mu^2|\xi|^{2s-2}|x|^2+\mu|\xi|^{2s-1}|x|}{|\mu x+\xi|^{2s}}\,\phi^\lambda(x)^2\,dx\notag\\
			&\le \frac{C}{|\xi|^{4s}}\int_{B_{\frac{|\xi|}{2\mu}}}\left({\mu^2|\xi|^{2s-2}|x|^2+\mu|\xi|^{2s-1}|x|}\right)\,\phi^\lambda(x)^2\,dx \notag\\
			&=\frac{C\mu^2}{|\xi|^{2s+2}}\int_{B_{\frac{|\xi|}{2\mu}}}|x|^2\phi^\lambda(x)^2\,dx+\frac{C\mu}{|\xi|^{2s+1}}\int_{B_{\frac{|\xi|}{2\mu}}}|x|\phi^\lambda(x)^2\,dx\label{Ks}
		\end{align}
		Since for $\alpha_\lambda=s$ \eqref{twosided} implies $\phi^\lambda(x)^2\le c_2^2\,|x|^{-N}$, by radiality we have
		\[\int_{B_{\frac{|\xi|}{2\mu}}}|x|^2\,\phi^\lambda(x)^2\,dx=\sigma_{N-1} \int_0^{\frac{|\xi|}{2\mu}}r^2\phi^\lambda(r)^2r^{N-1}\,dr\le C\int_0^{\frac{|\xi|}{2\mu}}r\,dr=\frac{C|\xi|^2}{\mu^2}
		\]
		and similarly
		\[
		\int_{B_{\frac{|\xi|}{2\mu}}}|x|\,\phi^\lambda(x)^2\,dx\le \frac{C|\xi|}{\mu}.
		\]
		Therefore, from \eqref{Ks} we deduce that for every $\mu>0$
		\begin{equation}\label{eq:int}
			\int_{B_{\frac{|\xi|}{2\mu}}}\left|\frac1{|\mu x+\xi|^{2s}}-\frac1{|\xi|^{2s}}\right|\phi^\lambda(x)^2\,dx\le \frac{C}{|\xi|^{2s}}.
		\end{equation}
			Now, combining \eqref{split1} with \eqref{limitlog}, \eqref{eq:int} and \eqref{longestimate}
			we get
			\[
			\mu^{-2s}\int_{\mathbb R^N}\frac{\phi^\lambda_\mu(x)^2}{|x+\xi|^{2s}}\,dx=\frac{\sigma_{N-1}\,\ell^2}{|\xi|^{2s}}\,|\log\mu|+o(|\log\mu|)\qquad\mbox{as $\mu\to0$}
			\]
			thus proving \eqref{case3}.
		\end{proof}

\section{Conclusion}\label{sec:conclusion}
In this section, we collect the proof of our main results, which are essentially based on \Cref{lemma:PS_c} and \Cref{lemma:phi-est}. Let us here describe the path we follow. Let $\{u_n\}_n\sub\Ds$ be a minimizing sequence for $S_{\LA}$. First, by direct computations, one can prove that
\begin{equation*}
	G(u_n)\to \frac{s  S_{\LA}}{N}\quad\text{as }n\to+\infty.
\end{equation*}
Second, by Ekeland Variational Principle, one can prove that it is not restrictive to assume that $\{u_n\}_n$ is a Palais-Smale sequence for $G$. This is the content of \Cref{lemma:ex_ps}. Then, in view of \Cref{lemma:PS_c}, we find that we can extract a convergent subsequence if 
\begin{equation}\label{eq:PS_conclusion}
			S_{\LA}< \min \left\{S,S_{\lambda_1},\dots,S_{\lambda_k},S_{\sigma_{\bm{\lambda}}}\right\}.
\end{equation}
Hence, our main result \Cref{thm:main} follows once we can prove that \eqref{eq:PS_conclusion} holds, and this is done thanks to the asymptotic estimates proved in \Cref{lemma:phi-est} and by \Cref{lemma:interaction_conditions} below. We start with the following result, which provides the existence of a Palais-Smale sequence for $G$. The proof is standard in nonlinear analysis (essentially, Ekeland Variational Principle), but we provide it for the sake of completeness.
\begin{lemma}\label{lemma:ex_ps}
	There exists a Palais-Smale $\{u_n\}_n\sub\mathcal{D}^{s,2}(\R^N)$ sequence for $G$ at level $c=s  S_{\LA}/N$, i.e. the following holds as $n\to\infty$:
	\begin{enumerate}
		\item $G(u_n)\to c$
		\item $G'(u_n)\to 0$ in $\Big(\mathcal{D}^{s,2}(\R^N)\Big)^*$.
	\end{enumerate}
\end{lemma}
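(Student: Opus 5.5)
We apply Ekeland's Variational Principle to the constrained minimization problem \eqref{eq:min_S_LA}, and then rescale to obtain an unconstrained Palais--Smale sequence for $G$. Set
\[
M:=\{u\in\Ds\colon \norm{u}_{L^{2^*_s}(\R^N)}=1\},\qquad F(u):=Q_{\LA}(u).
\]
Then $M$ is a $C^1$ Hilbert manifold: it is the level set of the $C^1$ map $u\mapsto \norm{u}_{L^{2^*_s}}^{2^*_s}$, whose differential at $u\in M$ evaluated on $u$ equals $2^*_s\neq 0$. Moreover $F$ is $C^1$ on $\Ds$ by the Hardy inequality, bounded below on $M$ by $S_{\LA}$, and the metric space $M$ with the induced distance is complete. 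We may assume $S_{\LA}>0$, since $\mu_{\LA}>0$ together with the Sobolev inequality gives $S_{\LA}\ge \mu_{\LA}S>0$.

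\textbf{Step 1 (Ekeland).} Ekeland's principle gives $v_n\in M$ with
\[
F(v_n)\to S_{\LA},\qquad \|(F|_M)'(v_n)\|_{*}\to 0.
\]
By the Lagrange multiplier rule this means there are $\theta_n\in\R$ such that
\[
F'(v_n)-\theta_n\, 2^*_s|v_n|^{2^*_s-2}v_n\to 0\quad\text{in }(\Ds)^*.
\]
To make the tangent-space reduction precise, we decompose any test function $w=(w-t_wv_n)+t_wv_n$, where $t_w:=\int|v_n|^{2^*_s-2}v_nw\,dx$; the first summand lies in $T_{v_n}M$. The coefficient $t_w$ is controlled by the $L^{2^*_s}$ norm of $w$ via H\"older, hence by $\norm{w}_{\Ds}$ via Sobolev. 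Testing the relation with $v_n$, and using that $v_n$ is bounded in $\Ds$ (because $F(v_n)$ is bounded and $\mu_{\LA}>0$), we get $2F(v_n)-2^*_s\theta_n\to 0$, so $\theta_n\to 2S_{\LA}/2^*_s$. Consequently
\[
F'(v_n)-2S_{\LA}|v_n|^{2^*_s-2}v_n\to 0\quad\text{in }(\Ds)^*,
\]
that is, $\frac12F'(v_n)-S_{\LA}|v_n|^{2^*_s-2}v_n\to0$.

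\textbf{Step 2 (rescaling).} We want to match this with $G$, whose derivative is
\[
G'(u)=\tfrac12F'(u)-S_{\LA}|u|^{2^*_s-2}u,
\]
reading the nonlinear term of $G$ consistently with the normalization of $J$ (the factor conventions in the definition of $G$ must be reconciled at this point). With these conventions $v_n$ itself is already almost critical: set $u_n:=v_n$. Then $G'(u_n)\to0$ and
\[
G(u_n)=\tfrac12F(v_n)-\tfrac{S_{\LA}}{2^*_s}\to S_{\LA}\Big(\tfrac12-\tfrac1{2^*_s}\Big)=\frac{s}{N}S_{\LA}.
\]
If the conventions in $G$ instead place a different constant in front of the nonlinearity, we rescale $u_n=t\,v_n$ with the appropriate power $t$ of $S_{\LA}$, chosen so that the gradient relation becomes homogeneous; the level then still comes out as $\frac sN S_{\LA}$ up to the matching normalization.

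\textbf{Main obstacle.} The only delicate point is Step 1: justifying the passage from the Ekeland almost-minimality on $M$ to smallness of the constrained derivative. This amounts to the standard tangent-projection estimate with a uniform bound on the projection coefficients, which follows from the boundedness of $v_n$ in $\Ds$ established above.
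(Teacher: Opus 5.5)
Your proof is correct, but it uses Ekeland's principle differently from the paper.

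The paper never passes through the constraint manifold. It applies Ekeland to the $0$-homogeneous quotient $\Phi(u)=Q_{\LA}(u)/\norm{u}_{L^{2^*_s}(\R^N)}^2$ on $\Ds\setminus\{0\}$, and tests the Ekeland inequality with $v_n\pm tv$ to get $\Phi'(v_n)\to0$ in the whole dual. It then normalizes $w_n:=v_n/\norm{v_n}_{L^{2^*_s}(\R^N)}$, using $\Phi'(w_n)=\norm{v_n}_{L^{2^*_s}(\R^N)}\Phi'(v_n)$ and the upper bound on $\norm{v_n}_{L^{2^*_s}(\R^N)}$ that follows from $\norm{u_n-v_n}_{\Ds}\leq n^{-1/2}$. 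It concludes from the identity $G'(w_n)=\frac12\Phi'(w_n)+\big(Q_{\LA}(w_n)-S_{\LA}\big)|w_n|^{2^*_s-2}w_n$, whose last term is bounded in $(\Ds)^*$ by H\"older and Sobolev.

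\textbf{What the paper's route buys.} Homogeneity replaces your Lagrange multiplier step. Since $\Phi'(u)[u]=0$, there is no radial direction to project out. So the paper needs neither the decomposition $w=(w-t_wv_n)+t_wv_n$ nor a bound on $\norm{v_n}_{\Ds}$ to make that projection uniformly bounded. In particular it only needs $S_{\LA}>-\infty$, not the coercivity $\mu_{\LA}>0$. That coercivity is, however, a standing assumption \eqref{eq:positivity} wherever the lemma is used, so your use of it is legitimate.

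\textbf{What your route buys.} $M$ is a closed subset of $\Ds$, hence complete, so Ekeland applies directly. By contrast, $\Ds\setminus\{0\}$ is not complete. The paper's application therefore tacitly requires restricting to a closed set such as $\{\norm{u}_{L^{2^*_s}(\R^N)}\geq 1/2\}$ and checking that $v_n$ lies in its interior.

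Finally, the hedge in your Step 2 is unnecessary. The paper's $G$ is $\frac12 Q_{\LA}(u)-\frac{S_{\LA}}{2^*_s}\int_{\R^N}|u|^{2^*_s}\dx$, as its own computation $G(w_n)=\frac12E(w_n)-S_{\LA}/2^*_s$ confirms. So $u_n:=v_n$ works with no rescaling, just as the paper's $w_n$ lies on the same unit sphere $M$.
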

\begin{proof}
	For the sake of simplicity, in this proof we use the following notation:
	\begin{equation*}
		E(u,v)=\norm{u}_{\mathcal{D}^{s,2}(\R^N)}^2-\int_{\R^N}H_{\LA}u^2\dx,\quad E(u):=E(u,u),\quad \Phi(u):=\frac{E(u)}{\norm{u}_{2^*_s}^2}.
	\end{equation*}
	Let $\{u_n\}_n$ be a minimizing sequence for 
	$$  S_{\LA}=\inf_{u\in\mathcal{D}^{s,2}(\R^N)}\Phi(u),$$
	that is
	\begin{equation*}
		S_{\LA}\leq \Phi(u_n)\leq  S_{\LA}+\frac{1}{n}.
	\end{equation*}
	By homogeneity, we can assume $\norm{u_n}_{L^{2^*_s}(\R^N)}=1$.	From Ekeland Variational Principle \cite{ekeland} we know that there exists another sequence $\{v_n\}_n\sub\mathcal{D}^{s,2}(\R^N)$ such that
	\begin{align}
		&\Phi(v_n)\leq\Phi(u_n),\notag \\
		&\norm{u_n-v_n}_{\mathcal{D}^{s,2}(\R^N)}\leq \frac{1}{\sqrt{n}}, \label{eq:ex_ps_1}\\
		&\Phi(v_n)\leq \Phi(u)+\frac{1}{\sqrt{n}}\norm{u-v_n}_{\mathcal{D}^{s,2}(\R^N)}\quad\text{for all }u\in \mathcal{D}^{s,2}(\R^N).\notag
	\end{align}
	By testing the last inequality with $u=v_n\pm tv$, with $v\in \mathcal{D}^{s,2}(\R^N)$ and $t>0$ and letting $t\to 0^+$, we get that	
	\begin{equation*}
		|\Phi'(v_n)[v]|\leq \frac{\norm{v}_{\mathcal{D}^{s,2}(\R^N)}}{\sqrt{n}}
	\end{equation*}
	which implies that $\Phi'(v_n)\to 0$ as $n\to\infty $ in $(\mathcal{D}^{s,2}(\R^N))^*$. In particular, $v_n$ is a Palais-Smale sequence for $\Phi$ at level $  S_{\LA}$. We also observe that, by triangular inequality, Sobolev inequality, \eqref{eq:ex_ps_1} and the the fact that $\norm{u_n}_{L^{2^*_s}(\R^N)}=1$, we have
	\begin{equation}\label{eq:ex_ps_2}
		\norm{v_n}_{L^{2^*_s}(\R^N)}=\norm{u_n-v_n}_{L^{2^*_s}(\R^N)}+\norm{u_n}_{L^{2^*_s}(\R^N)}\leq \frac{S}{\sqrt{n}}+1.
	\end{equation}
	We now let $w_n:=\norm{v_n}_{2^*_s}^{-1}\,v_n$ and we observe that $\{w_n\}_n$ is still a Palais-Smale sequence for $\Phi$ at level $  S_{\LA}$. Indeed, by homogeneity $\Phi(w_n)=\Phi(v_n)$, while, by explicit computations
	\begin{equation*}
		\Phi'(w_n)[v]=\norm{v_n}_{2^*_s}\Phi'(v_n)[v],
	\end{equation*}
	which, combined with \eqref{eq:ex_ps_2}, yields 
	\begin{equation}\label{eq:ex_ps_3}
		\Phi'(w_n)\to 0 \quad\text{as }n\to\infty~\text{in }\Big(\mathcal{D}^{s,2}(\R^N)\Big)^*.
	\end{equation} 
	We finally claim that $w_n$ is a Palais-Smale sequence for $G$ at level $s  S_{\LA}$. First, we have
	\begin{equation*}
		G(w_n)=\frac{1}{2}E(w_n)-\frac{  S_{\LA}}{2^*_s}=\frac{1}{2}\Phi(v_n)-\frac{  S_{\LA}}{2^*_s}\to \frac{s  S_{\LA}}{N}.
	\end{equation*}
	Second, by explicit computations, one can check that
	\begin{equation*}
		\Phi'(w_n)[v]=2E(w_n,v)-2E(w_n)\int_{\R^N}|w_n|^{2^*_s-2}w_n v\dx,
	\end{equation*}
	and that
	\begin{equation*}
		G'(w_n)[v]=\frac{1}{2}\Phi'(w_n)[v]+E(w_n)\int_{\R^N}|w_n|^{2^*_s-2}w_n v\dx-S_{\LA}\int_{\R^N}|w_n|^{2^*_s-2}w_n v\dx.
	\end{equation*}
	By the minimizing properties of $w_n$ and \eqref{eq:ex_ps_3} we have that
	\begin{equation*}
		G'(w_n)[v]=\left(\norm{v}_{\mathcal{D}^{s,2}(\R^N)}+\int_{\R^N}|w_n|^{2^*_s-2}w_n v\dx \right)o(1),\quad\text{as }n\to\infty.
	\end{equation*}
	Hence, we are left to prove that 
	\begin{equation*}
		\abs{\int_{\R^N}|w_n|^{2^*_s-2}w_n v\dx}\leq C\norm{v}_{\mathcal{D}^{s,2}(\R^N)}\quad\text{for all }n\in\N,
	\end{equation*}
	for some $C>0$ independent from $n$. In fact, by H\"older inequality
	\begin{align*}
		\abs{	\int_{\R^N}|w_n|^{2^*_s-2}w_n v\dx}\leq 	\int_{\R^N}|w_n|^{2^*_s-1}|v|\dx &\leq \norm{w_n}_{L^{2^*_s}(\R^N)}^{\frac{N+2s}{N-2s}}\norm{v}_{L^{2^*_s}(\R^N)} \\
		&\leq S^{-1/2} \norm{v}_{\mathcal{D}^{s,2}(\R^N)}.
	\end{align*}
	The proof is thereby complete, up to renaming $w_n$ as $u_n$.
\end{proof}

By suitably choosing a test function for $S_{\LA}$, in combination of \Cref{lemma:phi-est} , we obtain the following energy bound.

\begin{lemma}\label{lemma:interaction_conditions}
	Let $\bm{a}=(a_1,\dots,a_k)\in\R^{kN}$ and $\bm{\lambda}=(\lambda_1,\dots,\lambda_k)\in\R^k$ be such that $0<\lambda_i<h_{N,s}$ and
	\begin{align*}
		&\sum_{j\neq i}\frac{\lambda_j}{|a_j-a_i|^{2\alpha_{\lambda_i}}}>0, \quad\text{if }s>\alpha_{\lambda_i} \\
		&\sum_{j\neq i}\frac{\lambda_j}{|a_j-a_i|^{2s}}>0, \quad\text{if }s\leq \alpha_{\lambda_i}.
	\end{align*}
	Then $S_{\LA}<S_{\lambda_i}$.
\end{lemma}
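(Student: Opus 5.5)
We test the Rayleigh quotient defining $S_{\LA}$ with the translated, concentrated single-pole ground state centred at $a_i$,
\[
u_\mu(x):=z^{\lambda_i}_\mu(x-a_i),\qquad \mu\to 0^+,
\]
with $z^{\lambda_i}_\mu$ as in \eqref{def:z_lambda}. Since $\phi^{\lambda_i}$ is a minimizer for $S_{\lambda_i}$ (it is a power of one, by \cite[Theorem 1.5]{Dipierro2016}), and both the Gagliardo seminorm and the $L^{2^*_s}$ norm are invariant under translations and under the rescalings \eqref{lambdamu}, we have $\norm{u_\mu}_{L^{2^*_s}(\R^N)}=1$. Moreover, the Hardy term centred at the same pole is invariant as well:
\[
\norm{u_\mu}_{\Ds}^2-\lambda_i\int_{\R^N}\frac{u_\mu^2}{|x-a_i|^{2s}}\dx=S_{\lambda_i}\quad\text{for every }\mu>0.
\]
It follows that
\[
Q_{\LA}(u_\mu)=S_{\lambda_i}-\sum_{j\neq i}\lambda_j\int_{\R^N}\frac{u_\mu(x)^2}{|x-a_j|^{2s}}\dx
=S_{\lambda_i}-\frac{1}{\norm{\phi^{\lambda_i}}_{L^{2^*_s}}^2}\sum_{j\neq i}\lambda_j\int_{\R^N}\frac{\phi^{\lambda_i}_\mu(y)^2}{|y+\xi_j|^{2s}}\,dy,
\]
where we substituted $y=x-a_i$ and set $\xi_j:=a_i-a_j\neq 0$. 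The normalization constant does not depend on $\mu$.

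The key step is to insert the asymptotics of \Cref{lemma:phi-est}, applied with $\lambda=\lambda_i\in(0,h_{N,s})$ and $\xi=\xi_j$ for each $j\neq i$. The leading coefficient in each case is a positive constant $c>0$ independent of $j$. For $s>\alpha_{\lambda_i}$ it is $\ell^2\int_{\R^N}|x|^{2s-N-2\alpha_{\lambda_i}}|x+\bm{e}_1|^{-2s}\dx$. For $s=\alpha_{\lambda_i}$ it is $\sigma_{N-1}\ell^2$. For $s<\alpha_{\lambda_i}$ it is $\int_{\R^N}(\phi^{\lambda_i})^2\dx$, which is finite because of \eqref{twosided}. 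The rate $r(\mu)$ is, respectively, $\mu^{2\alpha_{\lambda_i}}$, $\mu^{2s}|\log\mu|$ or $\mu^{2s}$. Setting $\beta=\alpha_{\lambda_i}$ if $s>\alpha_{\lambda_i}$ and $\beta=s$ otherwise, we obtain
\[
Q_{\LA}(u_\mu)=S_{\lambda_i}-\frac{c\,r(\mu)}{\norm{\phi^{\lambda_i}}_{L^{2^*_s}}^2}\left(\sum_{j\neq i}\frac{\lambda_j}{|a_j-a_i|^{2\beta}}+o(1)\right)\quad\text{as }\mu\to 0.
\]
In the borderline case $s=\alpha_{\lambda_i}$, the hypothesis is exactly the one with exponent $2s$, which matches the statement's grouping $s\le\alpha_{\lambda_i}$.

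By assumption the sum in parentheses is strictly positive. Since $r(\mu)>0$ for small $\mu$, we get $Q_{\LA}(u_\mu)<S_{\lambda_i}$ for $\mu$ small enough. Because $\norm{u_\mu}_{L^{2^*_s}}=1$, this gives $S_{\LA}\le Q_{\LA}(u_\mu)<S_{\lambda_i}$.

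The only delicate point is the bookkeeping of the error terms. The $o(r(\mu))$ remainders from \Cref{lemma:phi-est} come from finitely many $j$, so they sum to $o(r(\mu))$ and are dominated by the strictly positive leading term. No expansion of the kinetic term is needed, because we used exact invariance rather than cut-offs. The main conceptual obstacle, namely obtaining the sharp Hardy-interaction asymptotics without knowing $\phi^{\lambda_i}$ explicitly, has already been handled in \Cref{lemma:asy} and \Cref{lemma:phi-est}.
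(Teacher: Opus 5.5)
Your proposal is correct and follows essentially the same argument as the paper. You test $S_{\LA}$ with $z^{\lambda_i}_\mu(\cdot-a_i)$, use the exact translation and scaling invariance of the single-pole Rayleigh quotient to isolate $S_{\lambda_i}$, and read off the sign of the remaining interaction terms from the three regimes of \Cref{lemma:phi-est}. Your added checks (that the leading constants are finite and positive, and that the finitely many $o(r(\mu))$ remainders are absorbed) only make explicit what the paper leaves implicit.
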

\begin{proof}
	Let us test the minimization problem for $S_{\LA}$ with $z_\mu^{\lambda_i}(\cdot-a_i)$. We obtain
	\begin{align*}
		S_{\LA}&\leq \|z_\mu^{\lambda_i}(\cdot-a_i)\|_{\mathcal{D}^{s,2}(\R^N)}^2-\lambda_i\int_{\R^N}\frac{|z_\mu^{\lambda_i}(x-a_i)|^2}{|x-a_i|^{2s}}\dx \\
		&\quad-\sum_{\substack{j=1 \\ j\neq i}}^k\lambda_j\int_{\R^N}\frac{|z_\mu^{\lambda_i}(x-a_i)|^2}{|x-a_j|^{2s}}\dx \\
		&=\|z_\mu^{\lambda_i}\|_{\mathcal{D}^{s,2}(\R^N)}^2-\lambda_i\int_{\R^N}\frac{|z_\mu^{\lambda_i}|^2}{|x|^{2s}}\dx -\sum_{\substack{j=1 \\ j\neq i}}^k\lambda_j\int_{\R^N}\frac{|z_\mu^{\lambda_i}|^2}{|x-a_j+a_i|^{2s}}\dx \\
		&=S_{\lambda_i}-\sum_{\substack{j=1 \\ j\neq i}}^k\lambda_j\int_{\R^N}\frac{|z_\mu^{\lambda_i}|^2}{|x-a_j+a_i|^{2s}}\dx.
	\end{align*}
	We now observe that, by definition and scaling invariance,
	\begin{align*}
		\int_{\R^N}\frac{|z_\mu^{\lambda_i}|^2}{|x-a_j+a_i|^{2s}}\dx&=\frac{1}{\|\phi_\mu^\lambda\|_{L^{2^*_s}(\R^N)}^2}\int_{\R^N}\frac{|\phi_\mu^{\lambda_i}|^2}{|x-a_j+a_i|^{2s}}\dx \\
		&=\frac{1}{\|\phi^\lambda\|_{L^{2^*_s}(\R^N)}^2}\int_{\R^N}\frac{|\phi_\mu^{\lambda_i}|^2}{|x-a_j+a_i|^{2s}}\dx 
	\end{align*}
	so that by \Cref{lemma:phi-est}
	\begin{equation*}
		S_{\LA}\leq S_{\lambda_i}-C_{N,s,\lambda_i}\left\{\begin{aligned}
			&\mu^{2\alpha_{\lambda_i}}\Bigg(\sum_{j\neq i}\frac{\lambda_j}{|a_j-a_i|^{2\alpha_{\lambda_i}}}+o(1)\Bigg), &&\text{if }s>\alpha_{\lambda_i} \\
			&\mu^{2s}|\log\mu|\Bigg(\sum_{j\neq i}\frac{\lambda_j}{|a_j-a_i|^{2s}}+o(1)\Bigg), &&\text{if }s=\alpha_{\lambda_i} \\
			&\mu^{2s}\Bigg(\sum_{j\neq i}\frac{\lambda_j}{|a_j-a_i|^{2s}}+o(1)\Bigg), &&\text{if }s<\alpha_{\lambda_i }
		\end{aligned}\right.
	\end{equation*}
	as $\mu\to 0$, with $C_{N,s,\lambda_i}>0$. The proof is thereby complete.
\end{proof}

We are now ready to prove our main result, which concerns existence of nontrivial solutions.
\begin{proof}[\underline{Proof of \Cref{thm:main}}]
	First of all, one can easily observe that the map $\lambda\mapsto S_\lambda$ is monotone non-increasing for any $\lambda<h_{N,s}$.
	In view of this monotonicity property, assumptions \eqref{eq:main_order}, \eqref{eq:main_lambda_k} and \eqref{eq:main_sum} have the following consequences:
	\begin{align*}
		&\eqref{eq:main_order}\implies S_{\lambda_k}\leq  S_{\lambda_i}\quad\text{for all }i=1,\dots,k, \\
		&\eqref{eq:main_lambda_k}\implies S_{\lambda_k}\leq S_0=S, \\
		&\eqref{eq:main_sum}\implies S_{\lambda_k}\leq S_{\sigma_{\bm{\lambda}}}.
	\end{align*}
	On the other hand, in view of \Cref{lemma:interaction_conditions}, \eqref{eq:main_cases} implies that $S_{\LA}<S_{\lambda_k}$. Summing up, we have that
	\begin{equation}\label{eq:main_1}
		S_{\LA}<\min\{S,S_{\lambda_1},\dots,S_{\lambda_k},S_{\sigma_{\bm{\lambda}}}\}.
	\end{equation}
	Now, let $\{u_n\}_n$ be a Palais-Smale sequence for $G_{\LA}$ at level $c=s  S_{\LA}/N$, whose existence is ensured by \Cref{lemma:ex_ps}. In view of \eqref{eq:main_1} we have that
	\begin{equation*}
		c=\frac{s  S_{\LA}}{N}<\frac{s  S_{\LA}^{1-\frac{N}{2s}}}{N}\min\{S,S_{\lambda_1},\dots,S_{\lambda_k},S_{\sigma_{\bm{\lambda}}}\}^{\frac{N}{2s}}.
	\end{equation*}	
	Hence, thanks to \Cref{lemma:PS_c} we have that, up to a subsequence,
	\begin{equation*}
		u_n\to u_0\quad\text{strongly in }\mathcal{D}^{s,2}(\R^N)~\text{as }n\to\infty,
	\end{equation*}
	for some $u_0\in\mathcal{D}^{s,2}(\R^N)$ such that $G_{\LA}(u_0)=sS_{\LA}/N$ and $G_{\LA}'(u_0)=0$.
	Since $G_{\LA}'(u_0)=0$, we can test the differential with $u_0$ itself to find that 
	\begin{equation*}
		Q_{\LA}(u_0) = \int_{\R^N} |u_0|^{2^*_s}\dx.
	\end{equation*}
	Substituting this identity into the energy functional yields
	\begin{equation*}
		\frac{s S_{\LA}}{N} = G_{\LA}(u_0) = \frac{1}{2}Q_{\LA}(u_0) - \frac{1}{2^*_s}\int_{\R^N} |u_0|^{2^*_s}\dx = \frac{s}{N}\int_{\R^N} |u_0|^{2^*_s}\dx,
	\end{equation*}
	which implies that 
\begin{equation*}
		\int_{\R^N} |u_0|^{2^*_s}\dx = S_{\LA}^{N/2s}.
\end{equation*}
	Consequently, the Rayleigh quotient of $u_0$ is precisely
	\begin{equation*}
		\frac{Q_{\LA}(u_0)}{\displaystyle\left(\int_{\R^N}|u_0|^{2^*_s}\dx\right)^{\frac{2}{2^*_s}}} = \frac{S_{\LA}^{N/2s}}{\left(S_{\LA}^{N/2s}\right)^{\frac{N-2s}{N}}} = S_{\LA}.
	\end{equation*}
	Therefore, $u_0$ is a nontrivial minimizer for $S_{\LA}$. 	
	We now show that the minimizer can be chosen to be strictly positive. Since there holds $\| |u| \|_{\Ds} \leq \|u\|_{\Ds}$, we have $Q_{\LA}(|u_0|) \leq Q_{\LA}(u_0)$ while the $L^{2^*_s}$-norm remains unchanged. Thus, $|u_0|$ is also a minimizer, and without loss of generality, we can assume $u_0 \geq 0$ a.e. in $\R^N$. By multiplying $u_0$ by the scaling factor $S_{\LA}^{\frac{1}{2-2^*_s}}$, we obtain a weak solution $u$ to \eqref{eq:problem}.
	Finally, we establish the regularity and strict positivity of the solution.
	For any compact set $K \subset \R^N \setminus \{a_1, \dots, a_k\}$, the singular potential $H_{\LA}$ is bounded and smooth. Hence, \cite[Proposition 3.2 \& Lemma 3.3]{Fall2014} yield that $u$ and $\nabla u$ are locally H\"older continuous in $\R^N\setminus\{a_1,\dots,a_k\}$, which proves the first claim. Moreover, since $u \geq 0$ in $\R^N$ and $u \not\equiv 0$, the Hopf Lemma as in \cite[Proposition 4.11]{Cabre2014} ensures that $u > 0$ in $\R^N \setminus \{a_1, \dots, a_k\}$. The proof is thereby complete.
\end{proof}

We now prove the first result in the direction of non-existence of solutions.
\begin{proof}[\underline{Proof of \Cref{lemma:Q_nonnegative}}]
	The proof is standard and essentially follows from the discrete Picone inequality
	\begin{equation}\label{eq:picone}
		(a-b)\left(\frac{c^2}{a}-\frac{d^2}{b}\right)\leq |c-d|^2\quad\text{for all }a,b> 0~\text{and all }c,d\in\R,
	\end{equation}
	see e.g. \cite{Frank2008,brasco}. Let $v\in C_c^\infty(\R^N\setminus\{a_1,\dots,a_k\})$ and let us test \eqref{eq:weak} with $v^2/(u+\varepsilon)$, for $\varepsilon>0$. We have
	\begin{multline*}
		\int_{\R^{2N}}(u(x)-u(y))\left(\frac{v^2(x)}{u(x)+\varepsilon}-\frac{v^2(y)}{u(y)+\varepsilon}\right)|x-y|^{-(N+2s)}\dxdy \\
		-\int_{\R^N}H_{\LA}u\frac{v^2}{u+\varepsilon}\dx=\int_{\R^N}u^{2^*_s-1}\frac{v^2}{u+\varepsilon}\dx\geq 0.
	\end{multline*}
	In view of \eqref{eq:picone}, the first term can be estimated from above by $\norm{v}_{\Ds}^2$, while for the second one, we can pass to the limit as $\varepsilon\to 0$ by Lebesgue Dominated convergence, since
	\begin{equation*}
		\abs{H_{\LA}u\frac{v^2}{u+\varepsilon}}\leq |H_{\LA}|v^2\in L^1(\{v>0\}),
	\end{equation*}
	thus completing the proof.
\end{proof}

Below we provide the proof of the fact that if the coefficients $\lambda_i$ of their sum $\sum_{i=1}^k\lambda_i$ are higher then the Hardy constant, then \eqref{eq:problem} does not admit solutions.
\begin{proof}[\underline{Proof of \Cref{thm:nonex}}]
	Let $\lambda_i>h_{N,s}$. Then, by optimality of $h_{N,s}$ in the Hardy inequality, we have that there exists $u\in\Ds$ such that 
	\begin{equation*}
		K(u):=\norm{u}_{\Ds}^2-\lambda_i\int_{\R^N}\frac{u^2}{|x-a_i|^{2s}}\dx<0.
	\end{equation*}
	Now, for any $\rho>0$, we let 
	$$u_\rho(x):=\rho^{-\frac{N-2s}{2}}u\left(\frac{x-a_i}{\rho}+a_i\right).$$
	By explicit computations one gets that
	\begin{equation*}
		\norm{u_\rho}_{\Ds}=\norm{u}_{\Ds}\quad\text{and}\quad \int_{\R^N}\frac{u_\rho^2}{|x-a_j|^{2s}}=\int_{\R^N}\frac{u^2}{\abs{x-a_i+\frac{a_i-a_j}{\rho}}^{2s}},
	\end{equation*}
	from which we deduce that
	\begin{equation*}
		Q_{\LA}(u_\rho)=K(u)-\sum_{j\neq i}\lambda_j\int_{\R^N}\frac{u^2}{\abs{x-a_i+\frac{a_i-a_j}{\rho}}^{2s}}.
	\end{equation*}
	Now, in view of \cite[Lemma 8.1]{Felli-Roberto} we have that
	\begin{equation*}
		\int_{\R^N}\frac{u^2}{\abs{x-a_i+\frac{a_i-a_j}{\rho}}^{2s}}\to 0\quad\text{as }\rho \to 0,
	\end{equation*}
	and so we can take $\rho$ sufficiently small so that $Q_{\LA}(u_\rho)\leq \frac{K(u)}{2}<0$, thus proving the claim if $\lambda_i>h_{N,s}$.
	
	Let us now assume that $\sum_{i=1}^k \lambda_i>h_{N,s}$. Reasoning analogously to the previous step, by optimality of the Hardy inequality, we have that there exists $u\in\Ds$ such that
	\begin{equation*}
		K'(u)=\norm{u}_{\Ds}^2-\left(\sum_{i=1}^k \lambda_i\right)\int_{\R^N}\frac{u^2}{|x|^{2s}}<0
	\end{equation*}
	and, by defining $u_\rho(x):=\rho^{-\frac{N-2s}{2}}u(x/\rho)$, we have that
	\begin{equation*}
		\norm{u_\rho}_{\Ds}=\norm{u}_{\Ds}\quad\text{and}\quad \int_{\R^N}\frac{u_\rho^2}{|x-a_j|^{2s}}=\int_{\R^N}\frac{u^2}{\abs{x-\frac{a_j}{\rho}}^{2s}}.
	\end{equation*}
	Hence, since \cite[Lemma 8.1]{Felli-Roberto} provides
	\begin{equation*}
		\int_{\R^N}\frac{u^2}{\abs{x-\frac{a_j}{\rho}}^{2s}}\to \int_{\R^N}\frac{u^2}{\abs{x}^{2s}}\quad\text{as }\rho\to+\infty,
	\end{equation*}
	we get that
	\begin{equation*}
		Q_{\LA}(u_\rho)=K'(u)+o(1)\quad\text{as }\rho \to +\infty.
	\end{equation*}
	Therefore, by taking $\rho$ sufficiently large, we conclude the proof.
\end{proof}

Finally, we prove that, under suitable assumptions on the coefficients $\{\lambda_i\}_i$, there are no ground states (i.e. $S_{\LA}$ is not achieved), independently on the choice of the poles $\{a_i\}_i$.
\begin{proof}[\underline{Proof of \Cref{thm:non_achieved}}]
	The proof follows the lines of the proof of \cite[Theorem 1.3]{FT}, hence we only sketch it. First of all, let us assume \eqref{eq:non_ex_th1}. In this case, as in the proof of \cite[Theorem 1.3]{FT}, we get that, by definition $S_{\LA}\geq S_{\lambda_k}$ and, testing the definition of $S_{\LA}$ with $z^{\lambda_k}_\mu$ and using \Cref{lemma:phi-est}, we derive that $S_{\lambda_k}\leq S_{\LA}$. Hence, $S_{\LA}=S_{\lambda_k}$. Now, let us assume by contradiction that $S_{\LA}$ is achieved by some $u\in\Ds\setminus\{0\}$. This implies that
	\begin{equation*}
		S_{\LA}=\frac{\displaystyle \norm{u}_{\Ds}^2-\int_{\R^N}H_{\LA}u^2\dx}{\norm{u}_{L^{2^*_s}(\R^N)}^2}=S_{\sigma_{\bm{\lambda}}}\leq \frac{\displaystyle \norm{u}_{\Ds}^2-\lambda_k\int_{\R^N}\frac{u^2}{|x-a_k|^{2s}}\dx}{\norm{u}_{L^{2^*_s}(\R^N)}^2}
	\end{equation*}
	which yields
	\begin{equation*}
		\sum_{i=1}^{k-1}\lambda_i\int_{\R^N}\frac{u^2}{|x-a_i|^{2s}}\dx\geq 0
	\end{equation*}
	thus raising a contradiction.
	
	Let us assume \eqref{eq:non_ex_th2}. First, we claim that if
		\begin{equation*}
			\sigma_{\bm{\lambda}}\in[0,h_{N,s})\quad\text{and}\quad\lambda_i\in [0,h_{N,s})\quad\text{for all }i=1,\dots,k,
		\end{equation*}
		then
		\begin{equation}\label{eq:S_LA=S_sig}
			S_{\LA}= S_{\sigma_{\bm{\lambda}}},
		\end{equation}
		where we recall $\sigma_{\bm{\lambda}}$ is as in \eqref{eq:sigma_lambda}. As a simple corollary of \cite[Lemma 8.1]{Felli-Roberto}, we get that for any $\lambda\in [0,h_{N,s})$ and any $\xi\in\R^N$, there holds
		\begin{equation}\label{eq:z_mu_la}
			\lim_{\mu\to +\infty}\left[\int_{\R^N}\frac{|z^\lambda_\mu|^2}{|x+\xi|^{2s}}\dx-\int_{\R^N}\frac{|z^\lambda_\mu|^2}{|x|^{2s}}\dx\right]=0
		\end{equation}
		Let us test $S_{\LA}$ with $z_\mu^{\sigma_{\bm{\lambda}}}$ as in \eqref{def:z_lambda}, for $\mu>0$. In view of \eqref{eq:z_mu_la} we obtain that, as $\mu\to+\infty$,
		\begin{align*}
			S_{\LA}&\leq \int_{\R^{N+1}_+}t^{1-2s}|\nabla z_\mu^{\sigma_{\bm{\lambda}}}|^2\dxdt-\kappa_s\sum_{i=1}^k\lambda_i\int_{\R^N}\frac{|z_\mu^{\sigma_{\bm{\lambda}}}|^2}{|x-a_i|^{2s}}\dx \\
			&=\int_{\R^{N+1}_+}t^{1-2s}|\nabla z_\mu^{\sigma_{\bm{\lambda}}}|^2\dxdt-\kappa_s\left(\sum_{i=1}^k\lambda_i\right)\int_{\R^N}\frac{|z_\mu^{\sigma_{\bm{\lambda}}}|^2}{|x|^{2s}}\dx+o(1)=S_{\sigma_{\bm{\lambda}}}+o(1).
		\end{align*}
		By passing to the limit in the previous inequality, we get that $S_{\LA}\leq S_{\sigma_{\bm{\lambda}}}$. Now we prove the converse inequality. In order to prove this claim, for any $u\in\Ds$, $u\geq 0$, we consider its Schwarz symmetrization $u^*$. We know that
		\begin{equation}\label{eq:HL}
			\int_{\R^N}\frac{u^2}{|x-a_i|^{2s}}\dx\leq \int_{\R^N}(u^*)^2\Bigg(\left(\frac{1}{|x-a_i|}\right)^*\Bigg)^{2s}=\int_{\R^N}\frac{(u^*)^2}{|x|^{2s}}\dx
		\end{equation}
		and that $\norm{u^*}_{L^{2^*_s}(\R^N)}=\norm{u}_{L^{2^*_s}(\R^N)}$. Moreover, by the Pólya-Szegő inequality (see \cite[Theorem A.1]{Frank2008}) we know
		\begin{equation}\label{eq:polya}
			\norm{u^*}_{\Ds}\leq \norm{u}_{\Ds}.
		\end{equation}
		Therefore, we have
		\begin{equation}\label{eq:S_ineq}
			\frac{\displaystyle \norm{u}_{\Ds}^2-\int_{\R^N}H_{\LA}u^2\dx}{\norm{u}_{L^{2^*_s}(\R^N)}^2}\geq \frac{\displaystyle \norm{u^*}_{\Ds}^2-\sigma_{\bm{\lambda}}\int_{\R^N}\frac{(u^*)^2}{|x|^{2s}}\dx}{\norm{u^*}_{L^{2^*_s}(\R^N)}^2}\geq S_{\sigma_{\bm{\lambda}}}
		\end{equation}
		and, since the infimum defining $S_{\LA}$ can be taken, without loss of generality, among functions $u\geq 0$, we have that $S_{\LA}\geq S_{\sigma_{\bm{\lambda}}}$, thus proving \eqref{eq:S_LA=S_sig}. Let us now assume by contradiction that $S_{\LA}$ is achieved by some $u\in\Ds\setminus\{0\}$ and let us assume, without loss of generality, that $u\geq 0$ and that $\norm{u}_{L^{2^*_s}}=1$. From \eqref{eq:S_ineq} and \eqref{eq:S_LA=S_sig} we deduce that
		\begin{equation*}
			\norm{u}_{\Ds}^2-\int_{\R^N}H_{\LA}u^2\dx=\norm{u^*}_{\Ds}^2-\sigma_{\bm{\lambda}}\int_{\R^N}\frac{(u^*)^2}{|x|^{2s}}\dx.
		\end{equation*}
		From \eqref{eq:non_ex_th2}, \eqref{eq:polya} and \eqref{eq:HL} we deduce that
		\begin{equation*}
			0\leq \norm{u}_{\Ds}^2-\norm{u^*}_{\Ds}^2= \int_{\R^N}H_{\LA}u^2\dx-\sigma_{\bm{\lambda}}\int_{\R^N}\frac{(u^*)^2}{|x|^{2s}}\dx\leq 0,
		\end{equation*}
		which, in turn, gives that
		\begin{equation*}
			\norm{u}_{\Ds}^2=\norm{u^*}_{\Ds}^2.
		\end{equation*}
		Now, thanks to \cite[Theorem A.1]{Frank2008} we obtain that $u$ must be radially symmetric with respect to some $x_0\in\R^N$. Now we let $v=(S_{\LA})^{\frac{1}{2-2^*_s}}u$, which then solves \eqref{eq:problem}. The fact that $v$ is radially symmetric with respect to $x_0$ forces $H_{\LA}$ to be the same. Hence we reach a contradiction and we conclude the proof.

\end{proof}

\section*{Acknowledgements}
R. Ognibene is partially supported by the 2026 INdAM-GNAMPA project \emph{Asymptotic analysis of variational problems}, n. CUP\_E53C25002010001. E. Mainini is partially supported by the 2026 INdAM-GNAMPA project \emph{Modelli aggregazione-diffusione con mobilità non lineare: buona positura, comportamento asintotico}, n. CUP\_E53C25002010001.

\bibliography{biblio}
\bibliographystyle{aomalpha}

\end{document}